\newtheorem{theorem}{Theorem}[section]
\newtheorem{proposition}[theorem]{Proposition}
\newtheorem{lemma}[theorem]{Lemma}
\theoremstyle{definition}
\newtheorem{definition}[theorem]{Definition}
\theoremstyle{remark}
\newtheorem{remark}[theorem]{Remark}
\numberwithin{equation}{section} 
\def\b{\mathfrak{b}}
\def\C{\mathbb{C}}
\def\R{\mathbb{R}}
\def\Rn{{\mathbb{R}^n}}
\def\M{\mathcal{M}}
\def\N{\mathbb{N}}
\def\S{\mathbb{S}}
\def\BMO{\operatorname{BMO}}
\def\VMO{\operatorname{VMO}}
\def\CMO{\operatorname{CMO}}
\def\MO{\operatorname{MO}}
\def\sgn{\operatorname{sgn}}
\def\sup{\operatornamewithlimits{sup}}
\def\inf{\operatornamewithlimits{inf}}
\def\supp{\operatornamewithlimits{supp}}
\begin{document}

\author{Mingming Cao}
\address{Mingming Cao\\
Instituto de Ciencias Matem\'aticas CSIC-UAM-UC3M-UCM\\
Consejo Superior de Investigaciones Cient{\'\i}ficas\\
C/ Nicol\'as Cabrera, 13-15\\
E-28049 Madrid, Spain} \email{mingming.cao@icmat.es}

\author{K\^{o}z\^{o} Yabuta}
\address{K\^{o}z\^{o} Yabuta\\
Research Center for Mathematics and Data Science\\ 
Kwansei Gakuin University\\ 
Gakuen 2-1, Sanda 669-1337\\ 
Japan.} \email{kyabuta3@kwansei.ac.jp} 

\thanks{The first author acknowledges financial support from the Spanish Ministry of Science and Innovation, through the ``Severo Ochoa Programme for Centres of Excellence in R\&D'' (SEV-2015-0554) and from the Spanish National Research Council, through the ``Ayuda extraordinaria a Centros de Excelencia Severo Ochoa'' (20205CEX001). He is also supported by China Postdoctoral Science Foundation (2018M643280). }

\date{September 30, 2020}

\keywords{Neumann Laplacian, 
Riesz transforms, 
$\VMO$ spaces, 
Commutator,
Compactness}

\subjclass[2010]{42B35, 42B20, 42B25, 42B30}

\title[$\VMO$ spaces associated with Neumann Laplacian]
{$\VMO$ spaces associated with Neumann Laplacian}

\maketitle
\allowdisplaybreaks

\begin{abstract}
In this paper, we establish several different characterizations of the vanishing mean oscillation space associated with Neumann Laplacian $\Delta_N$, written $\VMO_{\Delta_N}(\Rn)$. We first describe it with the classical $\VMO(\Rn)$ and certain $\VMO$ on the half-spaces. Then we demonstrate that $\VMO_{\Delta_N}(\Rn)$ is actually $\BMO_{\Delta_N}(\Rn)$-closure of the space of the smooth functions with compact supports. Beyond that, it can be characterized in terms of compact commutators of Riesz transforms and fractional integral operators associated to the Neumann Laplacian. Additionally, by means of the functional analysis, we obtain the duality between certain $\VMO$ and the corresponding Hardy spaces on the half-spaces. Finally, we present an useful approximation for $\BMO$ functions on the space of homogeneous type, which can be applied to our argument and otherwhere. 
\end{abstract}

\section{Introduction} 
In 1970s, Coifman and Weiss \cite{CW} introduced a function space of vanishing mean oscillation, denoted by $\VMO(\Rn)$, which was defined by the closure in the $\BMO$ norm of the space of continuous functions with compact support. They then proved that the Hardy space $H^1(\Rn)$ is the dual of $\VMO(\Rn)$. A deeper study of $\VMO(\Rn)$ space was done by Uchiyama \cite{U}. He proved that the $\VMO(\Rn)$ space can be described by the limits of mean oscillation on cubes. Significantly, it was also given a characterization of $\VMO(\Rn)$ via the copmactness of the commutators of singular integrals. To be more specific, let $1<p<\infty$, $R_j$ be the $j$-th Riesz transform on $\Rn$, and $\mathcal{K}(X,Y)$ be the collection of compact operators from Banach space $X$ to Banach space $Y$. Then there holds that 
\begin{equation}\label{e:bK}
b \in \VMO(\Rn) \ \ \text{ if and only if } \quad [b, R_j] \in \mathcal{K}(L^p(\Rn),L^p(\Rn)). 
\end{equation}
Thus, the commutators behave better than just being bounded, which was obtained by Coifman, Rochberg and Weiss \cite{CRW}. We should remind that the $\VMO(\Rn)$ space throughout this article is different from Sarason's \cite{Sa} although the notation 
is the same. 

The $\VMO$ space and compact commutators have attracted one's attention among researchers in PDEs. The compact commutators were proved by Iwaniec and Sbordone \cite{IS} to be an effective tool in solving elliptic equations with $\VMO$ coefficients. Moreover, the smoothing effect of commutators owns other important applications, for example, the compensated compactness \cite{CLMS} and the integrability theory of Jacobians \cite{I}. Recently, the compactness on Morrey spaces in \cite{PS} were applied to discuss fine Morrey and H\"{o}lder regularity of strong solutions to higher-order elliptic and parabolic equations with $\VMO$ coefficients. Afterwards, the relationship between the $\VMO$ space and compact commutators was extended and improved to the more general cases including Morrey space \cite{CDW}, weighted Lebesgue space \cite{CC, WY}, the operators with non-smooth kernels \cite{CCH, CD},  and the recent bilinear singular integrals \cite{BDMT, BT}. It is worth pointing out that all these results are obtained following Uchiyama's approach, especially the Fr\'{e}chet-Kolmogorov theorem and its variations.   

It is well known that to fully understand the $\VMO$ space, one first should comprehend the larger space $\BMO$. The theory of the classical $\BMO$ was established by John and Nirenberg \cite{JN} and generalized by Duong and Yan \cite{DY-1, DY-2} to the function space $\BMO_L(\Rn)$ associated with an operator $L$.  Soon after,  the authors \cite{DDSTY} introduced and characterized the new function space $\VMO_L$ of vanishing mean oscillation associated with the operator $L$ in the context of the theory of tent spaces. As one has seen, the theory of the classical $\BMO$ and $\VMO$ is closely connected to the Laplacian $\Delta$.  On the other hand, the generalization of the operator $L$ brings the new challenges to study the $\VMO_L$ space. As far as we know, there is almost no literature to explore its other properties except for the duality.  Thus, three basic questions arising from \eqref{e:bK} motivate our work: 
\begin{enumerate}
\item[$\bullet$] Question 1: Does \eqref{e:bK} hold for Riesz transforms 
$\nabla L^{-1/2}$ associated with the operator $L$ other than the Laplacian? 
\item[$\bullet$] Question 2: What type of $\VMO_L$ spaces is suitable to 
\eqref{e:bK} for Riesz transforms $\nabla L^{-\frac12}$? 
\item[$\bullet$] Question 3: Are there other new properties for $\VMO_L$? 
\end{enumerate}

Before addressing these questions, let us get a glimpse of the possibility.  If $L$ is the Dirichlet Laplacian $\Delta_{D_+}$ on $\R^n_+$, then the $\BMO_{\Delta_{D_+}}(\R^n_+)$ space cannot be characterized by the boundedness of $[b, \nabla \Delta_{D_+}^{-1/2}]$ (see \cite[Theorem 1.4]{DHLWY}).  This indicates that the equation \eqref{e:bK} does not hold for $\nabla L^{-\frac12}$  in a very general framework. On the other hand,  \eqref{e:bK} holds for certain special operator, for example the Bessel operator $\Delta_{\lambda}$ in \cite{DLMWY}. Furthermore, as we know, the boundedness is prior condition for the compactness. Taking into consideration some research on the Neumann Laplacian $\Delta_N$ \cite{DDSY}  
and the boundedness of commutators of $\nabla \Delta_N^{-1/2}$ in \cite{LW}, we will pay our attention to the Neumann Laplacian $\Delta_N$.  We postpone all the definitions and notation in Section \ref{s:pre}. 

We begin with giving an answer to Question 1. 

\begin{theorem}\label{t:c-1}
Let $1<p<\infty$ and $j=1,\ldots,n$. Then $b \in \VMO_{\Delta_N}(\Rn)$ if and only if $[b, R_{N,j}]$ is a compact operator on $L^p(\Rn)$.
\end{theorem}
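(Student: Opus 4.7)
The plan is to split the equivalence into the two standard directions and prove each by an Uchiyama-type argument, but adapted to accommodate the specific structure of the Neumann Laplacian, most importantly the characterization (announced in the abstract) that $\VMO_{\Delta_N}(\Rn)$ is the $\BMO_{\Delta_N}$-closure of $C_c^{\infty}(\Rn)$ and its description in terms of classical $\VMO$ plus a pair of $\VMO$ spaces on the half-spaces $\R^n_{\pm}$.

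For the sufficiency direction ($\VMO_{\Delta_N} \Rightarrow$ compact), I would first invoke the closure characterization to pick a sequence $b_k \in C_c^{\infty}(\Rn)$ with $\|b-b_k\|_{\BMO_{\Delta_N}} \to 0$. Combined with the boundedness of $[\cdot, R_{N,j}]$ from $\BMO_{\Delta_N}(\Rn)$ to $\mathcal{L}(L^p(\Rn))$ (available from the work of Li--Wan \cite{LW}), this yields $[b_k, R_{N,j}] \to [b, R_{N,j}]$ in operator norm, so it suffices to prove compactness for each $b_k$. For $b_k \in C_c^{\infty}$, one writes the kernel of $R_{N,j}$ as a sum of a Calder\'on--Zygmund piece of Laplacian type plus a reflected/boundary piece coming from the even-in-one-variable extension used to represent $\Delta_N$. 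On each piece the kernel satisfies standard size and regularity bounds away from the diagonal, and the Fr\'echet--Kolmogorov criterion applies to $[b_k, R_{N,j}]$ to give compactness via smoothness of $b_k$, uniform equicontinuity of translations, and decay at infinity inherited from $\supp b_k$.

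For the necessity direction (compact $\Rightarrow \VMO_{\Delta_N}$), I would argue by contrapositive following Uchiyama's scheme. If $b \notin \VMO_{\Delta_N}(\Rn)$, then by the half-space decomposition of $\VMO_{\Delta_N}$ at least one of three things fails: the classical Uchiyama--Coifman--Weiss conditions on limits of mean oscillation at small scales, large scales, or escape to infinity. In each case I would construct a sequence of balls $B_k = B(x_k, r_k)$ realizing a uniform lower bound on mean oscillation, and then build test functions $f_k$ supported in $B_k$ with zero integral, $\|f_k\|_{L^p} \approx 1$, and with a uniform lower bound $\|[b, R_{N,j}] f_k\|_{L^p} \gtrsim c > 0$, produced from the integral representation of $R_{N,j}$. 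The key is to check that such $f_k$ can be chosen to concentrate so that no $L^p$-subsequence of $[b, R_{N,j}] f_k$ converges, which contradicts compactness. The treatment must be carried out separately for small scales, large scales, and escape to infinity, and in each case both the Calder\'on--Zygmund piece and the reflection piece of the Neumann Riesz kernel must produce the lower bound.

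The main obstacle I anticipate is the necessity direction at large scales and at infinity: the reflected piece of the kernel of $R_{N,j}$ behaves like a standard Riesz kernel only locally, and its tail can cancel or reinforce the main piece depending on proximity to the boundary $\{x_n = 0\}$. Handling this requires a careful case split according to whether the ball $B_k$ lies well away from $\{x_n = 0\}$, straddles it, or drifts to infinity while its center's distance to $\{x_n = 0\}$ remains bounded. The three regimes correspond exactly to the three components in the $\VMO_{\Delta_N}$ decomposition, and one should therefore be able to reduce each regime to a clean Uchiyama-type argument: either the classical one on $\Rn$, or an Uchiyama argument on a half-space with the appropriate boundary Riesz kernel. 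The final step is to verify that these localized arguments are compatible with the abstract Fr\'echet--Kolmogorov characterization of compactness in $L^p(\Rn)$, so that failure of $\VMO_{\Delta_N}$ membership in any single regime propagates to non-compactness of $[b, R_{N,j}]$ on the whole of $L^p(\Rn)$.
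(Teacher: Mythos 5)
Your overall plan identifies the right difficulties, but it leaves unresolved precisely the points the paper treats as the crux, and one of your two directions as written would not go through.

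For the sufficiency direction, the reduction to $b_k\in C_c^\infty(\Rn)$ via the closure characterization and the norm equivalence $\|[b,R_{N,j}]\|_{L^p\to L^p}\simeq\|b\|_{\BMO_{\Delta_N}}$ is a legitimate alternative framing. The gap is the claim that the Fr\'echet--Kolmogorov criterion then ``applies'' to $[b_k,R_{N,j}]$ once the kernel is split into a Calder\'on--Zygmund piece and a reflected piece. That split is not the issue: both $R_j(x,y)H(x_ny_n)$ and $R_j(x,\widetilde y)H(x_ny_n)$ carry the Heaviside factor $H(x_ny_n)$, which is discontinuous across $\{x_ny_n=0\}$, so the commutator kernel is \emph{not} smooth across $\{x_n=0\}$ even when $b_k$ is. As a result, $[b_k,R_{N,j}]f$ generically has a jump across $\{x_n=0\}$, and the uniform equicontinuity of translations in the $x_n$-direction required by Fr\'echet--Kolmogorov is not automatic. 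The paper flags exactly this obstruction (it says the Fr\'echet--Kolmogorov approach ``seems to be invalid'' here) and avoids it by proving and exploiting the algebraic identity
\begin{equation*}
[b,R_{N,j}]\bigl(f\mathbf{1}_{\R^n_+}\bigr)=\bigl([b_{+,e},R_j](f_{+,e})\bigr)\mathbf{1}_{\R^n_+},
\end{equation*}
together with its analogue on $\R^n_-$. This reduces compactness of $[b,R_{N,j}]$ to the already-known compactness of the classical commutators $[b_{\pm,e},R_j]$ (Uchiyama), applied to the even extensions $f_{\pm,e}$, with no Fr\'echet--Kolmogorov argument needed on the Neumann side at all.

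For the necessity direction, your proposal outlines the three-regime Uchiyama scheme and correctly anticipates that the reflected piece may cancel or reinforce the main piece depending on where the ball sits relative to $\{x_n=0\}$, but it does not say how to handle this; it only names the difficulty. The paper's resolution is again to transfer the whole problem through the identity above: compactness of $[b,R_{N,j}]$ on $L^p(\Rn)$ implies compactness of $[b_{+,e},R_j]$ as an operator from the closed subspace $L^p_e(\Rn)$ of even functions into $L^p(\Rn)$ (and similarly for $b_{-,e}$). One then runs Uchiyama's argument for the ordinary Riesz transform, but with symmetrized test functions $f_j=g_j+\widetilde g_j\in L^p_e(\Rn)$, and verifies by an explicit sign analysis of $y_i-z_i$ and $y_i-\widetilde z_i$ (treating $1\le i\le n-1$ and $i=n$ separately) that the contribution from the reflected term does not cancel the main term. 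Without this reduction to $L^p_e$ and without a concrete cancellation estimate for the symmetrized test functions, your outline does not yet close the argument. Also note that the companion lower bound $\|b\|_{\BMO_{\Delta_N}}\lesssim\|[b,R_{N,j}]\|_{L^p\to L^p}$, which underlies the reduction, is itself proved in the paper by the same even-subspace device (Theorem~\ref{t:evenLp}); you cannot simply quote a classical Uchiyama-type lower bound for $R_{N,j}$ on all of $L^p(\Rn)$ because $R_{N,j}$ is not a Calder\'on--Zygmund operator on $\Rn$.
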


Our next main result is to indicate that the equation \eqref{e:bK} also holds for the fractional integrals associated with the Neumann Laplacian $\Delta_N$.  

\begin{theorem}\label{t:c-2}
Let $0 < \alpha < n$, $ 1 < p < q < \infty $ with $\frac{1}{q} = \frac{1}{p} - \frac{\alpha}{n}$. Then $b \in \VMO_{\Delta_N}(\Rn)$ if and only if $[b, \Delta_{N}^{-\alpha/2}]$ is a compact operator from $L^p(\Rn)$ to $L^q(\Rn)$.
\end{theorem}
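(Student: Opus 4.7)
The plan is to prove the two implications separately, following the Uchiyama-type template used for Theorem \ref{t:c-1} but adjusting for the fractional exponent $\alpha$ and for the fact that the target space $L^q$ differs from the source $L^p$. Throughout I will freely use the two tools developed earlier in the paper: the characterization of $\VMO_{\Delta_N}(\Rn)$ as the $\BMO_{\Delta_N}(\Rn)$-closure of $C_c^\infty(\Rn)$, and the $L^p\to L^q$ boundedness of the commutator controlled by $\|b\|_{\BMO_{\Delta_N}(\Rn)}$.

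For the sufficiency direction (VMO implies compactness), the density result lets me approximate $b\in\VMO_{\Delta_N}(\Rn)$ by a sequence $b_k\in C_c^\infty(\Rn)$ in $\BMO_{\Delta_N}$-norm, whence $[b_k,\Delta_N^{-\alpha/2}]\to [b,\Delta_N^{-\alpha/2}]$ in the operator norm from $L^p$ to $L^q$. Since compact operators form a norm-closed subspace of $\mathcal{K}(L^p,L^q)$, it suffices to prove compactness when $b\in C_c^\infty(\Rn)$. For such $b$, I will apply the Fr\'echet-Kolmogorov characterization of precompact sets in $L^q(\Rn)$ to the image of the unit ball of $L^p$: one checks uniform $L^q$-boundedness (a consequence of the operator-norm bound), uniform tightness at infinity, and uniform equicontinuity under translations. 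The verification rests on the kernel representation of $\Delta_N^{-\alpha/2}$, which, under the even-reflection realization of the Neumann Laplacian, splits into the Riesz potential kernel $c_{n,\alpha}|x-y|^{\alpha-n}$ plus a reflected term $c_{n,\alpha}|x-\tilde y|^{\alpha-n}$; both are smooth off the diagonal and obey pointwise H\"older estimates which, combined with the compact support and smoothness of $b$, yield the three Fr\'echet-Kolmogorov conditions after a standard splitting into near- and far-diagonal regions.

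For the necessity direction (compactness implies VMO), I argue by contradiction against each of the three defining conditions of $\VMO_{\Delta_N}(\Rn)$, namely vanishing of the $\MO$ quantity as the underlying cube shrinks, grows, or recedes from the origin. Failure of any one of these produces a sequence of cubes $Q_k$ on which $b$ oscillates at a definite rate; setting
$$f_k = \sgn(b - b_{Q_k})\,\chi_{Q_k}\,|Q_k|^{-1/p},$$
the $f_k$ are uniformly bounded in $L^p$. I will then locate for each $k$ a set $E_k$ well-separated from $Q_k$ such that the kernel of $\Delta_N^{-\alpha/2}$ admits a matching lower bound on $E_k\times Q_k$, yielding $\|[b,\Delta_N^{-\alpha/2}] f_k\|_{L^q(E_k)}\gtrsim 1$. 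Because the sets $E_k$ drift in scale or position with $k$, the sequence $\{[b,\Delta_N^{-\alpha/2}] f_k\}$ has no $L^q$-convergent subsequence, contradicting the hypothesized compactness.

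The main obstacle is the loss of translation invariance caused by the reflected piece of the kernel: on the sufficiency side, translation continuity in $L^q$ must be verified separately for the reflected term, since a shift of $x$ is not a shift of the integration variable there; on the necessity side, when $Q_k$ is close to the reflecting hyperplane the principal and reflected pieces can partially cancel, and $E_k$ must be chosen so that the lower bound survives this interaction. A secondary subtlety is that passing from $L^p\to L^p$ (the Riesz case of Theorem \ref{t:c-1}) to $L^p\to L^q$ requires recalibrating the choice of $|Q_k|^{-1/p}$ normalization and the diameter/location of $E_k$ so that the resulting $L^q(E_k)$ lower bound is uniform in $k$. Both difficulties are resolved by invoking the decomposition of $\VMO_{\Delta_N}(\Rn)$ into classical VMO pieces on the two half-spaces established earlier in the paper, which pinpoints exactly which oscillation conditions compactness forces on $b$ and identifies the geometric regions on which the kernel lower bound is nontrivial.
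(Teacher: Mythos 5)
Your plan diverges from the paper's in two material ways, one of them a genuine gap.

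On the sufficiency side, the paper does not run a Fr\'echet--Kolmogorov argument at all. It uses Theorem \ref{t:VMO} to split $b\in\VMO_{\Delta_N}(\Rn)$ into $b_{+,e},b_{-,e}\in\VMO(\Rn)$, then uses the identity (the $\alpha$-analogue of \eqref{e:bRf}) $[b,\Delta_N^{-\alpha/2}](f\mathbf{1}_{\R^n_\pm})=[b_{\pm,e},\Delta^{-\alpha/2}](f_{\pm,e})\mathbf{1}_{\R^n_\pm}$, and then imports Wang's classical result \cite{W} that $a\in\VMO(\Rn)$ iff $[a,\Delta^{-\alpha/2}]$ is compact $L^p\to L^q$. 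Compactness of $[b,\Delta_N^{-\alpha/2}]$ then drops out by extracting a convergent subsequence for the $+$ piece and then a further subsequence for the $-$ piece, exactly as in the Riesz-transform case. No Fr\'echet--Kolmogorov estimate on $\Rn$ is ever made.

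Your description of the kernel contains an error that matters for your proposed route: you write $K_{N,\alpha}(x,y)=c_{n,\alpha}|x-y|^{\alpha-n}+c_{n,\alpha}|x-\widetilde y|^{\alpha-n}$, but the actual kernel carries the Heaviside factor $H(x_ny_n)$; it vanishes identically whenever $x$ and $y$ lie on opposite sides of the hyperplane $\{x_n=0\}$. In particular, for $f$ supported in $\R_+^n$ the function $[b,\Delta_N^{-\alpha/2}]f$ is supported in $\overline{\R_+^n}$ and in general jumps across $\{x_n=0\}$. This is precisely the obstruction the authors flag in the introduction as the reason a direct Fr\'echet--Kolmogorov argument ``seems to be invalid'' here and why they route everything through the $\pm,e$ decomposition. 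With the $H(x_ny_n)$ factor reinstated, your verification of translation equicontinuity would have to supply a uniform estimate of the form $\sup_{\|f\|_p\le1}\|[b,\Delta_N^{-\alpha/2}]f\|_{L^q(\{|x_n|<\delta\}\cap B(0,M))}\to0$ as $\delta\to0$ to absorb the jump across the hyperplane, and that estimate is not produced anywhere in your proposal; the sentence claiming that ``both difficulties are resolved by invoking the decomposition of $\VMO_{\Delta_N}(\Rn)$'' is not a proof of it. So as written there is a gap. (The density reduction to $b\in C_c^\infty(\Rn)$ via Theorem \ref{t:Cc} and the operator-norm upper bound is fine and matches standard practice.)

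On the necessity side your contradiction scheme is close in spirit to what the paper (implicitly, ``as in Theorem \ref{t:compact-1}'') would carry out after reducing to the even subspace $L^p_e$. Note, though, that the paper first establishes the boundedness characterization $\|[b,\Delta_N^{-\alpha/2}]\|_{L^p\to L^q}\simeq\|b\|_{\BMO_{\Delta_N}(\Rn)}$ — the lower bound comes from the argument of \cite[(7.4)]{DHLWY} together with the two-weight result \cite{HRS} for $[a,\Delta^{-\alpha/2}]$ — before running any compactness contradiction; this step, which guarantees $b\in\BMO_{\Delta_N}(\Rn)$ in the first place, is missing from your outline.
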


Theorems \ref{t:c-1} and \ref{t:c-2} also provide  positive answers to Question 2. Additionally, $\VMO_L$ space is suitable to \eqref{e:bK} for Riesz transforms $\nabla L^{-\frac12}$ when $L$ is the Neumann Laplacian $\Delta_{N_+}$ ($\Delta_{N_-}$) on the upper (lower) half-space.  Actually,  we have established the desired properties for the corresponding $\VMO$ spaces on the half-space in Section \ref{s:VMO}.  The approach in Section \ref{s:compact} is easily modified to the setting of half-spaces. The details are left to the readers. 

Considering Question 3, we first build a bridge between the $\VMO_{\Delta_N}(\Rn)$ and the classical $\VMO$ space. As we will see, it is quite valuable to further study the $\VMO_{\Delta_N}(\Rn)$ space.  
\begin{theorem}\label{t:VV}
The $\VMO_{\Delta_N}(\Rn)$ space can be characterized in the following way:
\begin{equation*}
\VMO_{\Delta_N}(\Rn) = \big\{f \in \M(\Rn): f_{+,e} \in \VMO(\Rn) \text{ and } f_{-,e} \in \VMO(\Rn) \big\}.
\end{equation*}
Moreover, we have that
\begin{align*}
\|f\|_{\VMO_{\Delta_N}(\Rn)} \simeq \|f_{+,e}\|_{\VMO(\Rn)} + \|f_{-,e}\|_{\VMO(\Rn)}.
\end{align*}
\end{theorem}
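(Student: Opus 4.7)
The starting point is the $\BMO$-level analog of the theorem,
\begin{equation*}
\|f\|_{\BMO_{\Delta_N}(\R^n)} \simeq \|f_{+,e}\|_{\BMO(\R^n)} + \|f_{-,e}\|_{\BMO(\R^n)},
\end{equation*}
which will have been established earlier in the paper from the reflection identity $(e^{-t\Delta_N}f)\big|_{\R^n_\pm}=(e^{-t\Delta}f_{\pm,e})\big|_{\R^n_\pm}$ for the Neumann heat semigroup. The claimed norm equivalence will be an immediate consequence of this once the set identity is proved.

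To prove the set identity, I would pass through Uchiyama-type characterizations of $\VMO_{\Delta_N}(\R^n)$ and $\VMO(\R^n)$: each is the subspace of functions in the respective $\BMO$ space whose mean oscillation $\mathrm{MO}(\,\cdot\,,Q)$ vanishes in the three Uchiyama regimes (as the cube $Q$ shrinks, as it expands, and as it escapes to infinity). The first step is therefore to upgrade the aggregate $\BMO$ equivalence to a \emph{cube-by-cube} oscillation inequality. Writing $Q^{\ast}$ for the reflection of a cube $Q$ across $\{x_n=0\}$ and $\widetilde Q:=Q\cup Q^\ast$, the goal is
\begin{equation*}
\mathrm{MO}_{\Delta_N}(f,Q)\ \lesssim\ \mathrm{MO}(f_{+,e},\widetilde Q)+\mathrm{MO}(f_{-,e},\widetilde Q),
\end{equation*}
together with a matching lower bound (possibly on $Q$ and $Q^\ast$ separately). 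The argument decomposes $Q=(Q\cap\R^n_+)\sqcup(Q\cap\R^n_-)$, applies the reflection change of variables on one piece, and absorbs the reflection into the integrand via the very definition of $f_{\pm,e}$; it is in essence the cube-by-cube version of the $\BMO$ argument alluded to above.

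With this scale-sensitive comparison in hand, both directions of the theorem become routine. If $f\in\VMO_{\Delta_N}(\R^n)$, then $\mathrm{MO}_{\Delta_N}(f,Q)\to 0$ in each of the three Uchiyama regimes; since $\widetilde Q$ has side length comparable to that of $Q$ and escapes to infinity whenever $Q$ does, the vanishing transfers to $f_{+,e}$ and $f_{-,e}$, placing them in $\VMO(\R^n)$. The converse direction applies the reverse inequality in the same way, and the norm equivalence is then read off from the $\BMO$ equivalence.

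\textbf{Main obstacle.} The cube-by-cube inequality is most delicate when $Q$ straddles the hyperplane $\{x_n=0\}$: the Neumann action on $Q$ couples the two half-spaces through the reflected part of the heat kernel, and one must verify that the resulting cross terms are controlled by the classical mean oscillations of $f_{+,e}$ and $f_{-,e}$ on $\widetilde Q$. A secondary technical point concerns the $\gamma_3$-regime of cubes escaping to infinity: a cube $Q$ far from the origin must have $Q^\ast$ also far from the origin (this is clear because reflection preserves the $x'$-coordinates and $|x_n|$, so $\operatorname{dist}(Q,0)=\operatorname{dist}(Q^\ast,0)$), and this observation must be explicitly recorded so that the three classical vanishing limits for $f_{\pm,e}$ genuinely imply the three Neumann vanishing limits for $f$.
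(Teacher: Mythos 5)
Your overall plan (pass to half-spaces via the reflection identity, compare oscillations cube-by-cube, and match the three Uchiyama vanishing regimes) is indeed the route the paper takes; it first shows $\VMO_{\Delta_N}(\Rn)$ is built from $\VMO_{\Delta_{N_\pm}}(\R^n_\pm)$ and then identifies these with $\VMO_e(\R^n_\pm)$, using the cited equivalence $\VMO_\Delta(\Rn)=\VMO(\Rn)$ to pass from semigroup oscillation to classical mean oscillation. However, two of your working hypotheses need correction before the plan would close. First, the set $\widetilde Q:=Q\cup Q^\ast$ does not do what you want: for a cube $Q\subset\R^n_+$ at height $h\gg\ell(Q)$, $Q\cup Q^\ast$ is disconnected and any cube containing it has side length $\gtrsim h$, so ``comparable side length'' fails. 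What actually happens is cleaner: when $Q\subset\R^n_+$ the reflection identity gives an exact equality of $\fint_Q|f-e^{-\ell(Q)^2\Delta_N}f|^2$ with $\fint_Q|f_{+,e}-e^{-\ell(Q)^2\Delta}f_{+,e}|^2$ on $Q$ itself, when $Q\subset\R^n_-$ one uses the single reflected cube $Q^\ast$, and only when $Q$ straddles $\{x_n=0\}$ does one need an auxiliary construction. There the paper replaces $Q\cap\R^n_\pm$ (which are not cubes) by cubes $\widehat Q_\pm$ of side $\ell(Q)$ sitting flush against the hyperplane and containing $Q\cap\R^n_\pm$; that enlargement is the geometric step your proposal leaves implicit.

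Second, the ``main obstacle'' you describe is a misreading of $\Delta_N$. Its heat kernel carries the Heaviside factor $H(x_n y_n)$, so $e^{-t\Delta_N}$ restricted to either half-space sees only data from that half-space; there is no coupling between $\R^n_+$ and $\R^n_-$ and hence no cross term to estimate, even when $Q$ straddles the hyperplane. The term $e^{-|x_n+y_n|^2/4t}$ is the image charge for the Neumann condition and involves only points on the same side. The real subtlety in the straddling case is purely geometric (the $\widehat Q_\pm$ enlargement just described), not analytic. Finally, your argument silently uses that the semigroup-defined $\VMO_\Delta(\Rn)$ agrees with the classical $\VMO(\Rn)$ (Proposition 3.6 of \cite{DDSTY}, quoted as Theorem~\ref{t:VB} here); the reflection identity lands you in $\Delta$-semigroup oscillations of $f_{\pm,e}$, and this nontrivial equivalence is what converts them into the Uchiyama conditions of Proposition~\ref{p:VMO}. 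That citation should be surfaced rather than absorbed into ``routine.''
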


Beyond that, we can understand the $\VMO_{\Delta_N}(\Rn)$ space in the following way.
\begin{theorem}\label{t:Cc}
The $\VMO_{\Delta_N}(\Rn)$ space is the $\BMO_{\Delta_N}(\Rn)$-closure of $C_c^{\infty}(\Rn)$.
\end{theorem}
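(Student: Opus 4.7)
The plan is to establish two inclusions between $\VMO_{\Delta_N}(\Rn)$ and $\overline{C_c^{\infty}(\Rn)}^{\BMO_{\Delta_N}}$.

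For the inclusion $\overline{C_c^{\infty}(\Rn)}^{\BMO_{\Delta_N}} \subseteq \VMO_{\Delta_N}(\Rn)$, I would verify two facts. First, $C_c^{\infty}(\Rn) \subseteq \VMO_{\Delta_N}(\Rn)$: for $\phi \in C_c^{\infty}(\Rn)$, the even extensions $\phi_{+,e}$ and $\phi_{-,e}$ are continuous with compact support, hence lie in $\VMO(\Rn)$ by the classical Coifman--Weiss theory, so Theorem \ref{t:VV} yields $\phi \in \VMO_{\Delta_N}(\Rn)$. Second, $\VMO_{\Delta_N}(\Rn)$ is a closed subspace of $\BMO_{\Delta_N}(\Rn)$: the norm equivalence in Theorem \ref{t:VV} converts $\BMO_{\Delta_N}$-convergence into simultaneous $\BMO(\Rn)$-convergence of the two even extensions, and $\VMO(\Rn)$ is closed in $\BMO(\Rn)$.

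For the reverse inclusion, I would take $f \in \VMO_{\Delta_N}(\Rn)$; Theorem \ref{t:VV} furnishes $f_{+,e}, f_{-,e} \in \VMO(\Rn)$, and the classical identity $\VMO(\Rn) = \overline{C_c^{\infty}(\Rn)}^{\BMO}$ produces $u_k, v_k \in C_c^{\infty}(\Rn)$ with $u_k \to f_{+,e}$ and $v_k \to f_{-,e}$ in $\BMO(\Rn)$. After the $x_n$-symmetrization $u_k(x',x_n) \mapsto \tfrac{1}{2}(u_k(x',x_n) + u_k(x',-x_n))$, and likewise for $v_k$ (which preserves $\BMO$-convergence since the limits $f_{\pm,e}$ are themselves even), I may assume both $u_k$ and $v_k$ are even in $x_n$. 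It remains to glue $u_k$ and $v_k$ into a single $\phi_k \in C_c^{\infty}(\Rn)$ whose even extensions $(\phi_k)_{\pm,e}$ stay $\BMO(\Rn)$-close to $u_k$ and $v_k$ respectively; Theorem \ref{t:VV} will then deliver $\|\phi_k - f\|_{\BMO_{\Delta_N}} \to 0$.

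The gluing step is the main obstacle. A piecewise definition of $\phi_k$ as $u_k$ on $\R^n_+$ and $v_k$ on $\R^n_-$ is generically discontinuous across $\{x_n = 0\}$, while a smooth transition in a strip $\{|x_n| < \delta\}$ introduces a correction supported on that strip whose $\BMO$-norm does not vanish as $\delta \to 0$ (indicators of strips carry $\BMO$-oscillation bounded below by a positive constant). The intended route around this is to invoke the general approximation result for $\BMO$ functions on spaces of homogeneous type announced in the abstract and developed in the last section of the paper, specialized to the Neumann Laplacian heat semigroup, which yields the required $C_c^{\infty}$-approximants in a single step without an explicit gluing. A hands-on alternative would be to modify $u_k$ and $v_k$ in a small neighborhood of $\{x_n = 0\}$ so that they agree there to sufficiently many orders of normal derivatives, then glue through a $C^{\infty}$ partition of unity, and absorb the $\BMO$ cost of the modification by exploiting the semigroup smoothing intrinsic to $\BMO_{\Delta_N}$.
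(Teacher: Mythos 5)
Your treatment of the easy inclusion $\overline{C_c^{\infty}(\Rn)}^{\BMO_{\Delta_N}} \subseteq \VMO_{\Delta_N}(\Rn)$ is sound and essentially matches the paper (the paper routes through $\overline{C_c(\Rn)}^{\BMO} = \VMO(\Rn)$ applied to $g_{\pm,e}$, but your version via closedness of $\VMO$ in $\BMO$ works just as well). The problem is the reverse inclusion, where you correctly identify the gluing obstacle but neither of your proposed workarounds resolves it, and one rests on a misreading.

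Your fallback (a) does not work: Proposition \ref{p:appr-BMO} in Section \ref{s:homo} produces approximants $f_j$ satisfying $\|f_j\|_{\BMO} \lesssim \|f\|_{\BMO}$, pointwise domination, and a.e.\ convergence --- \emph{not} convergence $\|f_j - f\|_{\BMO} \to 0$. That is precisely what you need for $\VMO$-density and precisely what such approximations cannot give for general $\BMO$ functions (otherwise $\VMO = \BMO$). In the paper that proposition is used for a different purpose (the duality Theorem \ref{t:VMOr-H}). Your fallback (b), ``absorb the $\BMO$ cost by exploiting the semigroup smoothing intrinsic to $\BMO_{\Delta_N}$,'' is not an argument; you would need to say what estimate the semigroup provides, and in fact no such mechanism is used.

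The idea you are missing is in Lemmas \ref{lem:VMO-1}--\ref{lem:VMO-3}. Your diagnosis that ``a smooth transition in a strip $\{|x_n|<\delta\}$ has $\BMO$-norm bounded below'' is correct for a \emph{fixed-profile} bump rescaled to width $\delta$ (by dilation invariance of $\BMO$), but the paper does not use such a bump. Lemma \ref{lem:VMO-1} takes an \emph{average} $\psi_\ell = \frac{1}{\ell}\sum_{j=1}^{\ell}\varphi_j$ of nested cutoffs at $\ell$ dyadic scales and shows $\|\psi_\ell\|_{\BMO(\R)} \le 16/\ell$: the $\BMO$ norm is made small by spreading the descent from $1$ to $0$ over many scales, not by shrinking the support. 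After rescaling (Lemma \ref{lem:VMO-2}) one gets even $\psi \in C_c^\infty(\R)$ with $\psi(0)=1$, $\supp\psi \subset [-\eta,\eta]$, and $\|\psi\|_{\BMO(\R)} < \epsilon$ for any prescribed $\eta$. Lemma \ref{lem:VMO-3} then shows $\|g(x',0)\psi(x_n)\|_{\BMO(\Rn)}$ can be made small. Crucially, the paper also avoids making your $u_k$ and $v_k$ agree near $\{x_n=0\}$. Instead, with $g_1$ (even, close to $f_{+,e}$) and $g_2$ (even, close to $f_{-,e}$), it \emph{adds} the other function's boundary trace damped by $\psi$:
\[
h(x) = \begin{cases} g_1(x) + g_2(x',0)\psi_2(x_n), & x \in \R^n_+, \\ g_1(x',0)\psi_1(x_n) + g_2(x), & x \in \R^n_-. \end{cases}
\]
Both pieces agree at $x_n = 0$ (both equal $g_1(x',0)+g_2(x',0)$), so $h \in C_c(\Rn)$, while $h_{+,e} - g_1 = g_2(\cdot,0)\psi_2$ and $h_{-,e} - g_2 = g_1(\cdot,0)\psi_1$ are small in $\BMO(\Rn)$; Theorem \ref{t:VV} then closes the estimate. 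This additive-bridge construction, powered by the dyadic-averaging lemma, is the substantive content of the proof and is absent from your proposal.
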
 

We also amalyze the other properties, including characterizations, duality and weak*-convergence, of $\VMO_{\Delta_N}(\Rn)$ and associated spaces in Section \ref{s:VMO} and Section \ref{s:dual}. 

Now let us discuss the strategy of the proof. Generally, the proof of \eqref{e:bK}, as well as other known results about the compactness of commutators, makes use of a characterization of precompactness in Lebesgue spaces, which is the so-called 
Fr\'{e}chet-Kolmogorov theorem. Such theorem has been adapted for various spaces  for examples, \cite{CCH},  \cite{CDW}, \cite{CC} and \cite{DLMWY}.  Even so, it seems to be invalid for the Neumann Laplacian $\Delta_N$. One main reason is that 
the smooth properties on $\Rn$ are not enough although the Riesz transforms $\nabla \Delta_N$ are Calder\'{o}n-Zygmund operators on both $\R^n_+$ and $\R^n_-$. In order to circumvent this obstacle, we reduce our question to that in $L^p_e(\Rn)$, 
which is a closed subspace of $L^p(\Rn)$ and contains all even functions with respect to the last variable.  Theorem \ref{t:VV} is based on the reflection argument on $\Rn$. Thus it allows us to focus on the analysis on half-spaces. The proof of Theorem 
\ref{t:Cc} is constructive but different from Uchiyama's. We mainly apply some $\BMO$ estimates for smooth functions with compact support. In view of Theorem \ref{t:VV}, it needs to connect the functions on the upper and lower spaces by continuity and smoothness. As we mentioned above,  the $\VMO_{\Delta_N}(\Rn)$ space is closely related to those on half-spaces, such as $\VMO_e(\R^n_+)$, $\VMO_r(\R^n_+)$ and $\VMO_z(\R^n_+)$.  Hence, we also investigate their duality to understand $\VMO_{\Delta_N}(\Rn)$ well.  Our method is motivated by \cite{CW} and \cite{C}. Some results from functional analysis is quite effective on our conclusion.  Not only that, we utilize an approximation for $\BMO$ functions by the continuous functions with bounded support. The general case will be presented in Section \ref{s:homo}.

This article is organized as follows. In Section \ref{s:pre}, we recall the definitions of the Neumann Laplacian $\Delta_{N_+}$ and the reflection Neumann Laplacian $\Delta_N$. We also collect some known results related to various types of $\BMO$ spaces. In Section \ref{s:VMO}, we introduce the vanishing mean oscillation space $\VMO_{\Delta_N}(\Rn)$ associated with $\Delta_N$, and provide its characterizations by means of the classical $\VMO(\Rn)$ space, the $\VMO$ on the half-spaces, and smooth functions with compact supports. Section \ref{s:dual} is devoted to the duality between certain $\VMO$ spaces and the corresponding Hardy spaces.  After that, in Section \ref{s:compact}, we establish other characterizations of $\VMO_{\Delta_N}(\Rn)$ using the compact commutators of Riesz transforms and fractional integral operators associated with $\Delta_N$. Finally, in Section \ref{s:homo}, an approximation is presented for $\BMO$ functions on the space of homogeneous type in the sense of Coifman-Weiss.

\section{Preliminaries}\label{s:pre}

\subsection{The Neumann Laplacian}
The Neumann problem on the half line $(0,\infty)$ is given by the following:
\begin{equation}\label{e:N}
\begin{cases}
u_t - u_{xx} = 0, & \ x,t \in (0,\infty), \\
u(x,0) = \phi(x), & \\
u_{x}(0,t) = 0.
\end{cases}
\end{equation}
Let $\Delta_{1,N_{+}}$ be the Laplacian corresponding to \eqref{e:N}. According to \cite[Section~3.1]{S}, we see that
\begin{align*}
u(x,t) = e^{- t \Delta_{1,N_{+}}} (\phi)(x).
\end{align*}
For $n > 1$, write $\R^n_{+} = \R^{n-1} \times \R_{+}$. And we define the Neumann Laplacian on $\R^{n}_{+}$ by
\begin{align*}
\Delta_{N_+} := \Delta_{n,N_+} = \Delta_{n-1} + \Delta_{1,N_{+}},
\end{align*}
where $\Delta_{n-1}$ is the Laplacian on $\R^{n-1}$. Similarly, we can define Neumann Laplacian $\Delta_{N_-}:=\Delta_{n,N_{-}}$ on $\R^{n}_{-}$.

The Laplacian $\Delta$ and Neumann Laplacian $\Delta_{N_{\pm}}$ are positive definite self-adjoint operators. By the spectral theorem one can define the semigroups generated by these operators $\{e^{-t \Delta}\}_{t \geq 0}$ and $\{e^{-t \Delta_{N_{\pm}}}\}_{t \geq 0}$. Set $p_t(x,y)$ and $p_{t,\Delta_{N_{\pm}}}(x,y)$ to be the heat kernels corresponding to the semigroups generated by $\Delta$ and $\Delta_{N_{\pm}}$, respectively. Then there holds
\[
p_t(x,y) = (4\pi t)^{-\frac{n}{2}} e^{-\frac{|x-y|^2}{4t}}.
\]
It follows from the reflection method \cite[p.~60]{S} that
\begin{align*}
p_{t,\Delta_{N_+}}(x,y) &= (4\pi t)^{-\frac{n}{2}} e^{-\frac{|x'-y'|^2}{4t}}
\Big(e^{-\frac{|x_n-y_n|^2}{4t}} + e^{-\frac{|x_n+y_n|^2}{4t}}\Big), \ \ x,y \in \R^n_+;
\\ 
p_{t,\Delta_{N_-}}(x,y) &= (4\pi t)^{-\frac{n}{2}} e^{-\frac{|x'-y'|^2}{4t}}
\Big(e^{-\frac{|x_n-y_n|^2}{4t}} + e^{-\frac{|x_n+y_n|^2}{4t}}\Big), \ \ x,y \in \R^n_-.
\end{align*}

Now let $\Delta_N$ be the uniquely determined unbounded operator acting on $L^2(\Rn)$ such that
\begin{equation}\label{e:NL}
(\Delta_N f)_+ = \Delta_{N_+} f_+ \ \text{ and } \
(\Delta_N f)_- = \Delta_{N_-} f_-
\end{equation}
for all $f:\Rn \to \R$ such that $f_+ \in W^{1,2}(\R^n_+)$ and $f_- \in W^{1,2}(\R^n_-)$. Then $\Delta_N$ is a positive self-adjoint operator and
\begin{align}\label{e:exp}
\big(e^{-t\Delta_N}f \big)_+ = e^{-t \Delta_{N_+}} f_+ \ \ \text{ and } \ \
\big(e^{-t\Delta_N}f \big)_- = e^{-t \Delta_{N_-}} f_-.
\end{align}
The heat kernel of $e^{-t \Delta_N}$, denoted by $p_{t,\Delta_N}(x,y)$, is given by
\begin{align*}
p_{t,\Delta_N}(x,y) = (4\pi t)^{-\frac{n}{2}} e^{-\frac{|x'-y'|^2}{4t}}
\Big(e^{-\frac{|x_n-y_n|^2}{4t}} + e^{-\frac{|x_n+y_n|^2}{4t}}\Big) H(x_n y_n),
\end{align*}
where $H: \R \to \{0,1\}$ is the Heaviside function given by
\[
H(t)=1,\ \text{ if } t \geq 0;\ \  H(t)=0,\ \text{ if } t<0.
\]

Note that
\begin{enumerate}
\item[$\bullet$] The operators $\Delta$, $\Delta_{N_{\pm}}$ and $\Delta_N$ are self-adjoint and they generate bounded analytic positive semigroups acting on all $L^p(\Rn)$ spaces for $1\leq p \leq \infty$; 
\item[$\bullet$] Let $p_{t,L}(x,y)$ be the kernel corresponding to the semigroup generated by one of the operators $L$ listed above. Then $p_{t,L}(x,y)$ satisfies Gaussian bounds:
\[
|p_{t,L}(x,y)| \lesssim t^{-\frac{n}{2}} e^{-\frac{|x-y|^2}{t}},
\]
for all $x,y \in \Omega$, where $\Omega=\Rn$ for $\Delta$ and $\Delta_N$; $\Omega=\R^n_+$ for $\Delta_{N_+}$ and $\Omega=\R^n_-$ for $\Delta_{N_-}$.
\end{enumerate}

The heat kernels for $\Delta_{N_{\pm}}$ and $\Delta_N$ enjoy the smoothness property as follows.
\begin{proposition}[\cite{LW}]
For the operator $L \in \{\Delta_{N_+}, \Delta_{N_-}, \Delta_N\}$, there hold that
\begin{align*}
|p_{t,L}(x,y)-p_{t,L}(x',y)| \lesssim \frac{|x-x'|}{\sqrt{t}+|x-y|}
\frac{\sqrt{t}}{(\sqrt{t}+|x-y|)^{n+1}}
\end{align*}
for any $x,x',y \in \R^n_+$ (or $x,x',y \in \R^n_-$) with $|x-x'|\leq \frac12 |x-y|$;
\begin{align*}
|p_{t,L}(x,y)-p_{t,L}(x,y')| \lesssim \frac{|y-y'|}{\sqrt{t}+|x-y|}
\frac{\sqrt{t}}{(\sqrt{t}+|x-y|)^{n+1}}
\end{align*}
for any $x,y,y' \in \R^n_+$ (or $x,y,y' \in \R^n_-$) with $|y-y'|\leq \frac12 |x-y|$.
\end{proposition}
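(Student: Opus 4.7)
The plan is to reduce everything to the known Gaussian gradient estimate for the free heat kernel $G_t(z) := (4\pi t)^{-n/2} e^{-|z|^2/(4t)}$, and then exploit the reflection structure of the Neumann kernels.

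First, I would record the scalar estimate
\[
|\nabla G_t(z)| \lesssim \frac{\sqrt{t}}{(\sqrt{t}+|z|)^{n+2}},
\]
which follows from $\nabla G_t(z) = -\frac{z}{2t} G_t(z)$ by splitting into the cases $|z| \le \sqrt{t}$ and $|z| > \sqrt{t}$ and using $|z|^{k} e^{-|z|^{2}/(8t)} \lesssim t^{k/2}$ for any $k$. An immediate consequence, through the mean value theorem, is that for $|x-x'| \le \frac12|x-y|$ (so that every $\xi$ on the segment $[x,x']$ satisfies $|\xi-y| \ge \frac12|x-y|$),
\[
|G_t(x-y)-G_t(x'-y)| \lesssim |x-x'| \sup_{\xi\in[x,x']}|\nabla G_t(\xi-y)| \lesssim \frac{|x-x'|}{\sqrt{t}+|x-y|}\cdot\frac{\sqrt{t}}{(\sqrt{t}+|x-y|)^{n+1}},
\]
and the symmetric estimate in $y$.

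Next, I would write the Neumann kernel on the upper half-space in the reflected form
\[
p_{t,\Delta_{N_+}}(x,y) = G_t(x-y) + G_t(x - y^{*}),\qquad y^{*}=(y',-y_n),
\]
and the analogous identity for $\Delta_{N_-}$ with reflection across the same hyperplane. The first summand is handled directly by the preceding paragraph. For the second, the key geometric observation is that when $x_n, y_n$ share a sign (which is the standing assumption in the statement), one has $|x-y^{*}| \ge |x-y|$, so the denominator $\sqrt{t}+|x-y^{*}|$ only helps; and for $|x-x'|\le\frac12|x-y|\le\frac12|x-y^{*}|$ the same mean-value argument applied with $y$ replaced by $y^{*}$ yields
\[
|G_t(x-y^{*})-G_t(x'-y^{*})| \lesssim \frac{|x-x'|}{\sqrt{t}+|x-y^{*}|}\cdot\frac{\sqrt{t}}{(\sqrt{t}+|x-y^{*}|)^{n+1}},
\]
which is majorised by the desired bound in $|x-y|$. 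For the $y$-regularity one uses that the reflection $y\mapsto y^{*}$ is an isometry, so $|y^{*}-(y')^{*}|=|y-y'|$ and $|y-y'|\le\frac12|x-y|\le\frac12|x-y^{*}|$, and the estimate transfers identically.

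Finally, for $\Delta_N$ the stated hypothesis places $x, x', y$ (or $x, y, y'$) in the same half-space, so the Heaviside factor $H(x_n y_n)$ is identically $1$ on all the points involved and
\[
p_{t,\Delta_N}(x,y) = \begin{cases} p_{t,\Delta_{N_+}}(x,y), & x,y\in\R^n_+, \\ p_{t,\Delta_{N_-}}(x,y), & x,y\in\R^n_-, \end{cases}
\]
so the conclusion follows from the half-space cases already proved. The only mildly delicate point, and the step I would double-check most carefully, is ensuring the reflected term $G_t(x-y^{*})$ really produces a bound in terms of $|x-y|$ (not $|x-y^{*}|$); the inequality $|x-y^{*}|\ge|x-y|$ under the same-sign assumption on $x_n, y_n$ is what makes this work, and it is the same ingredient that underlies the matching $y$-variable estimate.
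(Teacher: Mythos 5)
Your argument is correct, and it is the standard route for this statement (the paper itself only cites \cite{LW} for it): one reduces to the explicit formula $p_{t,\Delta_{N_\pm}}(x,y)=G_t(x-y)+G_t(x-\widetilde y)$, applies the Gaussian gradient bound $|\nabla G_t(z)|\lesssim \sqrt t/(\sqrt t+|z|)^{n+2}$ via the mean value theorem, and for the reflected summand uses the observation that $x_n y_n\ge 0$ forces $|x-\widetilde y|\ge |x-y|$, so the reflected term is controlled by an even smaller right-hand side and the hypothesis $|x-x'|\le \tfrac12|x-y|\le\tfrac12|x-\widetilde y|$ still licenses the mean-value step; the $\Delta_N$ case reduces to the half-space cases since the Heaviside factor is identically $1$ when all points lie in one half-space. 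The only point worth stating explicitly (which you essentially do) is that for the $y$-estimate you also need every $\eta$ on the segment $[\widetilde y,\widetilde y']$ to satisfy $|x-\eta|\ge\tfrac12|x-\widetilde y|$, which follows from $|\widetilde y-\widetilde y'|=|y-y'|\le\tfrac12|x-y|\le\tfrac12|x-\widetilde y|$ exactly as in the $x$-estimate.
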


\subsection{$\BMO$ spaces}

A locally integrable function $f$ on $\Rn$ is said to be in $\BMO(\Rn)$ if
\[
\|f\|_{\BMO(\Rn)} := \sup_{Q \subseteq \Rn} \frac{1}{|Q|}
\int_{Q} |f(x)-f_Q| dx < \infty,
\]
where $f_Q$ denotes the average value of $f$ on the cube $Q$.

Let $\VMO(\Rn)$ denote the closure of $C_c^{\infty}(\Rn)$ in $\BMO(\Rn)$. Additionally, the space $\VMO(\Rn)$ is endowed with the norm of $\BMO(\Rn)$. Here $C_c^{\infty}(\Rn)$ is the collection of $C^{\infty}(\Rn)$ functions with compact supports.

\begin{proposition}[\cite{U, DDSTY}]\label{p:VMO}
Let $f \in \BMO(\Rn)$. Then $f \in \VMO(\Rn)$ if and only if $f$ satisfies the following three conditions:
\begin{enumerate}
\item[(a)] $\gamma_1(f):=\lim\limits_{r \to 0} \sup\limits_{Q:\ell(Q) \leq r}
\Big(\fint_{Q} |f(x)-f_Q|^2 dx \Big)^{1/2} =0$,
\item[(b)] $\gamma_2(f):=\lim\limits_{r \to \infty} \sup\limits_{Q:\ell(Q) \geq r}
\Big(\fint_{Q} |f(x)-f_Q|^2 dx \Big)^{1/2} =0$,
\item[(c)] $\gamma_3(f):=\lim\limits_{r \to \infty} \sup\limits_{Q \subset Q(0,r)^c}
\Big(\fint_{Q} |f(x)-f_Q|^2 dx \Big)^{1/2} =0$.
\end{enumerate}
\end{proposition}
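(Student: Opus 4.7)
The plan is to prove the two implications separately. A preliminary observation underlying both directions is that each $\gamma_i$ is sublinear on $\BMO(\Rn)$ and satisfies the dominating bound $\gamma_i(f) \le C\|f\|_{\BMO(\Rn)}$, which follows from the John--Nirenberg inequality to pass between $L^1$ and $L^2$ averages.

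For the necessity direction, I first verify directly that $\gamma_1(\varphi) = \gamma_2(\varphi) = \gamma_3(\varphi) = 0$ for every $\varphi \in C_c^\infty(\Rn)$. Property (a) is immediate from uniform continuity, since the oscillation on cubes of side $r$ is bounded by the modulus of continuity of $\varphi$ at scale $r$. Properties (b) and (c) follow from compact support: if $\mathrm{supp}\,\varphi \subset Q(0,R_0)$, then on cubes $Q$ with $\ell(Q) \gg R_0$ the mean $\varphi_Q$ is $O(|Q|^{-1})$ and the $L^2$ average of $\varphi$ on $Q$ is $O(|Q|^{-1/2})$, while on cubes $Q \subset Q(0,r)^c$ with $r$ sufficiently large, $\varphi$ vanishes identically on $Q$. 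Given $f \in \VMO(\Rn)$ with approximating sequence $\varphi_k \to f$ in $\BMO$, the triangle inequality yields $\gamma_i(f) \le \gamma_i(\varphi_k) + \gamma_i(f - \varphi_k) \le C\|f - \varphi_k\|_{\BMO} \to 0$.

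For the sufficiency direction, given $\epsilon > 0$ the goal is to construct a $C_c^\infty$ approximant by combining a mollification with a smooth truncation. I choose $r_1$ small and $R$ large so that (a), (b), (c) each force oscillation $<\epsilon$ on the relevant families of cubes. Let $\eta_R \in C_c^\infty(\Rn)$ be a cutoff with $\eta_R \equiv 1$ on $Q(0,R)$, $\mathrm{supp}\,\eta_R \subset Q(0,2R)$, and $\|\nabla \eta_R\|_\infty \lesssim R^{-1}$; let $\phi_\delta$ be a standard mollifier with $\delta < r_1$; and set
\begin{equation*}
\varphi := \big( (f - f_{Q(0,2R)}) \eta_R \big) * \phi_\delta \in C_c^\infty(\Rn).
\end{equation*}
Because adding a constant leaves $\|\cdot\|_{\BMO}$ invariant, estimating $\|f - \varphi\|_{\BMO}$ reduces to controlling the mean oscillation of $(f - f_{Q(0,2R)}) - \varphi$ on an arbitrary cube $Q$. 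I split into three regimes: (i) $Q \subset Q(0,2R)$ with $\ell(Q) \le r_1$, where only the mollification error contributes and (a) controls it; (ii) $\ell(Q) \ge R$, where (b) directly bounds the oscillation of $f$ and the smoothness plus compact support of $\varphi$ yield a comparable bound; and (iii) $Q$ lying essentially in $Q(0,R)^c$, where (c) controls $f$ and $\varphi$ vanishes.

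The main obstacle I expect is the intermediate regime of cubes that straddle the boundary of $Q(0,2R)$: there $\eta_R$ is not constant, and one must couple the Lipschitz bound $\|\nabla \eta_R\|_\infty \lesssim R^{-1}$ with the local estimates coming from (b) and (c), while simultaneously absorbing the large factor $|f - f_{Q(0,2R)}|$ that appears when $\eta_R$ is differentiated. A Whitney-type subdivision of such borderline cubes into pieces controlled by exactly one of (a), (b), (c), together with a telescoping estimate $|f_{Q'} - f_{Q(0,2R)}| \lesssim \log(R/\ell(Q'))\,\|f\|_{\BMO}$ for intermediate cubes $Q'$, should make this tractable; getting the three scales $r_1, R, \ell(Q)$ to interact cleanly is the technical heart of the argument.
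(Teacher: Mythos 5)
The paper does not prove Proposition~2.2; it cites it from the references \cite{U,DDSTY}, so there is no internal proof against which to compare your argument.

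Your necessity direction is correct and complete: the sublinearity of the $\gamma_i$, the John--Nirenberg bound $\gamma_i(f)\lesssim\|f\|_{\BMO(\Rn)}$, the direct verification that $\gamma_i(\varphi)=0$ for $\varphi\in C_c^\infty(\Rn)$, and the density argument together give (a), (b), (c) for any $f\in\VMO(\Rn)$.

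The sufficiency direction, however, contains a genuine gap, and you have in fact pointed exactly at it without resolving it. For a cube $Q$ of side $s$ lying in the transition annulus of the cutoff, the contribution of the product rule to the mean oscillation of $(f-f_{Q(0,2R)})\eta_R$ is controlled by $|f_Q-f_{Q(0,2R)}|\cdot\mathrm{osc}_Q(\eta_R)\lesssim |f_Q-f_{Q(0,2R)}|\cdot s/R$. The telescoping estimate you propose, $|f_Q-f_{Q(0,2R)}|\lesssim\log(R/s)\,\|f\|_{\BMO(\Rn)}$, is only the generic $\BMO$ bound; plugging it in gives $\log(R/s)\cdot(s/R)\cdot\|f\|_{\BMO(\Rn)}$, which is maximized near $s\simeq R/e$ at a quantity comparable to $\|f\|_{\BMO(\Rn)}$ and is not small for any choice of $R$. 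So as written the estimate does not close. The fix is to observe that every link $Q_k\subset Q_{k+1}$ of the nesting chain from $Q$ up to $Q(0,2R)$ falls under one of the three conditions when $R$ is taken large enough (if $Q$ sits at distance $\simeq R$ from the origin, then each $Q_k$ is either small and far from the origin, so (a) or (c) applies, or has side $\gtrsim r_2$, so (b) applies — there is no intermediate gap), which upgrades the telescoping bound to $|f_Q-f_{Q(0,2R)}|\lesssim\log(R/s)\cdot\varepsilon$; only then does the product $\log(R/s)\cdot(s/R)\cdot\varepsilon\lesssim\varepsilon$ come out small. Similarly, in your regime (ii) the assertion that "the smoothness plus compact support of $\varphi$ yield a comparable bound" needs justification: for cubes $Q$ with $\ell(Q)\sim R$ the oscillation of $\varphi$ alone is not obviously small, and one must instead estimate $\mathrm{osc}_Q(f-\varphi)$ directly and again feed the (b)/(c) conditions into the intermediate averages. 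So the overall architecture (mollification plus a smooth large-scale cutoff, treated in three regimes) is sound and can be made to work, but the critical estimate is not merely "technical" in the sense of routine bookkeeping — the bound you wrote down would fail, and the correct bound genuinely requires the $\VMO$ hypotheses in the telescoping rather than the raw $\BMO$ norm.
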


Let us introduce some notation. For any subset $A \subset \Rn$ and a function $f: \Rn \to \C$, denote by $f|_A$ the restriction of $f$ to $A$. For any function $f$ on $\Rn$, we set
\[
f_+ = f|_{\R^n_+} \ \text{ and } \ f_- = f|_{\R^n_-}.
\]
For any $x=(x',x_n) \in \Rn$ we set $\widetilde{x}=(x',-x_n)$. If $f$ is a function defined on $\R^n_+$, its even extension and 
zero extension defined on $\Rn$ are respectively given by 
\begin{equation*}
f_e(x) :=
\begin{cases}
f(x), & \text{ if } x \in \R^n_+, \\
f(\widetilde{x}), & \text{ if } x \in \R^n_-,
\end{cases}
\ \ \ \ 
f_z(x) :=
\begin{cases}
f(x), & \text{ if } x \in \R^n_+, \\
0, & \text{ if } x \in \R^n_-. 
\end{cases}
\end{equation*}

\begin{definition}
Let $f$ be a function on $\R^n_+$.
\begin{enumerate}
\item[(1)]  $f$ is said to be in $\BMO_r(\R^n_+)$ if there exists
$F\in \BMO(\Rn)$ such that $F|_{\R^n_+} = f$. If $f \in \BMO_r(\R^n_+)$, we set
$\|f\|_{\BMO_r(\R^n_+)} :=\inf \big\{\|F\|_{\BMO(\Rn)}: F|_{\R^n_+} = f\big\}$.
\item[(2)] $f$ is said to be in $\BMO_z(\R^n_+)$ if its zero extension $f_z$ 
belongs to $\BMO(\Rn)$. If $f \in \BMO_z(\R^n_+)$, we set
$\|f\|_{\BMO_z(\R^n_+)} := \|f_z\|_{\BMO(\Rn)}$.
\item[(3)] $f$ is said to be $\BMO_e(\R^n_+)$ if $f_e \in \BMO(\Rn)$.
Moreover, $\BMO_e(\R^n_+)$ is endowed with the norm
$\|f\|_{\BMO_e(\R^n_+)} := \|f_e\|_{\BMO(\Rn)}$.
\end{enumerate}

Similarly one can define the spaces $\BMO_r(\R^n_-)$, $\BMO_z(\R^n_-)$ and $\BMO_e(\R^n_-)$.
\end{definition}

Suppose that $\Omega$ is an open subset of $\Rn$. Define
\begin{align*}
\M(\Omega) :=\bigg\{f \in L^1_{loc}(\Omega): \exists \epsilon>0 \text{ s.t. }
\int_{\Omega} \frac{|f(x)|^2}{1+|x|^{n+\epsilon}}dx < \infty \bigg\}.
\end{align*}

\begin{definition}
We say that $f \in \M(\Omega)$ is of bounded mean oscillation associated with an operator $L$ (abbreviated as $\BMO_L(\Omega)$) if
\begin{align*}
\|f\|_{\BMO_{L}(\Omega)} := \sup_{Q} \fint_{Q} \big| f(x)-e^{-\ell(Q)^2 L} f(x) \big| dx < \infty,
\end{align*}
where the supremum is taken over all cubes $Q$ in $\Omega$.
\end{definition}

The different type $\BMO$ spaces enjoy the following properties.
\begin{proposition}[\cite{DDSY}]\label{p:BMO}
There hold that
\begin{align*}
\|f\|_{\BMO_{\Delta_{N_+}}(\R^n_+)} &\simeq \|f\|_{\BMO_e(\R^n_+)} \simeq \|f\|_{\BMO_r(\R^n_+)}, \\
\|f\|_{\BMO_{\Delta_{N_-}}(\R^n_-)} &\simeq \|f\|_{\BMO_e(\R^n_-)} \simeq \|f\|_{\BMO_r(\R^n_-)}, \\
\|f\|_{\BMO_{\Delta_N}(\Rn)} &\simeq \|f_{+,e}\|_{\BMO(\Rn)} + \|f_{-,e}\|_{\BMO(\Rn)}.
\end{align*}
\end{proposition}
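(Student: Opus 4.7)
The plan is to reduce all three equivalences to the classical identity $\BMO_\Delta(\Rn) \simeq \BMO(\Rn)$ by exploiting the explicit reflection formulas for the Neumann heat kernel already recorded in Section \ref{s:pre}. The cornerstone is the pointwise identity: for any $x \in \R^n_+$,
\[
e^{-t\Delta_{N_+}} f(x) = \int_{\R^n_+}\bigl[p_t(x,y) + p_t(x,\widetilde y)\bigr] f(y)\,dy = e^{-t\Delta} f_e(x),
\]
with the mirror statement on $\R^n_-$. Because of the Heaviside factor $H(x_n y_n)$ in the kernel of $\Delta_N$, this upgrades to the full-space decoupling
\[
e^{-t\Delta_N} f(x) = e^{-t\Delta} f_{\pm,e}(x), \qquad x \in \R^n_\pm,
\]
which is the engine behind all three lines.

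For the two half-space equivalences, the inclusion $\BMO_e \hookrightarrow \BMO_r$ is trivial since $f_e$ is one admissible extension. For $\BMO_r \hookrightarrow \BMO_e$ I would take any $F \in \BMO(\Rn)$ extending $f$ and show $\|(F|_{\R^n_+})_e\|_{\BMO(\Rn)} \lesssim \|F\|_{\BMO(\Rn)}$ by splitting cubes into those contained in a single half-space (where nothing happens) and those straddling $\{x_n=0\}$, comparing the even reflection of the lower part with $F$ on the reflected cube and using $|Q| \simeq |\widetilde Q|$. For $\BMO_{\Delta_{N_+}} \simeq \BMO_e$, the pointwise identity yields $|f - e^{-t\Delta_{N_+}} f|(x) = |f_e - e^{-t\Delta} f_e|(x)$ on $\R^n_+$, so the sup over cubes $Q \subset \R^n_+$ translates verbatim; combining this with the known equivalence $\BMO_\Delta(\Rn) \simeq \BMO(\Rn)$, and using evenness of $f_e$ to reflect any test cube $Q \subset \Rn$ crossing the hyperplane into two cubes of comparable size in $\R^n_+$, yields both directions.

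For the third equivalence I would feed the decoupling into the defining oscillation: on any $Q \subset \Rn$ with $t = \ell(Q)^2$,
\[
\fint_Q \bigl|f - e^{-t\Delta_N} f\bigr|\,dx = \frac{1}{|Q|}\int_{Q \cap \R^n_+}\bigl|f_{+,e} - e^{-t\Delta} f_{+,e}\bigr|\,dx + \frac{1}{|Q|}\int_{Q \cap \R^n_-}\bigl|f_{-,e} - e^{-t\Delta} f_{-,e}\bigr|\,dx,
\]
each summand being controlled by $\|f_{\pm,e}\|_{\BMO_\Delta(\Rn)} \simeq \|f_{\pm,e}\|_{\BMO(\Rn)}$; this gives the $\lesssim$ direction. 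For $\gtrsim$, any test cube $Q' \subset \Rn$ for $\|f_{\pm,e}\|_{\BMO(\Rn)}$ is, after reflecting its portion in the wrong half-space, covered by boundedly many cubes of comparable size in $\R^n_\pm$ whose oscillation is dominated by $\|f\|_{\BMO_{\Delta_N}(\Rn)}$. I expect the main technical nuisance to be the rigorous passage between the cube-average formulation of classical $\BMO$ and the heat-average formulation of $\BMO_L$: this known but nontrivial equivalence for $\Delta$ on $\Rn$ must be invoked with care, and it is precisely the place where the admissibility condition $f_{\pm,e} \in \M(\Rn)$ is needed to make $e^{-t\Delta} f_{\pm,e}$ meaningful and to compare heat averages with cube averages uniformly in $Q$.
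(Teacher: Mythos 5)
The paper does not supply its own proof of this proposition; it cites \cite{DDSY} and takes the three equivalences as a black box. So there is no internal argument to match against. That said, your sketch is the natural one, and it is precisely the machinery the paper \emph{does} deploy in Section \ref{s:VMO} when it upgrades this statement to the $\VMO$ level: the two reflection identities you state for $e^{-t\Delta_{N_+}}$ are exactly \eqref{e:t-1}--\eqref{e:t-2}, the full-space decoupling you derive from the Heaviside factor is \eqref{e:exp} together with the splitting in \eqref{e:Q}--\eqref{e:QQ}, and the cube-splitting into $\widehat{Q}_\pm$ for straddling cubes is \eqref{e:Q+} and the argument around \eqref{e:fe-1}--\eqref{e:fe-3}. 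So your plan is sound and consistent with the paper's technique for Theorems \ref{t:VMO-1} and \ref{t:VMO-2}.

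Two remarks on rigor. First, your passage from $\BMO_r \hookrightarrow \BMO_e$ is stated somewhat loosely: for a straddling cube $Q$ one should not try to directly compare $f_e$ on $Q$ with $F$ on $Q$, but rather (as the paper does at the $\VMO$ level) compare $f_e$ on $Q$ with $f$ on the shifted cube $\widehat{Q}_+$ lying entirely in $\R^n_+$, after controlling the discrepancy $|f_{e,Q} - f_{\widehat{Q}_+}|$; otherwise the ``reflected cube'' comparison you invoke needs a change of base point that should be spelled out. Second, as you correctly flag, the step $\BMO_\Delta(\Rn) = \BMO(\Rn)$ with equivalent norms is itself a nontrivial theorem (Duong--Yan), and the whole proposition rides on it; invoking it is legitimate, but the burden of the admissibility class $\M(\Rn)$ sits there, not in the reflection bookkeeping. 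With those two points tightened, your argument is complete and gives an honest proof of what the paper imports from \cite{DDSY}.
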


Additionally, the authors in \cite{AR} and \cite{ART} further investigated the $\BMO_r(\Omega)$, $\BMO_z(\Omega)$ and corresponding Hardy spaces if $\Omega$ is a Lipschitz domain. The local case can be found in \cite{C}. 

\section{$\VMO_{\Delta_N}(\Rn)$ space}\label{s:VMO}

\begin{definition}
We say that a function $f \in \BMO_L(\Omega)$ belongs to $\VMO_L(\Omega)$, the space of functions of vanishing mean oscillation associated with the semigroup $\{e^{-tL}\}_{t>0}$, if it satisfies the limiting conditions
\begin{align*}
\gamma_1(f;L) &:= \lim_{r \to 0} \sup_{Q \subseteq \Omega:\ell(Q) \leq r}
\bigg(\fint_{Q} |f(x)-e^{-\ell(Q)^2 L} f(x) |^2 dx \bigg)^{1/2}=0,
\\ 
\gamma_2(f;L) &:= \lim_{r \to \infty} \sup_{Q\subseteq \Omega:\ell(Q) \geq r}
\bigg(\fint_{Q} |f(x)-e^{-\ell(Q)^2 L} f(x)|^2 dx \bigg)^{1/2}=0,
\\ 
\gamma_3(f;L) &:= \lim_{r \to \infty} \sup_{Q \subseteq \Omega \cap Q(0,r)^c}
\bigg(\int_{Q} |f(x)-e^{-\ell(Q)^2 L} f(x)|^2 dx \bigg)^{1/2}=0.
\end{align*}
We endow $\VMO_L(\Omega)$ with the norm of $\BMO_L(\Omega)$.
\end{definition}

One of the main theorems in this section is to establish the relationship between the $\VMO_{\Delta_N}(\Rn)$ space and the classical $\VMO(\Rn)$ space. 
\begin{theorem}\label{t:VMO}
The $\VMO_{\Delta_N}(\Rn)$ space can be characterized in the following way:
\[
\VMO_{\Delta_N}(\Rn) = \big\{f \in \M(\Rn): f_{+,e} \in \VMO(\Rn)
\text{ and } f_{-,e} \in \VMO(\Rn) \big\}.
\]
Moreover, we have that
\begin{align*}
\|f\|_{\VMO_{\Delta_N}(\Rn)}
\simeq \|f_{+,e}\|_{\VMO(\Rn)} + \|f_{-,e}\|_{\VMO(\Rn)}.
\end{align*}
\end{theorem}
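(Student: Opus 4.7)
The plan is to combine the $\BMO$-norm equivalence of Proposition \ref{p:BMO} with Uchiyama's mean-oscillation characterization of classical $\VMO$ (Proposition \ref{p:VMO}) via a reflection identity for the Neumann heat semigroup. Since $\VMO_{\Delta_N}(\Rn)$ inherits the $\BMO_{\Delta_N}$ norm and $\VMO(\Rn)$ inherits the $\BMO$ norm, the displayed norm equivalence is immediate from Proposition \ref{p:BMO}; only the set equality needs to be verified. The cornerstone is the identity
\[
e^{-t\Delta_N}f(x) \;=\; e^{-t\Delta}f_{\pm,e}(x), \qquad x\in\R^n_{\pm},
\]
which is verified by direct computation with the kernel $p_{t,\Delta_N}$ recalled in Section \ref{s:pre} together with the change of variable $y\mapsto\widetilde{y}$ in the $\R^n_{\mp}$-part of $\int_{\Rn}p_t(x,y)f_{\pm,e}(y)\,dy$. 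In particular, for any cube $Q\subset\R^n_{\pm}$ this gives the pointwise equality $\fint_Q|f-e^{-\ell(Q)^2\Delta_N}f|^2\,dx = \fint_Q|f_{\pm,e}-e^{-\ell(Q)^2\Delta}f_{\pm,e}|^2\,dx$.

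For the direction $f_{\pm,e}\in\VMO(\Rn)\Rightarrow f\in\VMO_{\Delta_N}(\Rn)$, fix a cube $Q\subset\Rn$ and distinguish the three geometric cases $Q\subset\R^n_+$, $Q\subset\R^n_-$, or $Q$ straddling $\{x_n=0\}$. In the first two cases the reflection identity converts the $\Delta_N$-oscillation on $Q$ into a classical $\Delta$-oscillation of $f_{\pm,e}$, to which I apply the standard Gaussian-bound majorization
\[
\fint_Q|g-e^{-\ell(Q)^2\Delta}g|^2\,dx \,\lesssim\, \sum_{k\ge 0}2^{-k\varepsilon}\fint_{2^kQ}|g-g_{2^kQ}|^2\,dx
\]
with $g=f_{\pm,e}$; Uchiyama's conditions (Proposition \ref{p:VMO}) then annihilate the right-hand side in each of the three limiting regimes defining $\gamma_1,\gamma_2,\gamma_3$. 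For straddling $Q$, the semigroup decomposition in \eqref{e:exp} splits $\int_Q=\int_{Q\cap\R^n_+}+\int_{Q\cap\R^n_-}$, and each half-space piece is handled by the preceding argument.

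For the reverse implication I invoke the known equivalence $\VMO(\Rn)=\VMO_\Delta(\Rn)$ for the standard Laplacian, so it suffices to check $\gamma_i(f_{+,e};\Delta)=0$ for $i=1,2,3$ (the case of $f_{-,e}$ being symmetric), again splitting cubes $Q\subset\Rn$ into the same three subcases. If $Q\subset\R^n_+$, the reflection identity directly transfers the $\Delta$-oscillation of $f_{+,e}$ on $Q$ to the $\Delta_N$-oscillation of $f$ on $Q$. If $Q\subset\R^n_-$, the evenness of $f_{+,e}$ combined with the symmetry $p_t(\widetilde{x},\widetilde{y})=p_t(x,y)$ reduces the question to the reflected cube $\widetilde{Q}\subset\R^n_+$. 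If $Q$ straddles, writing $Q_{\pm}:=Q\cap\R^n_{\pm}$ and reflecting the $Q_-$-contribution through the identity of the first paragraph yields
\[
\int_Q|f_{+,e}-e^{-t\Delta}f_{+,e}|^2 \,=\, \int_{Q_+}|f-e^{-t\Delta_N}f|^2 + \int_{\widetilde{Q_-}}|f-e^{-t\Delta_N}f|^2 \,\le\, 2\int_{Q^*}|f-e^{-t\Delta_N}f|^2,
\]
where $Q^*\subset\R^n_+$ is the cube of side $\ell(Q)$ containing $Q_+\cup\widetilde{Q_-}$.

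The main obstacle is this straddling case: $Q^*$ must be chosen so that smallness, largeness, and remoteness of $Q$ translate faithfully to $Q^*$. Since $Q^*$ can be taken with $\ell(Q^*)=\ell(Q)$ and its center within distance $\ell(Q)$ of that of $Q$, each of the three Uchiyama regimes for $Q$ maps into the same regime for $Q^*\subset\R^n_+$, so the $\gamma_i$-vanishing conditions on $f$-oscillations over cubes in $\R^n_+$ (guaranteed by $f\in\VMO_{\Delta_N}(\Rn)$) do transfer to the corresponding conditions for $f_{+,e}$ over arbitrary cubes in $\Rn$, completing the set equality.
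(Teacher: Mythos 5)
Your proposal is correct and rests on the same reflection identity and the same three-case geometric analysis that the paper uses, but you organize the argument differently. The paper derives Theorem~\ref{t:VMO} by composing two intermediate results: Theorem~\ref{t:VMO-1} reduces $\VMO_{\Delta_N}(\Rn)$ to $\VMO_{\Delta_{N_\pm}}(\R^n_\pm)$ on the half-spaces using \eqref{e:exp} and the auxiliary rectangles $\widehat{Q}_\pm$ of \eqref{e:Q+}, and Theorem~\ref{t:VMO-2} then identifies $\VMO_{\Delta_{N_+}}(\R^n_+)$ with $\VMO_e(\R^n_+)$ via \eqref{e:t-1}--\eqref{e:t-2} and the equivalence $\VMO(\Rn)=\VMO_\Delta(\Rn)$ from \cite{DDSTY}. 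You compress this into a single pass, going directly from the identity $e^{-t\Delta_N}f=e^{-t\Delta}f_{\pm,e}$ on $\R^n_\pm$ to the comparison with $\VMO(\Rn)$; this skips the intermediate half-space $\VMO_{\Delta_{N_\pm}}$ spaces entirely (which the paper also wants for their own sake, e.g.\ for Theorem~\ref{t:CCC}) and is leaner for the purpose of Theorem~\ref{t:VMO} alone. You also diverge in one direction by replacing the appeal to $\VMO=\VMO_\Delta$ with an explicit Gaussian-tail majorization of the semigroup oscillation by dyadic mean oscillations; this is fine, though it then requires the mild cutoff argument you do not spell out for the $\gamma_1$ and $\gamma_3$ regimes (large $k$ in the dyadic sum is controlled by $2^{-k\varepsilon}\|g\|_{\BMO}^2$, small $k$ by the vanishing of the Uchiyama quantity). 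For the straddling cubes in the reverse direction you use an enclosing cube $Q^*\subset\R^n_+$ of the same sidelength containing $Q_+\cup\widetilde{Q_-}$, whereas the paper uses the rectangle $\widehat{Q}_+$ of the same sidelength and volume; the two are interchangeable, and your observation that the Uchiyama regime of $Q$ transfers to that of $Q^*$ because $\ell(Q^*)=\ell(Q)$ and the centers are within $O(\ell(Q))$ is exactly the point that makes the reduction go through. One minor imprecision: in your forward straddling case, "each half-space piece is handled by the preceding argument" should be read as first enlarging $\int_{Q\cap\R^n_\pm}$ to $\int_Q$ of the even extension before applying the majorization, since $Q\cap\R^n_\pm$ is not itself a cube; with that understood the proof is sound.
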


To understand the $\VMO_{\Delta_N}(\Rn)$ space well, let us describe it in terms of $\VMO$ spaces on the upper/lower half-spaces. 
\begin{theorem}\label{t:VMO-1}
The $\VMO_{\Delta_N}(\Rn)$ space can be described as
\[
\VMO_{\Delta_N}(\Rn) = \big\{f \in \M(\Rn): f_+ \in \VMO_{\Delta_{N_+}}(\R^n_+)
\text{ and } f_- \in \VMO_{\Delta_{N_-}}(\R^n_-) \big\}.
\]
Moreover, we have that
\[
\|f\|_{\VMO_{\Delta_N}(\Rn)} \simeq
\|f_+\|_{\VMO_{\Delta_{N_+}}(\R^n_+)} + \|f_-\|_{\VMO_{\Delta_{N_-}}(\R^n_-)}.
\]
\end{theorem}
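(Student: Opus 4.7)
The plan is to deduce Theorem~\ref{t:VMO-1} by chaining Theorem~\ref{t:VMO} with a half-space equivalence that identifies $\VMO_{\Delta_{N_+}}(\R^n_+)$ (resp. $\VMO_{\Delta_{N_-}}(\R^n_-)$) with the classical $\VMO(\Rn)$ applied to the even extension. In other words, I would prove
\begin{equation*}
\|f_+\|_{\VMO_{\Delta_{N_+}}(\R^n_+)} \simeq \|f_{+,e}\|_{\VMO(\Rn)}, \qquad \|f_-\|_{\VMO_{\Delta_{N_-}}(\R^n_-)} \simeq \|f_{-,e}\|_{\VMO(\Rn)},
\end{equation*}
and then substitute these equivalences into Theorem~\ref{t:VMO} to obtain both the set equality and the norm equivalence stated in Theorem~\ref{t:VMO-1}.

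The cornerstone of the half-space equivalence is the pointwise identity
\begin{equation*}
e^{-t \Delta_{N_+}} f_+(x) = e^{-t \Delta} f_{+,e}(x), \qquad x \in \R^n_+,
\end{equation*}
which drops out of the explicit kernel formula $p_{t,\Delta_{N_+}}(x,y) = p_t(x,y) + p_t(x,\widetilde{y})$ after folding the reflected integral on $\R^n_+$ into a single integral of $f_{+,e}$ over $\Rn$. Combined with the analogous identity on $\R^n_-$, this forces, for every cube $Q \subseteq \R^n_+$,
\begin{equation*}
\fint_Q \bigl|f_+(x) - e^{-\ell(Q)^2 \Delta_{N_+}} f_+(x)\bigr|^2 dx = \fint_Q \bigl|f_{+,e}(x) - e^{-\ell(Q)^2 \Delta} f_{+,e}(x)\bigr|^2 dx.
\end{equation*}
Since the supremum over $Q \subseteq \R^n_+$ is dominated by the supremum over all $Q \subseteq \Rn$, each of the three quantities $\gamma_i(f_+;\Delta_{N_+})$ is controlled by $\gamma_i(f_{+,e};\Delta)$, giving the ``$\lesssim$'' direction at once.

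For the reverse inequality, I would take an arbitrary cube $Q \subseteq \Rn$ and distinguish cases. If $Q \subseteq \R^n_\pm$, the identity above applies verbatim. If $Q$ straddles the hyperplane $\{x_n = 0\}$, I would split $Q$ into $Q^+ := Q \cap \R^n_+$ and $Q^- := Q \cap \R^n_-$, and use the symmetry $f_{+,e}(x) = f_{+,e}(\widetilde{x})$ to show that the mean-oscillation quantity of $f_{+,e}$ over $Q$ is controlled by the corresponding quantity for $f_+$ over a single cube in $\R^n_+$ of side length comparable to $\ell(Q)$ (the natural candidate being a translate of $Q$ lying just above the hyperplane). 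Combining this with Proposition~\ref{p:BMO} for the underlying $\BMO$ control of oscillation differences, I would conclude that all three limits $\gamma_1, \gamma_2, \gamma_3$ of $f_{+,e}$ on $\Rn$ vanish whenever those of $f_+$ on $\R^n_+$ do.

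I expect the main obstacle to be the straddling-cube case in this reverse direction, particularly for $\gamma_3$: one must verify that a cube lying far from the origin but crossing $\{x_n = 0\}$ can be safely compared to a cube lying inside $\R^n_+$ (or $\R^n_-$) that is itself far from the origin, which requires a careful translation/reflection argument and a uniform bound on the ratio $|Q^+|/|Q|$ or $|Q^-|/|Q|$ — one side can a priori be very thin. After that, Theorem~\ref{t:VMO} closes the argument directly.
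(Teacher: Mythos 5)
Your plan has a circularity problem inside the logical architecture of the paper: you propose to prove Theorem~\ref{t:VMO-1} by combining Theorem~\ref{t:VMO} with the half-space equivalence $\|f_\pm\|_{\VMO_{\Delta_{N_\pm}}} \simeq \|f_{\pm,e}\|_{\VMO(\Rn)}$, but in the paper Theorem~\ref{t:VMO} is itself \emph{deduced} from Theorem~\ref{t:VMO-1} together with Theorem~\ref{t:VMO-2} (the sentence ``Theorem~\ref{t:VMO} immediately follows from Theorems~\ref{t:VMO-1} and~\ref{t:VMO-2}'' makes the dependency explicit). So you are using a corollary to prove the lemma that produces it. For the argument to close, you would need an independent direct proof of Theorem~\ref{t:VMO}, which your sketch does not supply, and which in practice would have to merge exactly the content of Theorems~\ref{t:VMO-1} and~\ref{t:VMO-2}, so nothing is saved.

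There is also a mismatch of target: the technical work you sketch, namely the pointwise identities $e^{-t\Delta_{N_+}}f_+ = e^{-t\Delta}f_{+,e}$ on $\R^n_+$ and its reflection $e^{-t\Delta}f_{+,e}(x)=e^{-t\Delta_{N_+}}f_+(\widetilde x)$ on $\R^n_-$, together with the straddling-cube reduction via a cube pushed to one side of the hyperplane, is precisely Step~1 of the proof of Theorem~\ref{t:VMO-2}, not a proof of Theorem~\ref{t:VMO-1}. The paper's proof of Theorem~\ref{t:VMO-1} is shorter and more elementary: it uses only the semigroup restriction identities $(e^{-t\Delta_N}f)_\pm = e^{-t\Delta_{N_\pm}}f_\pm$ from \eqref{e:exp}, splits the oscillation integral over $Q$ into the pieces on $Q\cap\R^n_\pm$ as in \eqref{e:Q}--\eqref{e:QQ}, and for straddling cubes enlarges $Q\cap\R^n_\pm$ to the half-space cubes $\widehat Q_\pm$ of side $\ell(Q)$ defined in \eqref{e:Q+}; no even extension and no comparison to the classical Laplacian semigroup is needed. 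That enlargement is the same device you identified as the way around the thin-slice difficulty, so the obstacle you flagged is indeed the right one, but the cleanest resolution is to apply it directly at the level of $\Delta_{N_\pm}$ rather than detouring through $\VMO(\Rn)$ and $f_{\pm,e}$.
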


\begin{proof}
Recall that
\begin{align*}
\big(e^{-t\Delta_N}f \big)_+ = e^{-t \Delta_{N_+}} f_+ \ \ \text{ and } \ \
\big(e^{-t\Delta_N}f \big)_- = e^{-t \Delta_{N_-}} f_-.
\end{align*}
Then for any cube $Q \subset \Rn$ we have
\begin{align}\label{e:Q}
&\fint_Q  |f(x)-e^{-\ell(Q)^2 \Delta_N}f(x) \big| dx
\nonumber \\ 
&=\frac{1}{|Q|} \int_{Q \cap \R^n_+} |f_+(x)-e^{-\ell(Q)^2 \Delta_{N_+}}f_+(x)| dx
\nonumber \\ 
&\quad+ \frac{1}{|Q|}\int_{Q \cap \R^n_-} |f_-(x)-e^{-\ell(Q)^2 \Delta_{N_-}}f_-(x)| dx,
\end{align}
and
\begin{align}\label{e:QQ}
&\bigg(\fint_Q  |f(x)-e^{-\ell(Q)^2 \Delta_N}f(x)|^2 dx\bigg)^{\frac12}
\nonumber\\ 
&=\bigg(\frac{1}{|Q|} \int_{Q \cap \R^n_+} |f_+(x)-e^{-\ell(Q)^2 \Delta_{N_+}}f_+(x)|^2 dx
\nonumber\\ 
&\quad+ \frac{1}{|Q|}\int_{Q \cap \R^n_-} |f_-(x)-e^{-\ell(Q)^2 \Delta_{N_-}}f_-(x)|^2 dx\bigg)^{\frac12}
\nonumber\\ 
&\simeq \bigg(\frac{1}{|Q|} \int_{Q \cap \R^n_+} |f_+(x)-e^{-\ell(Q)^2 \Delta_{N_+}}f_+(x)|^2 dx\bigg)^{\frac12}
\nonumber\\ 
&\quad+ \bigg(\frac{1}{|Q|} \int_{Q \cap \R^n_-} |f_-(x)-e^{-\ell(Q)^2 \Delta_{N_-}}f_-(x)|^2 dx\bigg)^{\frac12}.
\end{align}
If $f \in \VMO_{\Delta_N}(\Rn)$, then \eqref{e:Q} and \eqref{e:QQ} respectively indicate that
\[
\|f_+\|_{\BMO_{\Delta_{N_+}}(\R^n_+)} + \|f_-\|_{\BMO_{\Delta_{N_-}}(\R^n_-)}
\lesssim \|f\|_{\BMO_{\Delta_N}(\Rn)},
\]
and
\[
\gamma_i(f_+;\Delta_{N_+}) + \gamma_i(f_-;\Delta_{N_-})
\lesssim \gamma_i(f;\Delta_N)=0, \ \ i=1,2,3.
\]
This shows $f_+ \in \VMO_{\Delta_{N_+}}(\R^n_+)$ and
$f_- \in \VMO_{\Delta_{N_-}}(\R^n_-)$ with
\begin{align}\label{e:V-1}
\|f_+\|_{\VMO_{\Delta_{N_+}}(\R^n_+)} + \|f_-\|_{\VMO_{\Delta_{N_-}}(\R^n_-)}
\lesssim \|f\|_{\VMO_{\Delta_N}(\Rn)}.
\end{align}

Let us demonstrate the another direction. Assume now that $f \in \M(\Rn)$ such that $f_+ \in \VMO_{\Delta_{N_+}}(\R^n_+)$ and $f_- \in \VMO_{\Delta_{N_-}}(\R^n_-)$. Hence, there hold that $f_+ \in \BMO_{\Delta_{N_+}}(\R^n_+)$, $f_- \in \BMO_{\Delta_{N_-}}(\R^n_-)$, and
\[
\gamma_i(f_+;\Delta_{N_+}) = \gamma_i(f_-;\Delta_{N_-})=0, \ \ i=1,2,3.
\]

Let $Q \subset \Rn$ be an arbitrary cube. Let us analyze the position of $Q$. If $Q \subset \R^n_+$, then it yields that
\begin{align*}
\fint_{Q} |f-e^{-\ell(Q)^2 \Delta_N}f| dx
=\fint_{Q} |f_+-e^{-\ell(Q)^2 \Delta_{N_+}}f_+| dx
\leq \|f_+\|_{\BMO_{\Delta_{N_+}}(\R^n_+)},
\end{align*}
and
\begin{align*}
\bigg(\fint_{Q} |f-e^{-\ell(Q)^2 \Delta_N}f|^2 dx\bigg)^{\frac12}
=\bigg(\fint_{Q} |f_+ -e^{-\ell(Q)^2 \Delta_{N_+}}f_+|^2 dx\bigg)^{\frac12}.
\end{align*}
Similarly, if $Q \subset \R^n_-$, we have
\begin{align*}
\fint_{Q} |f-e^{-\ell(Q)^2 \Delta_N}f| dx
\leq \|f_-\|_{\BMO_{\Delta_{N_-}}(\R^n_-)},
\end{align*}
and
\begin{align*}
\bigg(\fint_{Q} |f-e^{-\ell(Q)^2 \Delta_N}f|^2 dx\bigg)^{\frac12}
=\bigg(\fint_{Q} |f_- -e^{-\ell(Q)^2 \Delta_{N_-}}f_-|^2 dx\bigg)^{\frac12}.
\end{align*}

If $Q \cap \R^n_+ \neq \emptyset$ and $Q \cap \R^n_- \neq \emptyset$, then we define
\begin{equation}\label{e:Q+}
\begin{split}
\widehat{Q}_+ &:=\{x=(x',x_n): x' \in Q \cap \R^{n-1},\ 0 < x_n \leq \ell(Q)\},
\\ 
\widehat{Q}_- &:=\{x=(x',x_n): x' \in Q \cap \R^{n-1},\ -\ell(Q) < x_n \leq 0\}.
\end{split}
\end{equation}
Observe that $Q \cap \R^n_+ \subset \widehat{Q}_+ \subset \R^n_+$, $Q \cap \R^n_- \subset \widehat{Q}_- \subset \R^n_-$ and
$|\widehat{Q}_+|=|\widehat{Q}_-|=|Q|$. Invoking \eqref{e:Q} and \eqref{e:QQ}, we obtain that
\begin{align*}
\fint_{Q} |f-e^{-\ell(Q)^2 \Delta_N}f| dx
&\leq \frac{1}{|\widehat{Q}_+|} \int_{\widehat{Q}_+}
|f_+ -e^{-\ell(\widehat{Q}_+)^2 \Delta_{N_+}}f_+| dx
\\ 
&\quad+ \frac{1}{|\widehat{Q}_-|} \int_{\widehat{Q}_-}
|f_- -e^{-\ell(\widehat{Q}_-)^2 \Delta_{N_-}}f_-| dx
\\ 
&\leq \|f_+\|_{\BMO_{\Delta_{N_+}}(\R^n_+)} + \|f_-\|_{\BMO_{\Delta_{N_-}}(\R^n_-)},
\end{align*}
and
\begin{align*}
\bigg(\fint_{Q} |f -e^{-\ell(Q)^2 \Delta_N}f|^2 dx\bigg)^{\frac12}
&\lesssim \bigg(\fint_{\widehat{Q}_+} |f_+ -e^{-\ell(\widehat{Q}_+)^2 \Delta_{N_+}}f_+|^2 dx\bigg)^{\frac12}
\\ 
&\quad+ \bigg(\fint_{\widehat{Q}_-} |f_- -e^{-\ell(\widehat{Q}_-)^2 \Delta_{N_-}}f_-|^2 dx\bigg)^{\frac12}.
\end{align*}
Collecting the above estimates, we deduce that
\[
\|f\|_{\BMO_{\Delta_N}(\Rn)} \lesssim
\|f_+\|_{\BMO_{\Delta_{N_+}}(\R^n_+)} + \|f_-\|_{\BMO_{\Delta_{N_-}}(\R^n_-)},
\]
and
\[
\gamma_i(f;\Delta_N) \lesssim
\gamma_i(f_+;\Delta_{N_+}) + \gamma_i(f_-;\Delta_{N_-})=0, \ \ i=1,2,3.
\]
This proves that $f \in \VMO_{\Delta_N}(\Rn)$ with
\begin{align}\label{e:V-2}
\|f\|_{\VMO_{\Delta_N}(\Rn)}
\lesssim \|f_+\|_{\VMO_{\Delta_{N_+}}(\R^n_+)} + \|f_-\|_{\VMO_{\Delta_{N_-}}(\R^n_-)}.
\end{align}

Consequently, the desired result follows from the inequalities \eqref{e:V-1} and \eqref{e:V-2}.
\end{proof}

Let us introduce several types of $\VMO$ spaces on the half-spaces. 
\begin{definition}\label{d:VMO}
Let $f$ be a function on $\R^n_+$.
\begin{enumerate}
\item[(1)]  $f$ is said to be in $\VMO_r(\R^n_+)$ if there exists
$F\in \VMO(\Rn)$ such that $F|_{\R^n_+} = f$. If $f \in \VMO_r(\R^n_+)$, we set
$\|f\|_{\VMO_r(\R^n_+)} :=\inf \big\{\|F\|_{\VMO(\Rn)}: F|_{\R^n_+} = f\big\}$.
\item[(2)] $f$ is said to be in $\VMO_z(\R^n_+)$ if the function $f_z$ belongs to
$\VMO(\Rn)$. If $f \in \VMO_z(\R^n_+)$, we set
$\|f\|_{\VMO_z(\R^n_+)} := \|f_z\|_{\VMO(\Rn)}$.
\item[(3)] $f$ is said to be $\VMO_e(\R^n_+)$ if $f_e \in \VMO(\Rn)$.
Moreover, $\VMO_e(\R^n_+)$ is endowed with the norm
$\|f\|_{\VMO_e(\R^n_+)} := \|f_e\|_{\VMO(\Rn)}$.
\end{enumerate}

Similarly one can define the spaces $\VMO_r(\R^n_-)$, $\VMO_z(\R^n_-)$ and $\VMO_e(\R^n_-)$.
\end{definition}

\begin{theorem}\label{t:VMO-2}
The spaces $\VMO_{\Delta_{N_+}}(\R^n_+)$, $\VMO_e(\R^n_+)$ and $\VMO_r(\R^n_+)$ coincide, with equivalent norms
\[
\|f\|_{\VMO_{\Delta_{N_+}}(\R^n_+)}
\simeq \|f\|_{\VMO_e(\R^n_+)}
\simeq \|f\|_{\VMO_r(\R^n_+)}.
\]
Similar results hold for $\VMO_{\Delta_{N_-}}(\R^n_-)$, $\VMO_e(\R^n_-)$ and $\VMO_r(\R^n_-)$.
\end{theorem}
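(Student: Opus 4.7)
The plan is to prove the two equivalences $\VMO_{\Delta_{N_+}}(\R^n_+) = \VMO_e(\R^n_+)$ and $\VMO_e(\R^n_+) = \VMO_r(\R^n_+)$ separately. The $\BMO$-level analogue (Proposition \ref{p:BMO}) is already at our disposal, so the real content lies in transferring the vanishing conditions $\gamma_1, \gamma_2, \gamma_3$ from one side to the other. Throughout I would analyze cubes $Q \subset \Rn$ by cases ($Q \subseteq \R^n_+$, $Q \subseteq \R^n_-$, or $Q$ straddling the hyperplane $\{x_n=0\}$); the straddling case would be handled via the auxiliary cubes $\widehat{Q}_\pm$ introduced in \eqref{e:Q+}, exactly as in the proof of Theorem \ref{t:VMO-1}.

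The first equivalence rests on the pointwise identity
\begin{equation*}
e^{-t\Delta_{N_+}} f(x) = e^{-t\Delta} f_e(x), \qquad x \in \R^n_+,
\end{equation*}
which is an immediate consequence of the reflection formula for $p_{t,\Delta_{N_+}}$. If $Q \subseteq \R^n_+$, this identity gives
\begin{equation*}
\bigg(\fint_Q |f - e^{-\ell(Q)^2 \Delta_{N_+}} f|^2 dx\bigg)^{1/2} = \bigg(\fint_Q |f_e - e^{-\ell(Q)^2 \Delta} f_e|^2 dx\bigg)^{1/2},
\end{equation*}
so $\gamma_i(f;\Delta_{N_+})$ is dominated by $\gamma_i(f_e;\Delta)$. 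Conversely, to control $\gamma_i(f_e;\Delta)$ by $\gamma_i(f;\Delta_{N_+})$, I would handle cubes $Q \subseteq \R^n_-$ via the reflection $Q \mapsto \widetilde{Q} \subseteq \R^n_+$, using the symmetry $p_t(\widetilde{x},\widetilde{y}) = p_t(x,y)$ together with the evenness of $f_e$, while straddling cubes are dealt with by the $\widehat{Q}_\pm$-reduction from the proof of Theorem \ref{t:VMO-1}. To pass between the semigroup-difference and cube-oscillation characterizations of $\VMO(\Rn)$ for the classical Laplacian I would invoke the equivalence recorded in Proposition \ref{p:VMO} (and its semigroup analogue available for $\Delta$).

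For the second equivalence, the inclusion $\VMO_e(\R^n_+) \subseteq \VMO_r(\R^n_+)$ is immediate by taking $F = f_e$ as the extension. For the reverse direction, given any $F \in \VMO(\Rn)$ with $F|_{\R^n_+} = f$, I would verify conditions (a)-(c) of Proposition \ref{p:VMO} for $f_e$ by working directly on cube averages. For $Q \subseteq \R^n_+$ the oscillation of $f_e$ on $Q$ coincides with that of $F$ on $Q$; for $Q \subseteq \R^n_-$ the reflection $x \mapsto \widetilde{x}$ converts it to the oscillation of $F$ on $\widetilde{Q} \subseteq \R^n_+$; for straddling $Q$ I would take the constant $a_Q = F_{\widehat{Q}_+}$, estimate each half of $Q$ by the triangle inequality together with $Q \cap \R^n_\pm \subseteq \widehat{Q}_\pm$, and rewrite the $\widehat{Q}_-$ integral via reflection onto $\widehat{Q}_+$ using the evenness of $f_e$, producing a uniform bound by $\fint_{\widehat{Q}_+} |F - F_{\widehat{Q}_+}| dx$. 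In every case the resulting bound is controlled by $\gamma_i$-quantities of $F$, which vanish since $F \in \VMO(\Rn)$; for condition (c) one also uses that $\widehat{Q}_+ \subset Q(0,r)^c$ whenever $Q \subset Q(0,r)^c$ straddles, which follows from the fact that the $x'$-projection of such a $Q$ must lie outside the corresponding $(n-1)$-dimensional cube.

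The hard part will be this last step: the arbitrary extension $F$ need not bear any relation to the even extension $f_e$ on $\R^n_-$, so one cannot simply quote the $\BMO$ equivalences. The reflection trick $\widehat{Q}_- \leftrightarrow \widehat{Q}_+$ is the crucial device that rewrites the oscillation of $f_e$ on straddling and on lower cubes entirely in terms of oscillations of $F$ on cubes of comparable size and location in $\R^n_+$, thereby transferring $\gamma_i(F)=0$ to $\gamma_i(f_e)=0$. The parallel conclusions for $\VMO_{\Delta_{N_-}}(\R^n_-)$, $\VMO_e(\R^n_-)$ and $\VMO_r(\R^n_-)$ then follow by the symmetric reflection $x_n \mapsto -x_n$.
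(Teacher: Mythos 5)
Your proposal is correct and follows essentially the same route as the paper: both equivalences are proved by case analysis on the position of $Q$, using the reflection identities $e^{-t\Delta}f_e = e^{-t\Delta_{N_+}}f$ on $\R^n_+$ (and its counterpart on $\R^n_-$) together with the $\widehat{Q}_\pm$ reduction for straddling cubes, and then the bridge $\VMO_\Delta(\Rn)=\VMO(\Rn)$ from Theorem \ref{t:VB}. The only cosmetic difference is that in the straddling case of Step 2 you compare against $a_Q=F_{\widehat{Q}_+}$ directly, while the paper first uses $f_{e,Q}$ and then bounds $|f_{e,Q}-f_{\widehat{Q}_+}|$; these are equivalent up to a factor of $2$.
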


\begin{proof}
We will present the proof in two steps.

\smallskip\noindent{\bf Step 1: $\VMO_{\Delta_{N_+}}(\R^n_+)=\VMO_e(\R^n_+)$}. We first present two fundamental identities:
\begin{align}
\label{e:t-1} e^{-t \Delta} f_e(x) &= e^{-t \Delta_{N_+}} f(x), \ x \in \R^n_+, t>0; \\
\label{e:t-2} e^{-t \Delta} f_e(x) &= e^{-t \Delta_{N_+}} f(\widetilde{x}), \ x \in \R^n_-, t>0.
\end{align}
We only prove \eqref{e:t-2}. Note that $p_{t,\Delta_{N_+}}(x,y)=p_t(x,y)+p_t(x,\widetilde{y})$ and
\[
p_t(x,\widetilde{y}) = p_t(\widetilde{x},y), \ \
p_t(\widetilde{x},\widetilde{y}) = p_t(x,y).
\]
Hence, we have for any $x \in \R^n_-$ and $t>0$
\begin{align*}
e^{-t \Delta_{N_+}} f(\widetilde{x})
&=\int_{\R^n_+} p_{t,\Delta_{N_+}}(\widetilde{x},y) f(y) dy
\\ 
&=\int_{\R^n_+} p_{t}(\widetilde{x},y) f(y) dy
+ \int_{\R^n_+} p_{t}(\widetilde{x},\widetilde{y}) f(y) dy
\\ 
&=\int_{\R^n_+} p_{t}(x,\widetilde{y}) f(y) dy
+ \int_{\R^n_+} p_{t}(x,y) f(y) dy
\\ 
&=\int_{\R^n_-} p_{t}(x,y) f(\widetilde{y}) dy
+ \int_{\R^n_+} p_{t}(x,y) f(y) dy
\\ 
&= \int_{\Rn} p_{t}(x,y) f_e(y) dy
=e^{-t \Delta} f_e(x).
\end{align*}

Now we show $\VMO_e(\R^n_+) \subseteq \VMO_{\Delta_{N_+}}(\R^n_+)$. Let $f \in \VMO_e(\R^n_+)$. Then it yields that $f_e \in \VMO(\Rn)=\VMO_{\Delta}(\Rn)$, where the equivalence will be proved in Theorem \ref{t:VB}. Furthermore, we obtain that $f_e \in \BMO_{\Delta}(\Rn)=\BMO(\Rn)$ and $\gamma_j(f_e;\Delta)=0$, $j=1,2,3$. Together with Proposition \ref{p:BMO}, the former gives that $f \in \BMO_e(\R^n_+)=\BMO_{\Delta_{N_+}}(\R^n_+)$ with $\|f\|_{\BMO_e(\R^n_+)} \simeq \|f\|_{\BMO_{\Delta_{N_+}}(\R^n_+)}$. Thus, to prove $f \in \VMO_{\Delta_{N_+}}(\R^n_+)$, it is enough to show $\gamma_j(f;\Delta_{N_+})=0$, $j=1,2,3$. For any $Q \subseteq \R^n_+$, it follows from \eqref{e:t-1} that
\begin{align*}
\bigg(\fint_{Q} |f-e^{-\ell(Q)^2 \Delta_{N_+}}f|^2 dx\bigg)^{\frac12}
=\bigg(\fint_{Q} |f_e -e^{-\ell(Q)^2 \Delta}f_e|^2 dx\bigg)^{\frac12}.
\end{align*}
Therefore, $\gamma_j(f_e;\Delta)=0$ implies that $\gamma_j(f;\Delta_{N_+})=0$, $j=1,2,3$.

Next let us demonstrate $\VMO_{\Delta_{N_+}}(\R^n_+) \subseteq \VMO_e(\R^n_+)$. Assume that $f \in \VMO_{\Delta_{N_+}}(\R^n_+)$, which is equivalent to $f \in \BMO_{\Delta_{N_+}}(\R^n_+)=\BMO_e(\R^n_+)$ and $\gamma_j(f;\Delta_{N_+})=0$, $j=1,2,3$. Thus we have $f_e \in \BMO(\Rn)=\BMO_{\Delta}(\Rn)$. We will show that $\gamma_j(f_e;\Delta)=0$, $j=1,2,3$.

Let $Q \subseteq \Rn$ be an arbitrary cube. If $Q \subseteq \R^n_+$, then the equation \eqref{e:t-1} gives that
\begin{align*}
\bigg(\fint_Q  |f_e-e^{-\ell(Q)^2 \Delta}f_e|^2 dx\bigg)^{\frac12}
=\bigg(\fint_Q  |f-e^{-\ell(Q)^2 \Delta_{N_+}}f|^2 dx\bigg)^{\frac12}.
\end{align*}

If $Q \subseteq \R^n_-$, then $x \in Q$ indicates that $\widetilde{x} \in \widetilde{Q} \subseteq \R^n_+$, where
\[
\widetilde{Q}=\{(x',x_n) \in \Rn;(x',-x_n) \in Q\}.
\]
It follows from \eqref{e:t-2} that
\begin{align*}
&\bigg(\fint_Q  |f_e(x)-e^{-\ell(Q)^2\Delta}f_e(x)|^2 dx\bigg)^{\frac12}
\\ 
&=\bigg(\fint_Q  |f(\widetilde{x})-e^{-\ell(Q)^2
\Delta_{N_+}}f(\widetilde{x}) |^2 dx\bigg)^{\frac12}
\\ 
&=\bigg(\fint_{\widetilde{Q}}
|f(x)-e^{-\ell(Q)^2 \Delta_{N_+}}f(x) |^2 dx\bigg)^{\frac12}.
\end{align*}

If $Q \cap \R^n_+ \neq \emptyset$ and $Q \cap \R^n_- \neq \emptyset$, then we still use the same notation as \eqref{e:Q+}.
Applying \eqref{e:t-1} and \eqref{e:t-2} again, we obtain that
\begin{align*}
&\bigg(\fint_Q  |f_e(x)-e^{-\ell(Q)^2\Delta}f_e(x)|^2 dx\bigg)^{\frac12}
\\ 
&\lesssim \bigg(\frac{1}{|\widehat{Q}_+|} \int_{\widehat{Q}_+}
|f_e(x)-e^{-\ell(Q)^2 \Delta}f_e(x)|^2 dx\bigg)^{\frac12}
\\ 
&\quad+ \bigg(\frac{1}{|\widehat{Q}_+|} \int_{\widehat{Q}_-}
|f_e(x)-e^{-\ell(Q)^2 \Delta}f_e(x)|^2 dx\bigg)^{\frac12}
\\ 
&= \bigg(\frac{1}{|\widehat{Q}_+|} \int_{\widehat{Q}_+}
|f(x)-e^{-\ell(Q)^2 \Delta_{N_+}}f(x)|^2 dx\bigg)^{\frac12}
\\ 
&\quad+ \bigg(\frac{1}{|\widehat{Q}_+|} \int_{\widehat{Q}_-}
|f(\widetilde{x})-e^{-\ell(Q)^2 \Delta_{N_+}}f(\widetilde{x})|^2 dx\bigg)^{\frac12}
\\ 
&= 2 \bigg(\frac{1}{|\widehat{Q}_+|} \int_{\widehat{Q}_+}
|f(x)-e^{-\ell(Q)^2 \Delta_{N_+}}f(x)|^2 dx\bigg)^{\frac12}.
\end{align*}
Consequently, $\gamma_j(f;\Delta_{N_+})=0$ implies that $\gamma_j(f_e;\Delta)=0$, $j=1,2,3$. This proves that $f_e \in \VMO_{\Delta}(\Rn)$. Additionally, it follows from Theorem \ref{t:VB} that $f_e \in \VMO(\Rn)$, which concludes that $f \in \VMO_e(\R^n_+)$.

\smallskip\noindent{\bf Step 2: $\VMO_e(\R^n_+)=\VMO_r(\R^n_+)$}. The first direction $\VMO_e(\R^n_+) \subseteq \VMO_r(\R^n_+)$ is easy. Indeed, for any $f \in \VMO_e(\R^n_+)$, there holds $f_e \in \VMO(\Rn)$. Thus, the fact $f_e|_{\R^n_+}=f$ gives that
\[
\|f\|_{\VMO_r(\R^n_+)} \leq \|f_e\|_{\VMO(\Rn)} = \|f\|_{\VMO_e(\R^n_+)}.
\]
It is enough to show that $\VMO_r(\R^n_+) \subseteq \VMO_e(\R^n_+)$.

Let $f \in \VMO_r(\R^n_+)$. For any $F \in \VMO(\Rn)$ with $F|_{\R^n_+}=f$, we have $F \in \BMO(\Rn)$ and $\gamma_j(F)=0$, $j=1,2,3$. Thus, in order to show $f \in \VMO_e(\R^n_+)$, it suffices to prove $f_e \in \BMO(\Rn)$ and $\gamma_j(f_e)=0$, $j=1,2,3$.

Let $1 \leq p < \infty$. Let $Q \subseteq \Rn$ be an arbitrary cube. If $Q \subseteq \R^n_+$, then there holds that
\begin{equation}\label{e:fe-1}
\bigg(\fint_Q  |f_e-f_{e,Q}|^p dx\bigg)^{\frac1p}
=\bigg(\fint_Q  |f-f_Q|^p dx\bigg)^{\frac1p}
=\bigg(\fint_Q  |F-F_Q|^p dx\bigg)^{\frac1p}. 
\end{equation}

If $Q \subseteq \R^n_-$, then $\widetilde{Q} \subseteq \R^n_+$ and
\begin{align*}
f_{e,Q}=\fint_Q  f(\widetilde{y}) dy =\fint_{\widetilde{Q}} f(y) dy =\fint_{\widetilde{Q}} F(y) dy.
\end{align*}
It immediately yields that
\begin{equation}\label{e:fe-2}
\begin{split}
&\bigg(\fint_Q  |f_e(x)-f_{e,Q}|^p dx\bigg)^{\frac1p}
=\bigg(\fint_Q  |f(\widetilde{x})-f_{\widetilde{Q}}|^p dx\bigg)^{\frac1p}
\\ 
&=\bigg(\fint_{\widetilde{Q}} |f(x)-f_{\widetilde{Q}}|^p dx\bigg)^{\frac1p}
=\bigg(\fint_{\widetilde{Q}} |F(x)-F_{\widetilde{Q}}|^p dx\bigg)^{\frac1p}.
\end{split}
\end{equation}

If $Q \cap \R^n_+ \neq \emptyset$ and $Q \cap \R^n_- \neq \emptyset$, then we by changing variables deduce that
\begin{align*}
&\bigg(\fint_Q  |f_e(x)-f_{e,Q}|^p dx\bigg)^{\frac1p}
\\ 
&\lesssim \bigg(\frac{1}{|Q|} \int_{\widehat{Q}_+} |f_e(x)-f_{e,Q}|^p dx\bigg)^{\frac1p}
+ \bigg(\frac{1}{|Q|} \int_{\widehat{Q}_-} |f_e(x)-f_{e,Q}|^p dx\bigg)^{\frac1p}
\\ 
&= \bigg(\frac{1}{|\widehat{Q}_+|} \int_{\widehat{Q}_+} |f(x)-f_{e,Q}|^p dx\bigg)^{\frac1p}
+ \bigg(\frac{1}{|\widehat{Q}_+|} \int_{\widehat{Q}_-} |f(\widetilde{x})-f_{e,Q}|^p dx\bigg)^{\frac1p}
\\ 
&= 2\bigg(\fint_{\widehat{Q}_+} |f(x)-f_{e,Q}|^p dx\bigg)^{\frac1p}.
\end{align*}
Using the same technique, we get
\begin{align*}
|f_{e,Q}-f_{\widehat{Q}_+}|
&\leq \fint_Q  |f_e(x)-f_{\widehat{Q}_+}| dx
\\ 
&\leq \frac{1}{|\widehat{Q}_+|} \int_{\widehat{Q}_+}|f(x)-f_{\widehat{Q}_+}| dx
+ \frac{1}{|\widehat{Q}_+|} \int_{\widehat{Q}_-} |f(\widetilde{x})-f_{\widehat{Q}_+}| dx
\\ 
&= 2 \fint_{\widehat{Q}_+} |f(x)-f_{\widehat{Q}_+}| dx
\leq 2\bigg(\fint_{\widehat{Q}_+} |f(x)-f_{\widehat{Q}_+}|^p dx\bigg)^{\frac1p}.
\end{align*}
Observe that
\begin{align*}
\bigg(\fint_{\widehat{Q}_+} |f(x)-f_{e,Q}|^p dx\bigg)^{\frac1p}
\leq |f_{e,Q}-f_{\widehat{Q}_+}| + \bigg(\fint_{\widehat{Q}_+} |f(x)-f_{\widehat{Q}_+}|^p dx\bigg)^{\frac1p}.
\end{align*}
Hence, putting them together, we obtain that
\begin{align}\label{e:fe-3}
\bigg(\fint_Q  |f_e-f_{e,Q}|^p dx\bigg)^{\frac1p}
\lesssim \bigg(\fint_{\widehat{Q}_+} |f-f_{\widehat{Q}_+}|^p dx\bigg)^{\frac1p}
=\bigg(\fint_{\widehat{Q}_+} |F-F_{\widehat{Q}_+}|^p dx\bigg)^{\frac1p}.
\end{align}

Accordingly, the inequalities \eqref{e:fe-1}, \eqref{e:fe-2} and \eqref{e:fe-3} for $p=1$ imply that $\|f_e\|_{\BMO(\Rn)} \lesssim \|F\|_{\BMO(\Rn)}$. Moreover, for $p=2$, we have $\gamma_j(f_e)=0$ provided by $\gamma_j(F)=0$, $j=1,2,3$. This completes the proof.
\end{proof}

As a consequence, Theorem \ref{t:VMO} immediately follows from Theorems \ref{t:VMO-1} and \ref{t:VMO-2}.

We here give the comparison among the different spaces. 
\begin{theorem}\label{t:VB}
The following inclusions hold
\begin{align*}
\VMO_{\Delta}(\Rn) = \VMO_{\sqrt{\Delta}}(\Rn) = \VMO(\Rn)
\subsetneqq \VMO_{\Delta_N}(\Rn) \subsetneqq \BMO_{\Delta_N}(\Rn).
\end{align*}
\end{theorem}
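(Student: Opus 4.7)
The plan is to split the chain into three pieces: the identifications $\VMO_\Delta(\Rn) = \VMO_{\sqrt{\Delta}}(\Rn) = \VMO(\Rn)$, the inclusions $\VMO(\Rn) \subseteq \VMO_{\Delta_N}(\Rn) \subseteq \BMO_{\Delta_N}(\Rn)$, and the two strictness statements. All of the work is already set up by Proposition \ref{p:VMO}, Proposition \ref{p:BMO}, and Theorems \ref{t:VMO}--\ref{t:VMO-2}; the job is to thread these tools through the three claims.

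For the first two equalities, I would mimic the classical argument that $\BMO_\Delta = \BMO$, but tracked quantitatively. The heat and Poisson kernels have mass one and fast decay off the diagonal, so splitting $y$-space into dyadic annuli around a cube $Q$ and using $e^{-\ell(Q)^2 L}1 = 1$ yields, for $L\in\{\Delta,\sqrt{\Delta}\}$,
\begin{equation*}
\bigl(\fint_Q |f - e^{-\ell(Q)^2 L} f|^2\,dx\bigr)^{1/2} \lesssim \sum_{k \geq 0} 2^{-\delta k}\,\bigl(\fint_{2^k Q} |f - f_{2^k Q}|^2\,dy\bigr)^{1/2},
\end{equation*}
with $\delta > 0$, while the reverse direction follows from $|f_Q - (e^{-\ell(Q)^2 L}f)_Q|$ being controlled by the same tail sum. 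A standard geometric-series bookkeeping then shows that $\gamma_i(f;L)=0$ is equivalent to the corresponding Uchiyama condition $\gamma_i(f)=0$ in Proposition \ref{p:VMO}, for $i=1,2,3$, giving the equality of spaces with equivalent norms.

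For $\VMO(\Rn) \subseteq \VMO_{\Delta_N}(\Rn)$, given $f \in \VMO(\Rn)$ I take $F = f$ as its own extension, so $f|_{\R^n_\pm} \in \VMO_r(\R^n_\pm)$ by Definition \ref{d:VMO}; Theorem \ref{t:VMO-2} identifies $\VMO_r(\R^n_\pm)$ with $\VMO_e(\R^n_\pm)$, so $f_{\pm,e} \in \VMO(\Rn)$, and Theorem \ref{t:VMO} places $f$ in $\VMO_{\Delta_N}(\Rn)$. The inclusion $\VMO_{\Delta_N}(\Rn) \subseteq \BMO_{\Delta_N}(\Rn)$ is tautological. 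Strictness is by explicit witnesses. For $\VMO(\Rn) \subsetneqq \VMO_{\Delta_N}(\Rn)$ take $f = \chi_{\R^n_+}$: then $f_+ \equiv 1$ and $f_- \equiv 0$ give $f_{+,e} \equiv 1$ and $f_{-,e} \equiv 0$, both in $\VMO(\Rn)$, so $f \in \VMO_{\Delta_N}(\Rn)$ by Theorem \ref{t:VMO}; yet on any cube $Q$ centred on the hyperplane $\{x_n=0\}$ one has $\fint_Q |f - f_Q|\,dx = 1/2$, which violates condition (a) of Proposition \ref{p:VMO}. For $\VMO_{\Delta_N}(\Rn) \subsetneqq \BMO_{\Delta_N}(\Rn)$ take $f(x) = \log|x|$: the identity $|(x',|x_n|)| = |x|$ gives $f_{+,e} = f_{-,e} = \log|x|$, a classical $\BMO \setminus \VMO$ function, so Proposition \ref{p:BMO} yields $f \in \BMO_{\Delta_N}(\Rn)$ while Theorem \ref{t:VMO} rules out $f \in \VMO_{\Delta_N}(\Rn)$.

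The main obstacle is the quantitative step in the first paragraph: $\BMO_L = \BMO$ only needs qualitative kernel estimates, but here one must propagate all three limits $r\to 0$, $r\to\infty$ and $|\mathrm{dist}(Q,0)|\to\infty$ through the semigroup averaging without losing the vanishing. The dyadic decomposition with summable Gaussian tails handles the three regimes simultaneously, because the tail sum is finite and each term in it is itself a mean oscillation on a cube comparable in scale or position to $Q$. Once this comparison is in place, the remaining inclusions and strictness examples are a direct application of Theorems \ref{t:VMO}--\ref{t:VMO-2} and Proposition \ref{p:BMO}.
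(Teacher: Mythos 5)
Your proof is correct, and it diverges from the paper in two places worth noting. First, for the chain of equalities $\VMO_{\Delta}(\Rn)=\VMO_{\sqrt\Delta}(\Rn)=\VMO(\Rn)$, the paper simply cites \cite[Proposition 3.6]{DDSTY}; you instead sketch a direct dyadic-annulus argument with Gaussian/Poisson tails. Your outline is morally right but remains a sketch — the converse direction in particular (bounding $\fint_Q|f-f_Q|$ by $\fint_Q|f-e^{-\ell(Q)^2 L}f|$ plus a controllably small error term, while tracking all three Uchiyama limits including the translation-to-infinity condition (c)) needs careful bookkeeping — so citing the known result is the cleaner move. Second, for the strictness $\VMO(\Rn)\subsetneqq\VMO_{\Delta_N}(\Rn)$ your witness $\chi_{\R^n_+}$ is simpler than the paper's smooth bump construction $f=g\chi_{\{x_n>0\}}-g\chi_{\{x_n<0\}}$ and is indeed valid as a set-theoretic witness: $f_{+,e}\equiv1$ and $f_{-,e}\equiv0$ are constants, hence trivially in $\VMO(\Rn)$, while the jump at $x_n=0$ violates condition (a) of Proposition \ref{p:VMO}. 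However, note that $\|\chi_{\R^n_+}\|_{\BMO_{\Delta_N}(\Rn)}=0$ (both even extensions are constant), so your example lies in the null space of the $\BMO_{\Delta_N}$ seminorm; the paper's example has $f_{\pm,e}=\pm g$ with $\|g\|_{\BMO}>0$ and therefore exhibits a genuinely nontrivial element of $\VMO_{\Delta_N}(\Rn)\setminus\VMO(\Rn)$ even after quotienting out the kernel of the seminorm. The second strictness via $\log|x|$ coincides with the paper's example; the paper additionally supplies a self-contained verification that $\log|x|\notin\VMO(\Rn)$ rather than treating it as folklore, but your assertion is standard and correct.
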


\begin{proof}
The equivalence $\VMO_{\Delta}(\Rn)=\VMO_{\sqrt{\Delta}}(\Rn)=\VMO(\Rn)$ was proved in \cite[~Proposition 3.6]{DDSTY}.

Next, we prove $\VMO(\Rn) \subsetneqq \VMO_{\Delta_N}(\Rn)$. For any $f \in \VMO(\Rn)$, it follows from Definition \ref{d:VMO} that $f_+ \in \VMO_r(\R^n_+)$ and $f_- \in \VMO_r(\R^n_-)$. Then Theorem \ref{t:VMO-1} gives that $f \in \VMO_{\Delta_N}(\Rn)$, which shows $\VMO(\Rn) \subseteq \VMO_{\Delta_N}(\Rn)$. In order to certify the strict inclusion, we give an example. Let $\varphi \in C_c^\infty(\R)$ be an even function with $\varphi(0)=1$, and $\psi \in C_c^\infty(\Rn)$ be
a radial function satisfying $\psi(x)=1$ for $|x|\le 1$. Set
\begin{align*}
g(x)& :=\varphi(x_n)\psi(x), \\
f(x)& :=g(x)\chi_{\{x_n>0\}}-g(x)\chi_{\{x_n<0\}}.
\end{align*}
Then, clearly $g\in \mathrm{VMO}(\Rn)$, and $f_{+,e}=g$, $f_{-,e}=-g$. So, we by Theorem \ref{t:VMO} get $f\in \mathrm{VMO}_{\Delta_N}(\Rn)$. However, $f\not\in \mathrm{VMO}(\mathbb R^n)$. Indeed, let $Q_\delta=[-\delta,\delta]^n$ for $\delta>0$. Then, for $\delta<1/{\sqrt n}$ it holds $f_{Q_\delta}=0$ and
\begin{equation*}
\frac{1}{|Q_\delta|}\int_{Q_\delta}|f(x)-f_{Q_\delta}|dx
=\frac{1}{|Q_\delta|}\int_{Q_\delta}|f(x)|dx
=\frac{1}{2\delta}\int_{-\delta}^{\delta}|\varphi(t)|dt\to 1,
\end{equation*}
as $\delta\to0$. This shows $f\not\in \mathrm{VMO}(\Rn)$.

Finally, let us show $\VMO_{\Delta_N}(\Rn) \subsetneqq \BMO_{\Delta_N}(\Rn)$. It suffices to present a function $f \in \BMO_{\Delta_N}(\Rn)$ such that $f \not\in \VMO_{\Delta_N}(\Rn)$, since the inclusion is obvious. Let $f(x)=\log|x|$. As is well known, $\log|x|\in\BMO(\mathbb R^n)$ but $\log|x|\not\in\VMO(\mathbb R^n)$. Then we see that $f_{+,e}=f$ and $f\not\in \VMO_{\Delta_N}(\Rn)$ by Theorem \ref{t:VMO}. It follows from Theorem 4.1 \cite{DDSY} that $\BMO(\Rn)\subsetneqq  \BMO_{\Delta_N}(\Rn)$. Therefore, $f$ is an example which belongs to $\BMO_{\Delta_N}(\Rn)$ but does not belong to $\VMO_{\Delta_N}(\Rn)$.

For the moment, we cannot find a reference which shows $\log|x|\not\in\mathrm{VMO}(\Rn)$. So, we will present its proof.
Let $B_\delta=\{x\in\R^n: |x|<\delta\}$. Let $f=-\log|x|$. Then, for $0<\delta<1$
\begin{align*}
f_{B_\delta}&=\frac{1}{|B_\delta|}\int_{B_\delta}\log\frac{1}{|x|}dx
=\frac{\omega_{n-1}}{v_n\delta^n}
\int_{0}^{\delta}\Bigl(\log\frac{1}{r}\Bigr)r^{n-1}dr
\\
&=\frac{n}{\delta^n}\biggl\{\Bigl[\frac{r^n}{n}\log\frac{1}{r}\Bigr]_0^\delta
+\frac1n\int_{0}^{\delta}r^{n-1}\cdot \frac{1}{r}dr\biggr\}
\\
&=\frac{n}{\delta^n}\bigg(\frac{\delta^n}{n}\log\frac1\delta +\delta^n\bigg)
=\log \frac1\delta +n.
\end{align*}
Observe that $\log\frac{1}{r}>\log \frac1\delta +n$ is equivalent to $0<r<\delta/e^n$. Then it yields that
\begin{align*}
\frac{1}{|B_\delta|}\int_{f(x)>f_{B_\delta}}|f(x)-f_{B_\delta}|dx
&=\frac{1}{|B_\delta|}\int_{{B_{\delta/e^n}}}(f(x)-f_{B_\delta})dx
\\
&=\frac{|B_{\delta/e^n}|}{|B_{\delta}|}
\frac{1}{|B_{\delta/e^n}|}\int_{{B_{\delta/e^n}}}(f(x)-f_{B_\delta})dx
\\
&=e^{-n}\biggl\{\Bigl(\frac1n \log\frac{e^n}{\delta}+n\Bigr) -\Bigl(\log \frac1\delta +n\Bigr)\biggr\}
=ne^{-n}.
\end{align*}
From this we get
\begin{equation*}
\frac{1}{|B_\delta|}\int_{B_\delta}|f(x)-f_{B_\delta}|dx=2ne^{-n}.
\end{equation*}
Thus, we deduce that
\begin{equation*}
\lim_{\delta\to0}\sup_{x\in\R^n,r\le\delta}
\frac{1}{|B(x,r)|}\int_{B(x,r)}|f(x)-f_{B(x,r)}|dx\ge 2ne^{-n}.
\end{equation*}
Hence, we see that $\log\frac{1}{|x|}\not\in\VMO(\Rn)$, which is equivalent to $\log|x|\not\in\VMO(\Rn)$.
\end{proof}

Let $X$ and $Y$ be Banach spaces. For the convenience of notation, we denote by $\overline{X}^Y$ the closure of $X$ in $Y$.
Now we characterize $\VMO$ spaces via smooth functions with compact supports.

\begin{theorem}\label{t:CCC}
We have
\begin{align}
\label{e:VN} \VMO_{\Delta_N}(\Rn) &= \overline{C_c^{\infty}(\Rn)}^{\BMO_{\Delta_N}(\Rn)}, \\
\label{e:VN+} \VMO_{\Delta_{N_+}}(\R^n_+) &= \overline{C_c^{\infty}(\R^n_+)}^{\BMO_{\Delta_{N_+}}(\R^n_+)}, \\
\label{e:Ve} \VMO_e(\R^n_+) &= \overline{C_c^{\infty}(\R^n_+)}^{\BMO_e(\R^n_+)}, \\
\label{e:Vr} \VMO_r(\R^n_+) &= \overline{C_c^{\infty}(\R^n_+)}^{\BMO_r(\R^n_+)}, \\
\label{e:Vz} \VMO_z(\R^n_+) &= \overline{C_c^{\infty}(\R^n_+)}^{\BMO_z(\R^n_+)}.
\end{align}
Moreover, the similar results hold for the lower half-space.
\end{theorem}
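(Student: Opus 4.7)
My plan is to deduce the five equalities in Theorem~\ref{t:CCC} from the classical Coifman--Weiss identity $\VMO(\Rn)=\overline{C_c^\infty(\Rn)}^{\BMO(\Rn)}$ via the structural theorems already established (Theorems~\ref{t:VMO-1} and \ref{t:VMO-2} and Proposition~\ref{p:BMO}). I would treat \eqref{e:Ve} and \eqref{e:Vr} first, then derive \eqref{e:VN+} by norm equivalence, and finally handle \eqref{e:VN} and \eqref{e:Vz} by a gluing/cutoff construction.

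For \eqref{e:Ve}, the key point is that $f\mapsto f_e$ is an isometric embedding of $\BMO_e(\R^n_+)$ into $\BMO(\Rn)$ whose image is the reflection-invariant subspace; by Theorem~\ref{t:VMO} the same identification restricts to $\VMO$. For the nontrivial direction, pick $\phi_k\in C_c^\infty(\Rn)$ with $\phi_k\to f_e$ in $\BMO(\Rn)$, symmetrize to $\widetilde\phi_k(x):=\tfrac12(\phi_k(x)+\phi_k(\widetilde x))$, and use the reflection invariance of the $\BMO$ seminorm to conclude $\|f_e-\widetilde\phi_k\|_{\BMO(\Rn)}\le\|f_e-\phi_k\|_{\BMO(\Rn)}\to0$; set $\psi_k:=\widetilde\phi_k|_{\R^n_+}$. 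The reverse inclusion follows because $\phi_e$ is continuous with compact support and therefore lies in $\VMO(\Rn)$ by a direct check of conditions (a)--(c) in Proposition~\ref{p:VMO}. Equation~\eqref{e:Vr} follows the same template with restriction in place of symmetrization: approximate any $\VMO$-extension $F$ of $f$ by $\phi_k\in C_c^\infty(\Rn)$ and take $\psi_k:=\phi_k|_{\R^n_+}$; the inequality $\|f-\psi_k\|_{\BMO_r(\R^n_+)}\le\|F-\phi_k\|_{\BMO(\Rn)}$ is built into the definition of the infimum norm. Equation \eqref{e:VN+} is then immediate from \eqref{e:Ve}, the norm equivalence $\BMO_{\Delta_{N_+}}\simeq\BMO_e$ (Proposition~\ref{p:BMO}), and the identity $\VMO_{\Delta_{N_+}}=\VMO_e$ (Theorem~\ref{t:VMO-2}).

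For \eqref{e:VN}, assemble via Theorem~\ref{t:VMO-1}: given $f\in\VMO_{\Delta_N}(\Rn)$, approximate $f_\pm\in\VMO_{\Delta_{N_\pm}}(\R^n_\pm)$ by $\phi_k^\pm\in C_c^\infty(\R^n_\pm)$ using \eqref{e:VN+} on each half-space. The challenge is to merge these into a single $\psi_k\in C_c^\infty(\Rn)$: the naive pointwise concatenation is generically discontinuous across $\partial\R^n_+$ because $\phi_k^+(\cdot,0)\neq\phi_k^-(\cdot,0)$. I would handle this via a partition-of-unity blend in a thin boundary strip $\{|x_n|\le 1/k\}$, setting $\psi_k:=\eta_+(x_n)\Phi_k^+ + \eta_-(x_n)\Phi_k^-$ where $\Phi_k^\pm$ are smooth extensions of $\phi_k^\pm$ across the boundary and $\eta_\pm$ form a smooth partition of unity adapted to the half-spaces. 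The $\BMO_{\Delta_N}$-error then decomposes via the equivalence in Proposition~\ref{p:BMO} into upper and lower even-extension pieces, each controlled using $\gamma_1(f;\Delta_N)=0$ to bound the mean oscillation of $f$ on small cubes straddling $\partial\R^n_+$. Equation~\eqref{e:Vz} runs a parallel cutoff argument: approximate $f_z\in\VMO(\Rn)$ by $\phi_k\in C_c^\infty(\Rn)$, multiply by an even smooth $\eta_k(x_n)$ supported in $\{x_n\ge 1/k\}$ and equal to $1$ on $\{x_n\ge 2/k\}$, and restrict to $\R^n_+$; convergence reduces to $\|(1-\eta_k)\phi_k\|_{\BMO(\Rn)}\to 0$, which uses $f_z|_{\R^n_-}=0$ together with $\gamma_1(f_z)=0$.

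Main obstacle. The recurring hard step across \eqref{e:VN} and \eqref{e:Vz} is the boundary cutoff/blending estimate $\|(1-\eta_k)\phi_k\|_{\BMO(\Rn)}\to0$, which does not follow from crude $L^\infty$ bounds since the $\VMO$ approximants need not be uniformly bounded. The saving grace is the vanishing-mean-oscillation condition of the limit at the interface, coupled with a three-zone case split by cube size relative to the cutoff width $1/k$ (small cubes inside the strip, cubes of size $\sim 1/k$ straddling, and cubes of size $\gg 1/k$), mirroring the case analysis already used in the proofs of Theorems~\ref{t:VMO-1}--\ref{t:VMO-2}. A cleaner route---consistent with the paper's own roadmap (\emph{``connecting by continuity and smoothness''})---is to first invoke the approximation of $\BMO$ functions by continuous bounded-support functions developed in Section~\ref{s:homo}, reducing to continuous $\phi_k$ for which the cutoff estimates become pointwise and manageable, then mollify to recover smoothness without destroying the $\BMO$ convergence.
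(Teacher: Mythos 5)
The overall architecture you propose — reduce everything to the classical identity $\VMO(\Rn)=\overline{C_c^\infty(\Rn)}^{\BMO}$ and to the structural equivalences of Theorems~\ref{t:VMO-1}, \ref{t:VMO-2} and Proposition~\ref{p:BMO}, then fight the boundary — matches the paper's strategy at a high level. However, there are genuine gaps.

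The most serious one appears already in your treatment of \eqref{e:Ve} and \eqref{e:Vr}, where you set $\psi_k := \widetilde\phi_k|_{\R^n_+}$ (resp.\ $\phi_k|_{\R^n_+}$) and assert this belongs to $C_c^\infty(\R^n_+)$. It does not: $C_c^\infty(\R^n_+)$ denotes smooth functions compactly supported in the \emph{open} half-space, so every element vanishes in a strip $\{0<x_n<\delta\}$. The restriction of a generic $\phi_k\in C_c^\infty(\Rn)$ to $\R^n_+$ has nonzero trace at $x_n=0$ and is therefore not admissible. (The paper's proof of \eqref{e:Ve} makes a point of producing an approximant $\tilde h$ with $\tilde h(x',x_n)=0$ for $|x_n|<\delta$ before restricting.) Thus the two cases you view as the easy ones are not actually settled, and since \eqref{e:VN+} and \eqref{e:VN} are deduced from \eqref{e:Ve} in your plan, the whole chain is blocked at the first link. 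In fact, once \eqref{e:VN+} \emph{is} available in the strong form (approximants in $C_c^\infty(\R^n_+)$), your gluing for \eqref{e:VN} becomes trivial — the concatenation $\phi_k^+\mathbf{1}_{\R^n_+}+\phi_k^-\mathbf{1}_{\R^n_-}$ is automatically $C_c^\infty(\Rn)$ because each piece vanishes near $x_n=0$ — so the "blend in a thin strip" machinery is superfluous; the real work is all in \eqref{e:Ve}.

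The second issue is that you correctly flag the boundary cutoff estimate $\|(1-\eta_k)\phi_k\|_{\BMO(\Rn)}\to 0$ as the hard step, but you do not supply the mechanism that makes it go through. The paper's resolution is Lemma~\ref{lem:VMO-3}, which is constructive and quantitative: given any $g\in C_c^\infty(\Rn)$ and $\epsilon>0$ it produces an \emph{even} bump $\psi\in C_c^\infty(\R)$ with $\psi(0)=1$ and $\|g(x',0)\psi(x_n)\|_{\BMO(\Rn)}<\epsilon$, built from the logarithmic-scale cutoffs of Lemmas~\ref{lem:VMO-1}--\ref{lem:VMO-2}. Subtracting $g(x',0)\psi(x_n)$ from $g$ kills the boundary trace at a uniformly small $\BMO$ cost, which is exactly what a naive small-support cutoff cannot achieve (a fixed-profile cutoff at scale $1/k$ costs $O(1)$ in $\BMO$, not $o(1)$). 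Your three-zone case split by cube size and appeal to $\gamma_1(f)$ gestures in the right direction but does not replace this: $\gamma_1$ controls the target $f$, not the approximants $\phi_k$, and the latter may oscillate badly near the interface.

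Finally, the "cleaner route" you suggest via Section~\ref{s:homo} does not help: Proposition~\ref{p:appr-BMO} produces approximants $f_j$ with $\|f_j\|_{\BMO}\lesssim\|f\|_{\BMO}$ and a.e.\ convergence, but gives no $\BMO$-norm convergence $\|f-f_j\|_{\BMO}\to0$, which is what is needed here; it is designed for the duality argument in Theorem~\ref{t:VMOr-H}, not for proving density in the $\BMO$ topology.

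In short: the high-level decomposition is sound and your observation that \eqref{e:VN} becomes trivial once the half-space density results are in hand is a genuine simplification over the paper's direct construction of $h$, but the proof is missing the core quantitative ingredient — a way to force the approximants to vanish near the boundary without paying $O(1)$ in $\BMO$ — which in the paper is exactly Lemma~\ref{lem:VMO-3}.
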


\begin{proof}
It suffices to show \eqref{e:VN}, \eqref{e:Ve} and \eqref{e:Vz}, Since \eqref{e:VN+} and \eqref{e:Vr} follow from \eqref{e:Ve}, Theorem \ref{t:VMO-2} and Proposition \ref{p:BMO}. We begin with the proof of \eqref{e:VN}.

We will use the fact
\begin{equation}\label{e:C}
\VMO(\Rn) = \overline{C_c(\Rn)}^{\BMO(\Rn)}.
\end{equation}
Here $C_c(\Rn)$ is the collection of continuous functions with compact support. Indeed, $\VMO(\Rn) \subseteq \overline{C_c(\Rn)}^{\BMO(\Rn)}$ follows from $\VMO(\Rn) = \overline{C_c^{\infty}(\Rn)}^{\BMO(\Rn)}$. Now let 
$f \in \overline{C_c(\Rn)}^{\BMO(\Rn)}$. Then for any $\epsilon>0$, there exists $g \in C_c(\Rn)$ such that $\|f-g\|_{\BMO(\Rn)} < \epsilon$. As a consequence, it yields that $g \in L^1(\Rn) \cap L^{\infty}(\Rn)$, and $g$ is uniformly continuous on $\Rn$. For any cube $Q \subseteq \Rn$, we have
\begin{align*}
\MO(f,Q) &\leq \MO(f-g,Q) + \MO(g,Q)
\\
&\leq \|f-g\|_{\BMO(\Rn)} + \MO(g,Q)
< \epsilon + \MO(g,Q),
\end{align*}
where $\MO(f,Q)=\fint_Q |f(y)-f_Q|dy$. Thus, $f \in \BMO(\Rn)$ with $\|f\|_{\BMO(\Rn)} \lesssim \epsilon+\|g\|_{L^{\infty}(\Rn)}$. Additionally, $\gamma_1(f)=0$ since $g$ is uniformly continuous, and
\begin{align*}
\MO(f,Q) < \epsilon + \fint_{Q}\fint_{Q} |g(x)-g(y)| dx dy.
\end{align*}
$\gamma_2(f)=0$ follows from $g \in L^1(\Rn)$, and $\MO(f,Q) < \epsilon + \frac{2}{|Q|} \|g\|_{L^1(\Rn)}$. For any cube $Q$ with $Q \cap \supp(g)=\emptyset$, there holds $\MO(f,Q) < \epsilon + \MO(g,Q) = \epsilon$, which implies that $\gamma_3(f)=0$. This shows $f \in \VMO(\Rn)$.

Let us continue our proof. We first prove $\overline{C_c^{\infty}(\Rn)}^{\BMO_{\Delta_N}(\Rn)} \subseteq \VMO_{\Delta_N}(\Rn)$. Assume that $f \in \overline{C_c^{\infty}(\Rn)}^{\BMO_{\Delta_N}(\Rn)}$. Then for any $\epsilon>0$, there exists $g \in C_c^{\infty}(\Rn)$ such that $\|f-g\|_{\BMO_{\Delta_N}(\Rn)} < \epsilon$. Observe that $g_{+,e},g_{-,e} \in C_c(\Rn)$.
Indeed, if $\supp(g) \subseteq \R^n_+$, then $g_{+,e} \in C_c(\Rn)$ and $g_{-,e} \equiv 0$. If $\supp(g) \subseteq \R^n_-$, then $g_{+,e} \equiv 0$ and $g_{-,e} \in C_c(\Rn)$. If $\supp(g) \cap \R^n_+ \neq \emptyset$ and $\supp(g) \cap \R^n_- \neq \emptyset$, then $g_{+,e} \in C_c(\Rn)$ and $g_{-,e} \in C_c(\Rn)$. Moreover, it follows from Proposition \ref{p:BMO} that $\|f_{+,e}-g_{+,e}\|_{\BMO(\Rn)} \lesssim \epsilon$ and $\|f_{-,e}-g_{-,e}\|_{\BMO(\Rn)} \lesssim \epsilon$. By \eqref{e:C}, we have $f_{+,e} \in \VMO(\Rn)$ and $f_{-,e} \in \VMO(\Rn)$, which together with Theorem \ref{t:VMO} gives $f \in \VMO_{\Delta_N}(\Rn)$.

Now we are in the position to show the converse. Assume that $f \in \VMO_{\Delta_N}(\Rn)$, which by Theorem \ref{t:VMO} gives that $f_{+,e}, f_{-,e} \in \VMO(\Rn)$. Hence for any $\epsilon>0$, there exists $\tilde g_1\in C_c^\infty(\Rn)$ such that
$\|f_{+,e}-\tilde g_1\|_{\BMO(\Rn)}<\epsilon $. Set
$$
g_1(x)=(\tilde g_1(x)+\tilde g_1(x',-x_n))/2, \ \ x=(x',x_n) \in \Rn.
$$
Then we see that $g_1(x)=g_1(x',-x_n)=:g_1(\widetilde{x})$ and
\begin{align*}
\|f_{+,e}-g_1\|_{BMO(\Rn)}
&=\|f_{+,e}-(\tilde g_1(x)+\tilde g_1(x',-x_n))/2\|_{\BMO(\Rn)}
\\
&\le \frac12\big(\|f_{+,e}-\tilde g_1\|_{\BMO(\Rn)}
+\|f_{+,e}-\tilde g_1(x',-x_n)\|_{\BMO(\Rn)}\big)
\\
&=\|f_{+,e}-\tilde g_1\|_{\BMO(\Rn)}
<\epsilon.
\end{align*}
Similarly, there exist $g_2\in C_c^\infty(\Rn)$ such that
\begin{equation*}
g_2(x)=g_2(\widetilde{x})  \ \text{ and } \ \|f_{-,e}-g_2\|_{\BMO(\Rn)}<\epsilon .
\end{equation*}
By Lemma \ref{lem:VMO-3} below there exist even functions $\psi_1,\,\psi_2\in C_c^\infty(\R)$ such that $\psi_1(0)=\psi_2(0)=1$ and
\begin{equation*}
\|g_1(x',0)\psi_1(x_n)\|_{\BMO(\Rn)} + \|g_2(x',0)\psi_2(x_n)\|_{\BMO(\Rn)} < \varepsilon .
\end{equation*}
Define
\begin{equation*}
h(x)=\begin{cases}
g_1(x)+g_2(x',0)\psi_2(x_n), &x \in \R^n_+, \\
g_1(x',0)\psi_1(x_n)+g_2(x), &x \in \R^n_-.
\end{cases}
\end{equation*}
It immediately yields that
\begin{equation*}
h_{+,e}(x)= g_1(x)+g_2(x',0)\psi_2(x_n), \quad h_{-,e}(x)= g_1(x',0)\psi_1(x_n) +g_2(x),
\end{equation*}
$h \in C_c(\Rn)$ and $h_{+,e}, h_{-,e} \in C_c^{\infty}(\Rn) \subseteq \VMO(\Rn)$. Consequently, we deduce that
\begin{align*}
\|f_{+,e}-h_{+,e}\|_{\BMO(\Rn)}
&\le \|f_{+,e}-g_1\|_{\BMO(\Rn)}+\|g_1-h_{+,e}\|_{\BMO(\Rn)}
\\ 
&\le \epsilon +\|g_2(x',0)\psi_2(x_n)\|_{BMO(\Rn)}<2\epsilon,
\\ 
\|f_{-,e}-h_{-,e}\|_{\BMO(\Rn)}
&\le \|f_{-,e}-g_2\|_{\BMO(\Rn)}+\|g_2-h_{-,e}\|_{\BMO(\Rn)}
\\ 
&\le \epsilon +\|g_1(x',0)\psi_1(x_n)\|_{\BMO(\Rn)}<2\epsilon,
\end{align*}
and
\begin{align*}
\|f-h\|_{\VMO_{\Delta_N}(\Rn)}
\simeq  \|f_{+,e}-h_{+,e}\|_{\VMO(\Rn)} + \|f_{-,e}-h_{-,e}\|_{\VMO(\Rn)}
\lesssim \epsilon.
\end{align*}
This implies that $C_c(\Rn)$ is dense in $\VMO_{\Delta_N}(\Rn)$. Since $C_c^\infty(\Rn)$ is dense in $C_c(\Rn)$ under the $L^{\infty}(\Rn)$ norm, we see that $C_c^\infty(\Rn)$ is dense in $\VMO_{\Delta_N}(\Rn)$.

Now we present the proof of \eqref{e:Ve}. Let $f\in \VMO_{e}(\R_+^n)$. Then $f_{e}\in \VMO(\Rn)$, and hence for fixed $\epsilon >0$ there exists $g\in C_c^\infty(\Rn)$ such that $g(x)=g(x',-x_n)$ and $\|f_{e}-g\|_{\BMO(\Rn)}<\epsilon$. By Lemma \ref{lem:VMO-3} there exist an even function $\psi\in C_c^\infty(\R)$ such that
\begin{equation*}
\psi(0)=1 \ \text{ and } \
\|g(x',0)\psi(x_n)\|_{\BMO(\Rn)}< \epsilon .
\end{equation*}
Define
\begin{equation*}
h(x)=f_e(x)-g(x',0)\psi(x_n), \ x \in \Rn.
\end{equation*}
Then we have $h \in C_c^{\infty}(\Rn)$ and
\begin{equation*}
\|f_{e}-h\|_{\BMO(\Rn)}<\epsilon\ \text{ and }h(x',0)=0.
\end{equation*}
Approximating $h$ uniformly, we can find $\tilde h\in C_c^\infty(\Rn)$ such that $\tilde h(x',-x_n)=\tilde h(x)$ and for some $\delta>0$
\begin{equation*}
\|f_{e}-\tilde h\|_{\BMO(\Rn)}<2\epsilon\ \text{ and }\tilde h(x',x_n)=0
\ \text{ for } -\delta<x_n<\delta.
\end{equation*}
Then $\tilde{h}_+ \in C_c^\infty(\R_+^n)$ and
\begin{equation*}
\|f-\tilde{h}_+\|_{\BMO_{e}(\R_+^n)}
=  \|f_{e}-\tilde{h}\|_{\BMO(\Rn)} <2\epsilon.
\end{equation*}
This yields $\VMO_e(\R_+^n)
\subseteq \overline{C_c^\infty(\R_+^n)}^{\BMO_e(\Rn)}$.
The converse inclusion relation is trivial.

The proof of \eqref{e:Vz} is similar to \eqref{e:Ve}. 
\end{proof}

The remainder of this section is devoted to showing Lemma \ref{lem:VMO-3}. To this end, we first present the $\BMO$ estimates for smooth functions with compact supports.

\begin{lemma}\label{lem:VMO-1}
Let $\varphi_j(x)\in C_c^\infty(\R)$ be nonnegative and satisfy $\chi_{\{|x|\le 2^j-1\}}\le \varphi_j(x) \le \chi_{\{|x|\le 2^j\}}$, $j \in \N$, and
\begin{equation*}
\psi_\ell(x) = \frac{1}{\ell} \sum_{j=1}^{\ell}\varphi_j(x),\ \ \ell=3,4,\dots.
\end{equation*}
Then it holds that $\psi_\ell\in C_c^\infty(\R)$, $0\le \psi_\ell(x)\le 1$, $\psi_\ell(x)=1$ for $|x|\le 1$, $\psi_\ell(x)=0$ for $|x|\ge 2^\ell$ and $\|\psi_\ell\|_{\BMO(\R)} \le 16/\ell$,
\end{lemma}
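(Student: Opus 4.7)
My plan is to verify the elementary assertions termwise from the squeeze $\chi_{\{|x|\le 2^j-1\}}\le \varphi_j\le \chi_{\{|x|\le 2^j\}}$, and then to establish the BMO estimate by decomposing $\ell\psi_\ell$ into an $L^\infty$-bounded ``transition'' piece plus a ``staircase'' piece whose BMO norm is controlled by the classical BMO bound for $\log$.

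For the elementary conclusions, smoothness and compact support pass through the finite average, and $0\le\psi_\ell\le 1$ is immediate from $0\le\varphi_j\le 1$. When $|x|\le 1$, every $\chi_{\{|x|\le 2^j-1\}}(x)=1$ for $j\ge 1$, squeezing $\varphi_j(x)=1$ and hence $\psi_\ell(x)=1$. When $|x|\ge 2^\ell$ (using continuity of $\varphi_\ell$ at the endpoint $|x|=2^\ell$), every $\chi_{\{|x|\le 2^j\}}(x)=0$ for $j\le\ell$, squeezing $\varphi_j(x)=0$ and hence $\psi_\ell(x)=0$.

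For the BMO bound I would write $\ell\psi_\ell=A+B$, where
\[
A:=\sum_{j=1}^{\ell}\bigl(\varphi_j-\chi_{[-(2^j-1),\,2^j-1]}\bigr),\qquad
B:=\sum_{j=1}^{\ell}\chi_{[-(2^j-1),\,2^j-1]},
\]
and bound each piece separately. The hypothesis on $\varphi_j$ forces the $j$-th summand of $A$ to take values in $[0,1]$ and to be supported in the annulus $\{2^j-1<|x|\le 2^j\}$. Since $2^j<2^{j+1}-1$ for all $j\ge 1$, these annuli are pairwise disjoint, so at every point at most one summand of $A$ is nonzero. Hence $\|A\|_\infty\le 1$ and therefore $\|A\|_{\BMO(\R)}\le 2$. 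For $B$, a direct count shows $B=\ell-k$ on the ring $\{2^k-1<|x|\le 2^{k+1}-1\}$ for $0\le k\le\ell-1$ and $B=0$ for $|x|>2^\ell-1$. Comparing with the continuous profile $g(x):=\max(\ell-\log_2(|x|+1),0)$, which on the same ring decreases monotonically from $\ell-k$ at the inner edge to $\ell-k-1$ at the outer edge, I get $|B-g|\le 1$ everywhere, hence $\|B-g\|_{\BMO(\R)}\le 2$. Additive constants drop out of the BMO seminorm and the positive-part map $t\mapsto t_+$ is $1$-Lipschitz (so, via the equivalent double-integral formulation of BMO, does not inflate the BMO norm by more than a fixed factor), so $\|g\|_{\BMO(\R)}$ is controlled by $\|\log_2(|x|+1)\|_{\BMO(\R)}$, which is an absolute constant by the classical fact that $\log|x|\in\BMO(\R^n)$. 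Summing the pieces yields $\|\ell\psi_\ell\|_{\BMO(\R)}\le C$ for an absolute $C$, i.e.\ $\|\psi_\ell\|_{\BMO(\R)}\le C/\ell$.

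The main obstacle is \emph{sharpening the absolute constant $C$ down to $16$}. The $L^\infty$ control of $A$ loses nothing, so the work is to estimate $\|B\|_{\BMO(\R)}$ (equivalently $\|\log_2(|x|+1)\|_{\BMO(\R)}$) with an explicit numerical bound. Two routes present themselves: either cite a sharp value for $\|\log|x|\|_{\BMO(\R)}$ and track the $1$-Lipschitz/positive-part constants, or bypass logarithms and argue directly with the staircase by a case split on $|I|$ --- intervals of length $|I|\le 1$ meet at most one jump of $B$ (yielding $\fint_I|B-B_I|\,dx\le 1$), while intervals with $|I|>1$ exploit the geometric growth of plateau lengths against the unit-sized jumps, so the mean $B_I$ sits near the value on the largest plateau intersecting $I$ and a direct computation keeps the mean oscillation bounded by an explicit absolute constant. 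The latter route seems the cleaner path to the precise constant $16/\ell$.
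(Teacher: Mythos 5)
Your termwise verification of the elementary assertions (smoothness, $0\le\psi_\ell\le 1$, $\psi_\ell=1$ on $|x|\le 1$, $\psi_\ell=0$ on $|x|\ge 2^\ell$) is correct, including the boundary-point-via-continuity observation. Your decomposition $\ell\psi_\ell=A+B$ with the $L^\infty$-bounded transition part $A$ (disjoint supports because $2^j<2^{j+1}-1$) and the staircase $B$, followed by comparison of $B$ with $g(x)=\max(\ell-\log_2(|x|+1),0)$, is also sound and is a \emph{genuinely different route} from the paper's: it reduces the $O(1/\ell)$ decay of $\|\psi_\ell\|_{\BMO}$ to the classical fact $\log|x|\in\BMO(\R)$, whereas the paper never invokes the logarithm at all. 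This route cleanly explains \emph{why} the bound is of order $1/\ell$.

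The gap is the one you flag yourself: the lemma asserts the specific numerical bound $16/\ell$, and your argument as written only yields $C/\ell$ for an untracked absolute constant $C$ (coming from $\|A\|_{\BMO}\le 2$, $\|B-g\|_{\BMO}\le 2$, the Lipschitz-composition factor, and whatever value you would cite for $\|\log_2(|x|+1)\|_{\BMO(\R)}$). Your closing remark --- that the ``cleaner path'' to $16$ is to drop the logarithm and do a direct case split on the interval length $|I|$ against the staircase --- is in fact a description of the paper's proof, which you then do not carry out. The paper works directly with $\ell\psi_\ell$ and splits into $|I|<2$ (where $I$ meets at most one transition band because consecutive bands are separated by $2^j\ge 2$, giving oscillation $\le 2$), and $|I|\ge 2$ with $2^j\le|I|<2^{j+1}$, further split into $j\ge\ell-1$ (where $\fint_I\ell\psi_\ell<3$) and $1\le j\le\ell-2$ (where one compares to the constant $\ell-i$ or $\ell-j-1$ and sums a geometric series, landing on $\le 8$ and hence $\MO\le 16$). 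So the numerical claim is genuinely computational, not structural, and your proposal defers rather than settles it. For the downstream application in Lemma \ref{lem:VMO-3} any explicit absolute constant would serve equally well (one only needs $\|\psi_\ell\|_{\BMO}\to 0$), so your route recovers all the \emph{content} of the lemma, but not the lemma as literally stated.

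One smaller point worth tightening if you complete the argument: when you pass from $\|g\|_{\BMO}$ to $\|\log_2(|x|+1)\|_{\BMO}$ via ``additive constants drop out and $t\mapsto t_+$ is $1$-Lipschitz,'' the Lipschitz-composition bound for BMO does cost a factor of $2$ (compare $\phi(f)$ to $\phi(f_I)$, then double), so this step is not free and must be folded into whatever explicit constant you claim.
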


\begin{proof}
We have only to prove $\|\ell\,\psi_\ell\|_{\BMO(\R)}\le 16$. Let $I=[a,b]$ be an interval. If $|I| < 2$, then we see easily that $(2^{j+1}-1)-(2^j-1)=2^j \geq 2$ for any $j \geq 1$ and 
\begin{equation*}
\fint_I |\ell\psi_\ell(x)-(\ell\psi_\ell)_I| dx
\leq \sum_{j=1}^{\ell} \fint_I |\varphi_j(x)-(\varphi_j)_I| dx 
\le 2.
\end{equation*}
If $|I|\ge 2$, there exists $j\in\N_+$ such that $2^j\le |I|< 2^{j+1}$. We shall consider the following two cases: (a) $j\ge \ell-1$ and (b) $1\le j\le \ell-2$.

\par \smallskip\noindent{\bf Case (a): $j\ge \ell-1$.}
We get
\begin{align*}
\fint_I|\ell\psi_{\ell}(x)|dx
&\le \sum_{i=1}^{\ell-2} \fint_{I} \varphi_i(x)dx
+\sum_{i=ell-1}^{\ell}\fint_{I}\varphi_i(x)dx
\\
&\le \sum_{i=1}^{\ell-2}\frac{1}{2^j}\dot 2\cdot 2^i +2
=\frac{2^{\ell-1}}{2^j}+2<3.
\end{align*}

\par \smallskip\noindent{\bf Case (b):  $1\le j\le \ell-2$.}
If $a \ge 2^{j}$, then $I$ contains at most one point of the form $2^i$ $(j+1\le i\le \ell)$. Hence, it yields that
\begin{equation*}
\fint_I|\ell\psi_{\ell}(x)-(\ell-i)|dx\le 1.
\end{equation*}
If $-2^j\le a < 2^{j}$, then $0\le b< 2^{j+1}+2^{j-1}<2^{j+2}\le 2^\ell$. 
Hence we get
\begin{equation*}
\fint_I|\ell\psi_{\ell}(x)-(\ell-j-1)|dx
\le \frac{1}{2^{j}}\sum_{i=1}^{j+1}2\cdot 2^i\le \frac{1}{2^j}\cdot 2\cdot 2^{j+2}=8.
\end{equation*}
Similarly, the above estimates hold for $b<2^j$. Noting that $a<2^j$ implies $b<2^{j+1}-2^j=2^j$,  we get
\begin{equation*}
\fint_I|\ell\psi_\ell(x)-(\ell\psi_\ell)_I|dx\le 16.
\end{equation*}
This completes the proof. 
\end{proof}

\begin{lemma}\label{lem:VMO-2}
For any $\epsilon,\,\eta >0$, there exists
$\psi\in C_c^\infty(\R)$ with $\supp \psi\subset [-\eta, \eta]$ such that
\begin{equation*}
\|\psi\|_{\BMO(\R)}<\epsilon,\
0\le \psi(x)\le 1, \
\text{ and }\ \psi(0)=1.
\end{equation*}
\end{lemma}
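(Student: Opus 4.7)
The plan is to simply rescale the cutoff family $\{\psi_\ell\}$ constructed in Lemma \ref{lem:VMO-1} and invoke the dilation invariance of the $\BMO$ norm on $\R$. Recall that Lemma \ref{lem:VMO-1} produces, for each integer $\ell \ge 3$, a function $\psi_\ell \in C_c^\infty(\R)$ with $0 \le \psi_\ell \le 1$, $\psi_\ell \equiv 1$ on $[-1,1]$, $\supp \psi_\ell \subset [-2^\ell,2^\ell]$, and $\|\psi_\ell\|_{\BMO(\R)} \le 16/\ell$. These features already give almost everything we need; only the support condition and the smallness of the $\BMO$ norm must be enforced simultaneously.

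First, given $\epsilon > 0$, I would choose $\ell \in \N$ with $\ell > 16/\epsilon$, so that $\|\psi_\ell\|_{\BMO(\R)} \le 16/\ell < \epsilon$. Next, given $\eta > 0$, I would define
\begin{equation*}
\psi(x) := \psi_\ell\!\left(\frac{2^\ell\, x}{\eta}\right), \qquad x \in \R.
\end{equation*}
The pointwise properties transfer immediately from $\psi_\ell$: $\psi \in C_c^\infty(\R)$, $0 \le \psi(x) \le 1$, $\psi(0) = \psi_\ell(0) = 1$, and since $\psi_\ell$ vanishes outside $[-2^\ell,2^\ell]$, the dilation forces $\psi$ to vanish outside $[-\eta,\eta]$, giving $\supp \psi \subset [-\eta,\eta]$.

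The only nontrivial point is the $\BMO$ estimate, and here I would use the standard dilation invariance of $\BMO$ on $\R$: for any $\lambda > 0$ and $f \in \BMO(\R)$, a change of variables in the mean oscillation integrals yields
\begin{equation*}
\|f(\lambda \cdot)\|_{\BMO(\R)} = \|f\|_{\BMO(\R)},
\end{equation*}
because the average of $f(\lambda x)$ over an interval $I$ equals the average of $f$ over $\lambda I$. Applying this with $\lambda = 2^\ell/\eta$ to $f = \psi_\ell$ gives
\begin{equation*}
\|\psi\|_{\BMO(\R)} = \|\psi_\ell\|_{\BMO(\R)} \le \frac{16}{\ell} < \epsilon,
\end{equation*}
which completes the proof. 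There is no real obstacle here; the only content beyond Lemma \ref{lem:VMO-1} is recognizing that $\BMO$ is scaling invariant, so the same upper bound $16/\ell$ holds after compressing the support from $[-2^\ell,2^\ell]$ down to $[-\eta,\eta]$.
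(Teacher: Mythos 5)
Your proposal is correct and is essentially identical to the paper's proof: both choose $\ell$ with $16/\ell < \epsilon$, set $\psi(x) = \psi_\ell(2^\ell x/\eta)$, and invoke the dilation invariance of the $\BMO(\R)$ norm to conclude. No gaps.
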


\begin{proof}
We use the notations in Lemma \ref{lem:VMO-1}. First we take $\ell\in\N_+$ so that $\|\psi_\ell\|_{\BMO(\R)}\le 16/\ell<\epsilon $, and we set
\begin{equation*}
\psi(x)=\psi_\ell(2^\ell x/\eta).
\end{equation*}
Then from the dilation invariance of $\BMO$ norm, we get $\|\psi\|_{\BMO(\R)}<\epsilon$. Since the $\supp \psi_\ell\subset [-2^\ell,\,2^\ell]$, we see that $\supp \psi\subset  [-\eta,\,\eta]$. This $\psi$ also satisfies $\psi(0)=1$ and $0\le \psi(x)\le 1$ for $x\in\R$.
\end{proof}

\begin{lemma}\label{lem:VMO-3}
For any $\epsilon >0$ and $g\in C_c^\infty(\Rn)$, there exists
$\psi\in C_c^\infty(\R)$ such that
\begin{equation*}
0\le \psi(x_n)\le 1,\ \psi(0)=1
\text{ and }\|g(x',0)\psi(x_n)\|_{\BMO(\Rn)}<\epsilon,
\end{equation*}
where $x=(x',x_n) \in \Rn$.
\end{lemma}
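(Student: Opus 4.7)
The plan is to build $\psi$ by applying Lemma \ref{lem:VMO-2} with two parameters chosen in sequence, exploiting the product structure of $F(x):=g(x',0)\psi(x_n)$. Set $G(x'):=g(x',0)\in C_c^\infty(\R^{n-1})$, $M:=\|G\|_\infty$, and $B:=\|G\|_{\BMO(\R^{n-1})}$; if $G\equiv 0$, any $\psi\in C_c^\infty(\R)$ with $\psi(0)=1$ and $0\le\psi\le 1$ works, so I may assume $M,B>0$. For a cube $Q\subset \Rn$, write $Q=Q'\times I$ with $\ell(Q)=\ell(Q')=|I|$; then $F_Q=G_{Q'}\psi_I$, and adding and subtracting $G(x')\psi_I$ yields
\begin{equation*}
F(x)-F_Q=G(x')\bigl(\psi(x_n)-\psi_I\bigr) + \psi_I\bigl(G(x')-G_{Q'}\bigr),
\end{equation*}
whence
\begin{equation*}
\fint_Q|F-F_Q|\,dx \;\le\; M\,\MO(\psi,I) + \psi_I\,\MO(G,Q').
\end{equation*}

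Using the uniform continuity of $G$, I would first choose $\rho>0$ such that $\MO(G,Q')<\epsilon/2$ whenever $\ell(Q')\le\rho$. Then I would pick $\eta<\rho\epsilon/(4B)$ and invoke Lemma \ref{lem:VMO-2} to produce $\psi\in C_c^\infty(\R)$ with $\supp\psi\subset[-\eta,\eta]$, $\psi(0)=1$, $0\le\psi\le 1$, and $\|\psi\|_{\BMO(\R)}<\epsilon/(2M)$. The first summand above is then bounded by $M\|\psi\|_{\BMO(\R)}<\epsilon/2$. For the second, I split by size of $\ell(Q)$: if $\ell(Q)\le\rho$, then $\psi_I\le 1$ and the oscillation factor is $<\epsilon/2$ by the choice of $\rho$; if $\ell(Q)>\rho$, then
\begin{equation*}
\psi_I\;\le\;\frac{|I\cap[-\eta,\eta]|}{|I|}\;\le\;\frac{2\eta}{\ell(Q)}\;<\;\frac{2\eta}{\rho},
\end{equation*}
while $\MO(G,Q')\le B$, so that the product is $<\epsilon/2$ by the choice of $\eta$. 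Cubes with $I\cap[-\eta,\eta]=\emptyset$ give $F\equiv 0$ on $Q$ and contribute zero. Combining all cases gives $\|F\|_{\BMO(\Rn)}<\epsilon$, as required.

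The one point that requires care is the regime of cubes whose side length is comparable to $\eta$: these are too large for $\psi$ to be close to constant on $I$, yet too small for the support restriction on $\psi$ to make $\psi_I$ tiny. The remedy is the staged choice of parameters --- $\rho$ first, from the uniform continuity of $G$, and $\eta\ll\rho$ afterward --- so that the crude bound $\psi_I\le 1$ is only needed on cubes where the oscillation of $G$ is already controlled by the smoothness of $g$, while the quantitative bound $\psi_I\le 2\eta/\ell(Q)$ is only invoked on cubes whose side length exceeds the fixed scale $\rho$. No machinery beyond Lemmas \ref{lem:VMO-1} and \ref{lem:VMO-2} and the uniform continuity of $g$ on its compact support enters the argument.
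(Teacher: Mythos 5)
Your proof is correct and essentially identical to the paper's argument: both use the product structure of $F(x)=g(x',0)\psi(x_n)$ to write $F-F_Q$ as the sum $G(x')(\psi(x_n)-\psi_I)+\psi_I(G(x')-G_{Q'})$, bound the two pieces by $\|G\|_\infty\|\psi\|_{\BMO(\R)}$ and $\psi_I\,\MO(G,Q')$ respectively, fix a scale $\rho$ (the paper's $\delta$) at which the oscillation of $G$ is small, and then pick $\eta\ll\rho$ in Lemma \ref{lem:VMO-2} so that $\psi_I$ is tiny on cubes larger than $\rho$. The only cosmetic differences are that you use $B=\|G\|_{\BMO(\R^{n-1})}$ where the paper uses the cruder $2\|g\|_\infty$, and the paper dispatches the $n=1$ case separately (where $G$ is a constant and the $\MO(G,Q')$ term is vacuous) --- a case your reduction to $M,B>0$ does not quite cover, though it is trivial.
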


\begin{proof} 
In the case $n=1$, for $\epsilon>0$ take $\epsilon_1>0$ satisfying  $|g(0)|\epsilon _1<\epsilon$. Takeing $\psi_\ell$ in Lemma 
\ref{lem:VMO-1} so that $16/\ell<\epsilon_1$, we get  $\|g(0)\psi_\ell(x_n)\|_{\BMO(\R)}<\epsilon$. So this $\psi_\ell$ is a desired function. In the case $n\ge2$, we proceed as follows.  Let $g\in C_c^\infty(\Rn)$ and $\epsilon >0$. Then $g(x',0)\in C_c^\infty(\R^{n-1})\subset \VMO(\R^{n-1})$. Let $Q=(I',I)$ be any cube in $\Rn$, where $I'$ is a cube in $\R^{n-1}$ and $I$ be an interval in $\R$. Since $g(x',0)$ is also a $\VMO(\R^{n-1})$ function, there exists  $\delta>0$ such that
\begin{equation*}
\frac{1}{|I'|}\int_{I'}|g(x',0)-g(\cdot,0)_{I'}|dx'<\epsilon \ \text{ if }
|I'|<\delta^{n-1}.
\end{equation*}
By Lemma \ref{lem:VMO-2} for $\eta=\epsilon \delta/2$,  there exists $\psi\in C_c^\infty(\R)$ with $\supp \psi\subset [-\eta, \eta]$ such that
\begin{equation*}
\|\psi\|_{\BMO(\R)}<\epsilon,\ \psi(0)=1, \text{ and } 0\le \psi(x)\le 1,\ x\in\R.
\end{equation*}
Now we deduce that
\begin{align*}
J&:=\frac{1}{|Q|}\int_{I'}\int_I|g(x',0)\psi(x_n)-g(\cdot,0)_{I'}\psi_I|dx_ndx'
\\
&\le \frac{1}{|Q|}\int_{I'}\int_I|g(x',0)\psi(x_n)-g(x',0)\psi_I|dx_ndx'
\\
&\qquad+\frac{1}{|Q|}\int_{I'} \int_I
|g(x',0)\psi_I-g(\cdot,0)_{I'}\psi_I|dx_ndx'
\\
&\le \frac{1}{|I'|}\int_{I'}|g(x',0)|dx'
\frac{1}{|I|}\int_I|\psi(x_n)-\psi_I|dx_n
\\
&\qquad +\frac{1}{|I'|}\int_{I'}|g(x',0)-g(\cdot,0)_{I'}|dx'|\psi_I|
\\
&< \epsilon  \|g\|_{L^\infty(\Rn)}
+\frac{1}{|I'|}\int_{I'}|g(x',0)-g(\cdot,0)_{I'}|dx'|\psi_I|.
\end{align*}
Hence, if $|I|=|I'|^{1/(n-1)}<\delta$, we get $J<\epsilon \|g\|_{L^\infty(\Rn)}+\epsilon$. If $I\cap [-\eta,\eta]=\emptyset$, we see trivially $J=0$. If $I\cap [-\eta,\eta]\ne\emptyset$ and $|I|\ge \delta$, we see that
\begin{equation*}
|\psi_I|\le \frac{1}{|I|}\int_I|\psi(x_n)|dx_n\le \frac{1}{|I|}\cdot 2\eta
<\frac{2\eta}{\delta}<\epsilon,
\end{equation*}
and so we get $J< \epsilon \|g\|_{L^\infty(\Rn)} +2\epsilon \|g\|_{L^\infty(\Rn)}=3\epsilon \|g\|_{L^\infty(\Rn)}$. Modifying constants above completes the proof of Lemma \ref{lem:VMO-3}.
\end{proof}

\section{Dual spaces}\label{s:dual}

Let us recall the definitions of various Hardy spaces on the upper/lower half-space in \cite{CKS}.
\begin{definition}
Let $f$ be a function on $\R^n_+$.
\begin{enumerate}
\item[(1)]  $f$ is said to be in $H^1_r(\R^n_+)$ if there exists
$F\in H^1(\Rn)$ such that $F|_{\R^n_+} = f$. If $f \in H^1_r(\R^n_+)$, we set
$\|f\|_{H^1_r(\R^n_+)} :=\inf \big\{\|F\|_{H^1(\Rn)}: F|_{\R^n_+} = f\big\}$.
\item[(2)] $f$ is said to be in $H^1_z(\R^n_+)$ if the function $f_z$ belongs to
$H^1(\Rn)$. If $f \in H^1_z(\R^n_+)$, we set
$\|f\|_{H^1_z(\R^n_+)} := \|f_z\|_{H^1(\Rn)}$.
\item[(3)] $f$ is said to be $H^1_e(\R^n_+)$ if $f_e \in H^1(\Rn)$.
Moreover, $H^1_e(\R^n_+)$ is endowed with the norm
$\|f\|_{H^1_e(\R^n_+)} := \|f_e\|_{H^1(\Rn)}$.
\end{enumerate}

Similarly one can define the spaces $H^1_r(\R^n_-)$, $H^1_z(\R^n_-)$ and $H^1_e(\R^n_-)$.
\end{definition}

The authors in \cite{CKS} proved that 
\begin{align*}
H^1_r(\R^n_+) = H^1_o(\R^n_+) 
\ \ \text{ and } \ \ 
H^1_z(\R^n_+)=H^1_e(\R^n_+). 
\end{align*} 
A celebrated work of Fefferman and Stein \cite{FS} showed that $\BMO(\Rn)$ is the dual space of $H^1(\Rn)$. Moreover, in the half-spaces setting, the duality was established in \cite{ART} as follows
\begin{align*}
(H^1_r(\R^n_+))^* = \BMO_z(\R^n_+) 
\ \ \text{ and } \ \ 
(H^1_z(\R^n_+))^* = \BMO_r(\R^n_+). 
\end{align*}

\begin{theorem}\label{t:VMO-H}
The dual space of $\VMO_{\Delta_N}(\Rn)$ is $H^1_{\Delta_N}(\Rn)$.
\end{theorem}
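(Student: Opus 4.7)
The strategy has two stages: first reduce the global duality to half-space dualities via the direct sum decomposition of $\VMO_{\Delta_N}(\Rn)$, then establish those half-space dualities using even extension together with the classical Fefferman--Stein duality $H^1(\Rn)^* = \BMO(\Rn)$.

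By Theorems \ref{t:VMO-1}--\ref{t:VMO-2}, the map $f \mapsto (f_+,f_-)$ realizes a bounded isomorphism $\VMO_{\Delta_N}(\Rn) \cong \VMO_e(\R^n_+) \oplus \VMO_e(\R^n_-)$ with equivalent norms. In parallel with Proposition \ref{p:BMO}, I would establish (in Section \ref{s:dual}, before or alongside the theorem) the companion decomposition $H^1_{\Delta_N}(\Rn) \cong H^1_e(\R^n_+) \oplus H^1_e(\R^n_-)$, leveraging the reflection formula for $p_{t,\Delta_N}$ and a square function/tent space description of semigroup Hardy spaces that splits naturally across the hyperplane $x_n = 0$ via \eqref{e:exp}. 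Granting both decompositions, the global duality will reduce to the half-space statement
\begin{equation*}
(\VMO_e(\R^n_\pm))^* \;=\; H^1_e(\R^n_\pm),
\end{equation*}
under the natural pairing $\langle h, f\rangle = \int_{\R^n_+} h_+ f_+\, dx + \int_{\R^n_-} h_- f_-\, dx$; the two half-space facts then paste together over the direct sum.

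To prove the half-space duality for $\R^n_+$ (the lower case being identical), the even extension $f \mapsto f_e$ embeds $\VMO_e(\R^n_+)$ isometrically as the closed subspace $V := \{g \in \VMO(\Rn) : g(x',-x_n) = g(x',x_n)\text{ a.e.}\}$ of $\VMO(\Rn)$. Given $\Lambda \in (\VMO_e(\R^n_+))^*$, Hahn--Banach extends it to some $\widetilde\Lambda \in \VMO(\Rn)^*$, which by the Coifman--Weiss duality is represented by an $h \in H^1(\Rn)$. Decomposing $h = h^{\mathrm{even}} + h^{\mathrm{odd}}$ with respect to the reflection $x \mapsto \widetilde{x}$, the odd part annihilates $V$, while the reflection is an isometry of $H^1(\Rn)$, so $h^{\mathrm{even}} \in H^1(\Rn)$ with controlled norm. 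Its restriction to $\R^n_+$ belongs to $H^1_e(\R^n_+)$ and represents $\Lambda$. The reverse inclusion is immediate: for $\varphi \in H^1_e(\R^n_+)$ and $f \in \VMO_e(\R^n_+)$, the pairing $\tfrac12 \langle \varphi_e, f_e \rangle_{H^1,\BMO}$ is controlled by $\|\varphi\|_{H^1_e(\R^n_+)}\|f\|_{\VMO_e(\R^n_+)}$.

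The principal obstacle I expect is the first step: giving a concrete identification of $H^1_{\Delta_N}(\Rn)$ with the direct sum $H^1_e(\R^n_+) \oplus H^1_e(\R^n_-)$. The $\BMO_{\Delta_N}$ analogue required substantial effort in \cite{DDSY}, and the Hardy-space counterpart can be approached either by pre-duality with the established $\BMO_{\Delta_N}$ decomposition, or directly through an atomic/tent-space characterization that respects the reflection structure. A secondary technical point is ensuring uniqueness of the $H^1_{\Delta_N}$ representative in the final statement, which follows from Theorem \ref{t:Cc} because $C_c^\infty(\Rn) \subset \VMO_{\Delta_N}(\Rn)$ separates the points of $H^1_{\Delta_N}(\Rn)$.
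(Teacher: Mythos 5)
The paper proves this theorem in one line, by citing Theorem~4.1 of \cite{DDSTY}, which establishes $(\VMO_L)^* = H^1_L$ for a general operator $L$ with Gaussian heat kernel bounds, via tent-space duality. Your proposal is a genuinely different, bespoke argument that exploits the reflection structure of $\Delta_N$: decompose both sides over the two half-spaces, reduce to $(\VMO_e(\R^n_\pm))^* = H^1_e(\R^n_\pm)$, and obtain that by Hahn--Banach from the isometric even-extension embedding into $\VMO(\Rn)$ together with Fefferman--Stein/Coifman--Weiss duality. The half-space Hahn--Banach step is clean and correct (the odd part of the $H^1(\Rn)$ representative does annihilate the even subspace, and the reflection is an $H^1$-isometry, so the even part restricts to an $H^1_e(\R^n_+)$ representative), and in fact it gives a more streamlined proof of the paper's later Theorem~\ref{t:VMOr-H} than the paper's own argument via $\BMO$-approximation. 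What the paper's citation buys is generality and brevity; what your route buys is a self-contained, concrete proof internal to the reflection framework of this specific operator.

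The genuine gap, which you correctly flag, is the companion decomposition $H^1_{\Delta_N}(\Rn) \cong H^1_e(\R^n_+) \oplus H^1_e(\R^n_-)$ with equivalence of norms. Nothing in the paper establishes this; the $\BMO$-side decomposition (Proposition~\ref{p:BMO}, from \cite{DDSY}) required its own nontrivial argument, and the Hardy-space analogue is of comparable difficulty. It cannot be obtained by ``pre-duality with the established $\BMO_{\Delta_N}$ decomposition,'' since that would presuppose knowing $H^1_{\Delta_N}$ is a predual of $\BMO_{\Delta_N}$ in a form compatible with the splitting, which is close to what you are trying to prove. The direct atomic/tent-space route through \eqref{e:exp} and the reflection formula for $p_{t,\Delta_N}$ is the right idea and is consistent with what is known (e.g.\ the characterizations in \cite{LW}), but it is a separate piece of work that would need to be carried out in full. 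Until that lemma is supplied, your reduction does not close. This is exactly the sort of technical overhead the paper avoids by invoking the general $L$-theory of \cite{DDSTY}.

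Two minor points. First, keep track of the constant~$2$ (or $\tfrac12$) in the pairing when passing between $\int_{\R^n_+}\varphi f\,dx$ and $\int_{\Rn}\varphi_e f_e\,dx$; it is harmless but should be stated. Second, your appeal to Theorem~\ref{t:Cc} for uniqueness of the representative is fine and non-circular, since that theorem is proved before the duality section.
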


\begin{proof}
The proof can be found in Theorem 4.1 \cite{DDSTY}, in which a more general result about the operator $L$ was given.
\end{proof}

Based on the duality above, let us investigate the weak*-convergence in $H^1_{\Delta_N}(\Rn)$.
\begin{theorem}\label{t:weak}
Suppose that $\{f_k\}_{k \geq 1}$ is a bounded sequence in $H^1_{\Delta_N}(\Rn)$, and that $\lim\limits_{k \to \infty} f_k(x)=f(x)$ a.e. $x \in \Rn$. Then $f \in H^1_{\Delta_N}(\Rn)$ and $\{f_k\}_{k \geq 1}$ weak*-converges to $f$, that is,
\begin{align*}
\lim_{k \to \infty} \int_{\Rn} f_k(x) \phi(x) dx
=\int_{\Rn} f(x) \phi(x) dx, \ \ \forall \phi \in \VMO_{\Delta_N}(\Rn), 
\end{align*}
where the integrals denote the dual form between $H^1_{\Delta_N}(\R^n)$ and $\BMO_{\Delta_N}(\R^n)$ in general.
\end{theorem}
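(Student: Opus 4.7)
The plan is to combine three ingredients from the paper: the duality $(\VMO_{\Delta_N}(\Rn))^{*}=H^1_{\Delta_N}(\Rn)$ established in Theorem~\ref{t:VMO-H}; the density of $C_c^\infty(\Rn)$ in $\VMO_{\Delta_N}(\Rn)$ with respect to the $\BMO_{\Delta_N}$-norm (Theorem~\ref{t:Cc}); and a Fatou-type lower semicontinuity of the $H^1_{\Delta_N}$-norm under a.e.\ convergence. The scheme mirrors the classical Jones--Journ\'e weak$^{*}$-compactness argument for $H^1(\Rn)$.

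First, to place $f$ in $H^1_{\Delta_N}(\Rn)$: since $\Delta_N$ admits a heat semigroup with Gaussian bounds, $H^1_{\Delta_N}(\Rn)$ has an equivalent characterization via the non-tangential (or grand) maximal function associated with $\{e^{-t\Delta_N}\}_{t>0}$. The pointwise a.e.\ convergence $f_k\to f$ and Fatou's lemma applied to this maximal function yield
\[
\|f\|_{H^1_{\Delta_N}(\Rn)}\leq\liminf_{k\to\infty}\|f_k\|_{H^1_{\Delta_N}(\Rn)}\leq M,
\]
where $M:=\sup_k\|f_k\|_{H^1_{\Delta_N}(\Rn)}<\infty$; in particular $f\in H^1_{\Delta_N}(\Rn)$.

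Now fix $\phi\in\VMO_{\Delta_N}(\Rn)$ and $\epsilon>0$. By Theorem~\ref{t:Cc}, select $\psi\in C_c^\infty(\Rn)$ with $\|\phi-\psi\|_{\BMO_{\Delta_N}(\Rn)}<\epsilon$, and split the dual pairing as
\[
\langle f_k-f,\phi\rangle=\langle f_k-f,\phi-\psi\rangle+\langle f_k-f,\psi\rangle.
\]
Duality controls the first piece by $(\|f_k\|_{H^1_{\Delta_N}}+\|f\|_{H^1_{\Delta_N}})\,\|\phi-\psi\|_{\BMO_{\Delta_N}}\leq 2M\epsilon$. Since $H^1_{\Delta_N}(\Rn)\hookrightarrow L^1(\Rn)$, the second pairing equals the ordinary integral $\int_{\supp\psi}(f_k-f)\psi\,dx$; combining $f_k\to f$ a.e.\ with the uniform $L^1$-bound of the maximal functions of $f_k$, a Vitali-type argument (or, equivalently, atomic truncation) forces $\int(f_k-f)\psi\,dx\to 0$ as $k\to\infty$. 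Hence $\limsup_k|\langle f_k-f,\phi\rangle|\leq 2M\epsilon$, and letting $\epsilon\to 0$ concludes.

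The principal obstacle is the last step above, namely transferring a.e.\ convergence into weak convergence against $C_c^\infty$ test functions, which requires uniform equiintegrability of $\{f_k\}$ on bounded sets. In the classical setting this is the heart of the Jones--Journ\'e theorem, handled through the atomic decomposition (atoms are bounded, have controllable supports, and satisfy cancellation). Here the same mechanism transfers to $H^1_{\Delta_N}(\Rn)$ via the half-space decomposition $f_k=(f_k)_+\oplus(f_k)_-$ dictated by Theorems~\ref{t:VV}--\ref{t:VMO-2}, combined with the atomic theory for the half-space Hardy spaces $H^1_z(\R^n_\pm)$ already featured in Section~\ref{s:dual}; thus the global result is reduced to two classical weak$^{*}$-convergence statements on $\Rn$.
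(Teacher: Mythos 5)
Your overall scheme (duality, density of $C_c^\infty$, splitting the pairing) is the right one and agrees with the paper's, but there is a real gap in the first step, and the closing paragraph is more machinery than is needed.

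The Fatou argument as stated does not go through. To conclude $\|f\|_{H^1_{\Delta_N}}\le\liminf_k\|f_k\|_{H^1_{\Delta_N}}$ from a maximal-function characterization, you need the pointwise inequality $\mathcal{M}f(x)\le\liminf_k\mathcal{M}f_k(x)$ a.e., which in turn requires $e^{-t\Delta_N}f_k(x)\to e^{-t\Delta_N}f(x)$ for each fixed $(x,t)$. That pointwise convergence of the heat extensions does \emph{not} follow from $f_k\to f$ a.e.\ plus a uniform $L^1$ bound: you would have to pass the limit inside the convolution against the heat kernel, and a.e.\ convergence with an $L^1$ bound allows mass to escape (only the extra cancellation encoded in the $H^1$ bound ultimately rules this out, and establishing that is precisely the nontrivial content of Jones--Journ\'e). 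So the membership $f\in H^1_{\Delta_N}(\Rn)$ is not free; it needs a genuine argument. The paper supplies this cleanly: by Banach--Alaoglu, a subsequence $f_{k_j}$ weak$^*$-converges in $H^1_{\Delta_N}=(\VMO_{\Delta_N})^*$ to some $g\in H^1_{\Delta_N}$; applying the classical Jones--Journ\'e theorem to $\{f_k\}$ viewed inside $H^1(\Rn)$ (using the embedding $H^1_{\Delta_N}(\Rn)\hookrightarrow H^1(\Rn)$) gives weak$^*$-convergence of $f_{k_j}$ to $f$ in $H^1(\Rn)$; testing both against $C_c^\infty$ identifies $f=g$ a.e., whence $f\in H^1_{\Delta_N}(\Rn)$.

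Your final paragraph is also an unnecessary detour. You propose to re-derive the Jones--Journ\'e mechanism for $H^1_{\Delta_N}$ by decomposing into the half-space Hardy spaces and using their atomic theory. This can be made to work, but the paper avoids it entirely by the observation already mentioned: since $H^1_{\Delta_N}(\Rn)\hookrightarrow H^1(\Rn)$ continuously, the sequence $\{f_k\}$ is already bounded in $H^1(\Rn)$ and converges a.e., so the classical Jones--Journ\'e theorem applies verbatim to give $\int f_k\psi\,dx\to\int f\psi\,dx$ for every $\psi\in C_c^\infty(\Rn)$. The splitting step $\langle f_k-f,\phi\rangle=\langle f_k-f,\phi-\psi\rangle+\langle f_k-f,\psi\rangle$ and the duality estimate on the first term are exactly as in the paper. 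In short: replace the Fatou claim by Banach--Alaoglu + Jones--Journ\'e identification, and replace the half-space re-proof of Jones--Journ\'e by a direct appeal to Jones--Journ\'e on $H^1(\Rn)$.
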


\begin{proof}
Without loss of generality, we may assume that $\sup\limits_{k \geq 1}\|f_k\|_{H^1_{\Delta_N}(\Rn)} \leq 1$. The Banach-Alaoglu theorem states that if $X$ a Banach space, then the closed unit ball in $X^*$ is compact in the weak* topology. Together with Theorem \ref{t:VMO-H}, it yields that there exists a subsequence $\{f_{k_j}\}$ of $\{f_k\}$ such that $\{f_{k_j}\}$ weak*-converges to $g$ for some $g \in H^1_{\Delta_N}(\Rn)$. Thus, we have
\begin{equation*}
\lim_{j \to \infty} \int_{\Rn} f_{k_j}(x) \phi(x) dx
=\int_{\Rn} g(x) \phi(x) dx, \ \ \forall \phi \in C_{c}^\infty(\Rn).
\end{equation*}
Since $\|f_k\|_{H^1(\Rn)}\lesssim \|f_k\|_{H^1_{\Delta_N}(\Rn)}\le 1$, Jones and Journ\'e's theorem \cite{JJ} implies that $f_{k_j}$ weak*-converges to $f$ in $H^1(\Rn)$. So, there holds
\begin{equation*}
\lim_{j \to \infty} \int_{\Rn} f_{k_j}(x) \phi(x) dx
=\int_{\Rn} f(x) \phi(x) dx, \ \ \forall \phi \in C_{c}^\infty(\Rn).
\end{equation*}
And hence
\begin{equation*}
\int_{\Rn} f(x) \phi(x) dx
=\int_{\Rn} g(x) \phi(x) dx, \ \ \forall \phi \in C_{c}^\infty(\Rn).
\end{equation*}
That is, $f=g$ as distributions. Since both are in $L_{\rm loc}^1(\Rn)$, we have $f=g$ a.e in $\Rn$, and it follows $f \in H^1_{\Delta_N}(\Rn)$.

Let $\phi \in \VMO_{\Delta_N}(\Rn)$. It follows from Theorem \ref{t:CCC} that for any $\epsilon>0$, there exists $\phi_{\epsilon} \in C_c^{\infty}(\Rn)$ such that $\|\phi-\phi_{\epsilon}\|_{\BMO_{\Delta_N}(\Rn)} < \epsilon$. By Theorem 3.13 in \cite{LW}, we have that $H^1_{\Delta_N}(\Rn) \subsetneq H^1(\Rn)$. Then using weak*-convergence in $H^1(\Rn)$ given in \cite{JJ}, we obtain
\begin{align*}
\lim_{k \to \infty} \int_{\Rn} f_k(x) \phi_{\epsilon}(x) dx
=\int_{\Rn} f(x) \phi_{\epsilon}(x) dx.
\end{align*}
Consequently, there exists $K \in \N_+$ such that for any $k \geq K$
\begin{align*}
\bigg|\int_{\Rn} (f_k-f) \phi dx \bigg|
& \leq \bigg|\int_{\Rn} (f_k-f) \phi_{\epsilon} dx\bigg|
+ \bigg|\int_{\Rn} (f_k-f) (\phi-\phi_{\epsilon}) dx\bigg|
\\ 
&\leq \epsilon + \big(\|f_k\|_{H^1_{\Delta_N}(\Rn)} + \|f\|_{H^1_{\Delta_N}(\Rn)}\big)
\|\phi-\phi_{\epsilon}\|_{\BMO_{\Delta_N}(\Rn)}
\\ 
&\leq \epsilon + \big(1 + \|f\|_{H^1_{\Delta_N}(\Rn)}\big) \epsilon
\lesssim  \epsilon.
\end{align*}
The proof is complete. 
\end{proof}

\begin{theorem}\label{t:VMOe-H}
The dual space of $\VMO_z(\R^n_+)$ is $H^1_r(\R^n_+)$.
\end{theorem}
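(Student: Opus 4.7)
The strategy is to adapt the functional-analytic dualization used for Theorem \ref{t:VMO-H} (following the philosophy of \cite{CW} and \cite{C}), with the density statement \eqref{e:Vz} from Theorem \ref{t:CCC} playing the crucial role. Concretely, realize $\VMO_z(\R^n_+)$ as a closed isometric subspace of $\VMO(\Rn)$ via the zero-extension map $f \mapsto f_z$, and call its image $Y \subset X := \VMO(\Rn)$. The identity $\|f\|_{\VMO_z(\R^n_+)} = \|f_z\|_{\VMO(\Rn)}$ is built into the very definition, and $Y$ is closed because $\VMO_z(\R^n_+)$ is complete. Coifman--Weiss gives $X^{\ast} = H^1(\Rn)$, and the standard Hahn--Banach identification for a closed subspace yields
\[
\VMO_z(\R^n_+)^{\ast} \;\cong\; Y^{\ast} \;\cong\; X^{\ast}/Y^{\perp} \;=\; H^1(\Rn)/Y^{\perp}
\]
isometrically, where $Y^{\perp} := \{g \in H^1(\Rn) : \langle g, F\rangle = 0 \text{ for all } F \in Y\}$.

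On the other hand, by the very definition of the $H^1_r$ norm, the restriction map $\pi: H^1(\Rn) \to H^1_r(\R^n_+)$, $g \mapsto g|_{\R^n_+}$, is a metric surjection whose kernel is $\ker\pi = \{g \in H^1(\Rn): g = 0 \text{ a.e.\ on } \R^n_+\}$; hence $H^1_r(\R^n_+) \cong H^1(\Rn)/\ker\pi$ isometrically. The theorem therefore reduces to proving the set equality $Y^{\perp} = \ker\pi$.

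The inclusion $Y^{\perp} \subseteq \ker\pi$ is immediate: every $\phi \in C_c^{\infty}(\R^n_+)$ extended by zero lies in $Y$, so a functional $g \in Y^{\perp}$ must satisfy $\int_{\Rn} g\,\phi\,dx = 0$ for all such $\phi$, which forces $g|_{\R^n_+} = 0$ a.e. For the reverse inclusion $\ker\pi \subseteq Y^{\perp}$, fix $g \in H^1(\Rn)$ with $g = 0$ a.e.\ on $\R^n_+$, and any $F = f_z \in Y$. Using the density \eqref{e:Vz}, choose $h_k \in C_c^{\infty}(\R^n_+)$ with $\|F - (h_k)_z\|_{\BMO(\Rn)} \to 0$. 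Since each $(h_k)_z$ is smooth, bounded, and supported in a compact subset of $\R^n_+$, the $H^1$--$\BMO$ pairing reduces to a genuine integral:
\[
\langle g, (h_k)_z\rangle \;=\; \int_{\Rn} g(x)\,(h_k)_z(x)\,dx \;=\; 0,
\]
by disjointness of the supports. Continuity of the pairing in the second variable then yields $\langle g, F\rangle = 0$, i.e.\ $g \in Y^{\perp}$.

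Combining these identifications gives $\VMO_z(\R^n_+)^{\ast} \cong H^1(\Rn)/\ker\pi \cong H^1_r(\R^n_+)$ isometrically, as desired. The main obstacle is the direction $\ker\pi \subseteq Y^{\perp}$: a generic $F \in Y$ is neither bounded nor compactly supported, so $\langle g, F\rangle$ cannot be interpreted naively as an integral, and the elementary disjoint-support cancellation is not directly available. The density statement \eqref{e:Vz} from Theorem \ref{t:CCC} is precisely the device that reduces the pairing to test functions in $C_c^{\infty}(\R^n_+)$, for which both the integral representation and the vanishing are transparent.
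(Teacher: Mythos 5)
Your proof is correct and follows essentially the same route as the paper: realize $\VMO_z(\R^n_+)$ as a closed subspace $\mathfrak{B}_1$ of $\mathfrak{B}=\VMO(\Rn)$, invoke $\mathfrak{B}_1^{*}\cong\mathfrak{B}^{*}/\mathfrak{B}_1^{\perp}$ together with $(\VMO(\Rn))^{*}=H^1(\Rn)$, and identify the annihilator with $\{g\in H^1(\Rn):g\equiv0\text{ a.e.\ on }\R^n_+\}$ so that the quotient is precisely $H^1_r(\R^n_+)$. The only minor difference in presentation is that for the inclusion $Y^{\perp}\subseteq\ker\pi$ the paper approximates $\mathbf{1}_Q$ for a cube $Q\subset\R^n_+$ by an increasing sequence in $C_c^{\infty}$ and applies dominated convergence plus Lebesgue differentiation, whereas you argue directly from the vanishing of $\int g\phi$ against all test functions in $C_c^\infty(\R^n_+)$; and for the reverse inclusion $\ker\pi\subseteq Y^{\perp}$ you are explicit about passing through the density \eqref{e:Vz} of $C_c^\infty(\R^n_+)$ in $\VMO_z(\R^n_+)$ before concluding via continuity of the $H^1$--$\BMO$ pairing, a step the paper leaves implicit. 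Both variants are sound, and yours is arguably a bit cleaner on that last point.
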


\begin{proof}
We will use a functional analysis method. Let $\mathfrak{B}$ be a Banach space and $\mathfrak{B}^*$ be the dual space. Suppose that $\mathfrak{B}_1$ is a closed subspace of  $\mathfrak{B}$. Denote 
\[
\mathfrak{B}_1^{\perp} = \big\{\mathscr{L} \in \mathfrak{B}^*: \mathscr{L}|_{\mathfrak{B}_1} = 0\big\}.
\]
Then $\mathfrak{B}_1^{\perp}$ is a closed subspace of $\mathfrak{B}^*$. It was given in \cite[p. 89]{RS} that
\begin{equation*}
\mathfrak{B}_{1}^{*} \cong \mathfrak{B}^{*} /\mathfrak{B}_{1}^{\perp}.
\end{equation*}
Taking 
\begin{align*}
\mathfrak{B}=\VMO(\Rn)  \ \text{ and } \  \mathfrak{B}_{1}=\VMO_z(\R^n_+),
\end{align*} 
we have
\begin{equation*}
\mathfrak{B}_{1}^{\perp}=\big\{\mathscr{L} \in H^1(\Rn) :
\mathscr{L}(f)=0, \forall f \in \VMO_z(\R^n_+)\big\}
\end{equation*}
provided by $(\VMO(\Rn))^*=H^1(\Rn)$. Consequently, for any $\mathscr{L} \in \mathfrak{B}_{1}^{\perp}$, there exists $g \in H^1(\Rn)$ such that
\begin{equation}\label{e:Lf}
\mathscr{L}(f)=\int_{\Rn} f(x) g(x) dx
=\int_{\R^n_+} f(x) g(x) dx=0,\ \ \forall f \in \VMO_z(\R^n_+), 
\end{equation}
where the integrals denote the dual form between $H^1_{r}(\R^n_+)$ and $\BMO_{z}(\R^n_+)$ in general.

Let $Q$ be a cube in $\R^n_+$. For $Q$, there exists a sequence $\{\varphi_j\}_j\subset C_c^\infty(\Rn)$ with support in $\R_+^n$ such that 
\begin{equation*}
0 \le \varphi_1(x) \le  \varphi_2(x) \le \cdots \le \mathbf{1}_Q(x), \ \ \ 
\lim_{j \to \infty} \varphi_j(x) = \mathbf{1}_Q(x), x \in \Rn. 
\end{equation*}
Then, $\varphi_j\in  \VMO_z(\R_+^n)$ and so by \eqref{e:Lf} we get  $\int_\Rn\varphi_j(x)g(x)dx=0$, $j=1,2,\ldots$.  However, since $g \in H^1(\Rn) \subsetneq L^1(\Rn)$, it follows from Lebesgue's dominated convergence theorem that 
\begin{equation*}
0=\lim_{j\to\infty}\int_\Rn\varphi_j(x)g(x)dx=\int_Q g(x)dx.
\end{equation*}
Thus, we obtain by Lebesgue's differentiation theorem that 
\begin{equation*}
g(x)=0\ \text{ a.e. }x\in \R_+^n. 
\end{equation*}
It indicates that 
\begin{equation*}
\mathfrak{B}_{1}^{\perp} \cong \{g \in H^1(\Rn) : g \equiv 0\text { on } \R^n_+\}.
\end{equation*}
By definition, it yield that
\begin{equation*}
\mathfrak{B}^{*} / \mathfrak{B}_{1}^{\perp} \cong H^1(\Rn)/\{g \in H^1(\Rn): g \equiv 0 \text{ on } \R^n_+ \}
=H^1_{r}(\R^n_+).
\end{equation*}
This shows that $H^1_r(\R^n_+)$ is the dual space of $\VMO_z(\R^n_+)$. 
\end{proof}

\begin{theorem}\label{t:VMOr-H}
The dual space of $\VMO_r(\R^n_+)$ is $H^1_z(\R^n_+)$.  
\end{theorem}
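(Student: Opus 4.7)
The plan is to mirror the functional-analytic argument used in Theorem \ref{t:VMOe-H}, but this time I would realize $\VMO_r(\R^n_+)$ as a \emph{quotient} of $\VMO(\Rn)$ rather than as a closed subspace, and invoke the dual form of the isomorphism, namely $(\mathfrak{B}/\mathfrak{B}_1)^* \cong \mathfrak{B}_1^{\perp}$. Concretely, I would take $\mathfrak{B} = \VMO(\Rn)$ and
\[
\mathfrak{B}_1 = \big\{F \in \VMO(\Rn): F|_{\R^n_+} = 0 \text{ a.e.}\big\},
\]
which is a closed subspace. By Definition \ref{d:VMO}, the restriction map $F \mapsto F|_{\R^n_+}$ is a surjection from $\VMO(\Rn)$ onto $\VMO_r(\R^n_+)$ whose norm is precisely the quotient norm, so $\VMO_r(\R^n_+) \cong \VMO(\Rn)/\mathfrak{B}_1$ isometrically.

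Combining this identification with the Fefferman--Stein/Coifman--Weiss duality $(\VMO(\Rn))^* = H^1(\Rn)$ yields $(\VMO_r(\R^n_+))^* \cong \mathfrak{B}_1^{\perp}$, where
\[
\mathfrak{B}_1^{\perp} = \bigg\{g \in H^1(\Rn): \int_{\Rn} F(x) g(x)\, dx = 0 \text{ for all } F \in \mathfrak{B}_1\bigg\}.
\]
The remaining task is to identify $\mathfrak{B}_1^{\perp}$ with $H^1_z(\R^n_+)$. One inclusion is immediate: any $g \in H^1(\Rn)$ that vanishes a.e.\ on $\R^n_-$ pairs to zero against every $F \in \mathfrak{B}_1$, and its restriction to $\R^n_+$ sits in $H^1_z(\R^n_+)$ with matching norm, since $g$ is precisely the zero extension of $g|_{\R^n_+}$.

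For the converse, given $g \in \mathfrak{B}_1^{\perp}$ and any cube $Q \subset \R^n_-$, I would approximate $\mathbf{1}_Q$ pointwise from below by a monotone sequence $\{\varphi_j\} \subset C_c^{\infty}(\R^n_-) \subset \mathfrak{B}_1$ and apply Lebesgue's dominated convergence theorem (legal since $g \in H^1(\Rn) \subset L^1(\Rn)$) to conclude $\int_Q g\, dx = 0$. Lebesgue differentiation then forces $g = 0$ a.e.\ on $\R^n_-$, so $g$ is the zero extension of $g|_{\R^n_+} \in H^1_z(\R^n_+)$. The main obstacle I anticipate is the bookkeeping needed to verify that $\VMO_r(\R^n_+) \cong \VMO(\Rn)/\mathfrak{B}_1$ is genuinely isometric, so that the dual-side norm equivalence matches $\|g\|_{H^1_z(\R^n_+)}$ up to absolute constants; once this is in hand, the approximation step is a direct mirror of the argument in Theorem \ref{t:VMOe-H} with the roles of $\R^n_+$ and $\R^n_-$ interchanged.
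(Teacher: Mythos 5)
Your proposal is essentially correct, but it follows a genuinely different and considerably more economical route than the paper. The paper proves this theorem by first showing $H^1_z(\R^n_+)$ injects into $(\VMO_r(\R^n_+))^*$, then establishing that it is a total set of functionals (hence weak*-dense via Dunford--Schwartz), invoking the Banach--Steinhaus theorem to get uniform boundedness, proving the delicate lower bound $\|f\|_{H^1_z(\R^n_+)} \lesssim \sup\{|\int fg\,dx| : g \in C_c^\infty(\R^n_+),\ \|g\|_{\BMO_r} \le 1\}$ via an atomic decomposition combined with the $\BMO$ approximation machinery of Section~\ref{s:homo}, and finally extracting a representing element through Jones--Journ\'e weak*-convergence in $H^1(\Rn)$. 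You bypass all of this by realizing $\VMO_r(\R^n_+)$ as a quotient $\VMO(\Rn)/\mathfrak{B}_1$ and applying the annihilator duality $(\mathfrak{B}/\mathfrak{B}_1)^* \cong \mathfrak{B}_1^\perp$, which is exactly the dual of the route the paper itself takes in Theorem~\ref{t:VMOe-H}; this is cleaner, and it also hands you the representation formula $\mathscr{L}(g) = \langle g, f\rangle$ for free. What your approach gives up is the independent quantitative estimate \eqref{e:f-H1z} and the showcase of Proposition~\ref{p:appr-BMO}, which the authors flag in the abstract as a stand-alone contribution.

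One point you should not state without justification: $\mathfrak{B}_1 = \{F \in \VMO(\Rn) : F|_{\R^n_+} = 0 \text{ a.e.}\}$ is \emph{not} closed if $\BMO(\Rn)$ is taken literally as a space of functions with the seminorm $\|\cdot\|_{\BMO}$. If $F_k \in \mathfrak{B}_1$ and $\|F_k - F\|_{\BMO} \to 0$, then for any cube $Q \subset \R^n_+$ one gets $\fint_Q |F - F_Q|\,dx \to 0$, a quantity independent of $k$, so $F$ is merely \emph{constant} on $\R^n_+$, not necessarily zero. Thus $\overline{\mathfrak{B}_1} = \{F : F|_{\R^n_+} \text{ is constant}\}$. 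This is harmless for your argument --- the annihilator satisfies $\mathfrak{B}_1^\perp = (\overline{\mathfrak{B}_1})^\perp$ since $\langle g, 1 \rangle = 0$ for $g \in H^1(\Rn)$, and the quotient duality on a seminormed space factors through the Hausdorff quotient automatically --- but either work with $\BMO$ modulo constants from the outset (as one implicitly must, for $(\VMO)^* = H^1$ to even be a Banach-space statement), or replace $\mathfrak{B}_1$ by its closure. Similarly, your sentence ``pairs to zero against every $F \in \mathfrak{B}_1$'' treats $\int Fg\,dx$ as an absolutely convergent integral when it is in fact the $H^1$--$\BMO$ duality pairing; the conclusion is correct but deserves a line noting that one computes via truncations or atoms supported off $\R^n_+$. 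With these two points tightened, your argument is complete and correct.
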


\begin{proof}

For any $f \in H^1_z(\R^n_+)$, we define a linear operator by
\[
\mathscr{L}(g) := \int_{\R^n_+} f(x) g(x) dx, \ \ \forall g \in \VMO_r(\R^n_+), 
\]
where the integral in the right side of the equality denotes the dual form between $H^1_z(\R^n_+)$ and $\BMO_r(\R^n_+)$ in general. Let $G \in \VMO(\Rn)$ be an  arbitrary function such that $G|_{\R^n_+}=g$. It follows from Theorem 4.1 in \cite{CW} that the dual space of $\VMO(\Rn)$ is the space $H^1(\Rn)$. Thus, we have
\begin{align*}
|\mathscr{L}(g)| = \bigg|\int_{\Rn} f_z(x) G(x) dx \bigg|
\leq \|G\|_{\VMO(\Rn)} \|f_z\|_{H^1(\Rn)} =\|G\|_{\VMO(\Rn)} \|f\|_{H^1_z(\R^n_+)},
\end{align*}
and hence, 
\begin{align*}
|\mathscr{L}(g)| 
\leq \inf_{G \in \VMO(\Rn):G|_{\R^n_+}=g} \|G\|_{\VMO(\Rn)} \|f\|_{H^1_z(\R^n_+)}
=\|g\|_{\VMO_r(\R^n_+)} \|f\|_{H^1_z(\R^n_+)},
\end{align*}
which gives that
$||\mathscr{L}|| \leq \|f\|_{H^1_z(\R^n_+)}$.

For the converse, we follow the strategy in \cite[p. 639]{CW}. If $\langle f, g \rangle = 0$ for all $ f \in H^1_z(\R^n_+)$, then $g$ is the zero element of $\BMO_r(\R^n_+)$ and hence $g$ must be the zero of $\VMO_r(\R^n_+)$. Thus, $H^1_z(\R^n_+)$ is a total set of functionals on $\VMO_r(\R^n_+)$. We here need a theorem in the functional analysis from \cite[p. 439]{DS}. Let $X$ be a locally convex linear topological space and $Y$ be a linear subspace of $X^*$. Then $Y$ is $X$-dense in $X^*$ if and only if $Y$ is a total set of functionals on $X$. Thus, $H^1_z(\R^n_+)$ is weak* dense in $(\VMO_r(\R^n_+))^*$. It yields that for each $\mathscr{L} \in (\VMO_r(\R^n_+))^*$, there exists a sequence $\{f_k\} \subset H^1_z(\R^n_+)$ such that 
\begin{align*}
\mathscr{L}(g)=\lim_{k \to \infty} \int_{\R^n_+} f_k g ~ dx, \quad \forall g \in \VMO_r(\R^n_+). 
\end{align*}

Let us recall the Banach-Steinhaus theorem. Let $X$ be a Banach space and $Y$ a normed vector space. Suppose that $\Gamma$ is a collection of continuous linear operators from $X$ to $Y$. If for any $x \in X$ one has $\sup\limits_{T \in \Gamma} \|T(x)\|_{Y} < \infty$, then there holds 
\begin{align*} 
\sup_{T \in \Gamma} \|T\|_{X \to Y} =\sup_{T \in \Gamma,\|x\|_X=1} \|T(x)\|_{Y} < \infty. 
\end{align*}
From this, we obtain 
\begin{equation*}
\sup_{k \geq 1} \|f_k\|_{(\VMO_r(\R_+^n))^*} \le c_0\ \text{ for some } c_0>0,
\end{equation*}
and hence
\begin{equation}\label{e:sup-k}
\sup_{k \geq 1} \sup_{\substack{g \in C_c^\infty(\R_+^n) \\ 
\|g\|_{\BMO_r(\R_+^n)}\le 1}} \bigg|\int_{\R^n_+}f_k(x) g(x) dx\bigg|\le c_0.
\end{equation}
We shall show that for any $f\in H_z^1(\R_+^n)$, 
\begin{equation}\label{e:f-H1z}
\|f\|_{H^1_z(\R^n_+)}
\le \sup_{\substack{g \in C_c^\infty(\R_+^n) \\ \|g\|_{\BMO_r(\R_+^n)}\le 1}}
\bigg|\int_{\Rn}f(x)g(x) dx\bigg|. 
\end{equation}
Let $f\in H_z^1(\R_+^n)$. In view of Proposition 1.7 in \cite{CKS}, there exists an atomic decomposition as follows
\begin{equation*}
f=\sum_{j=1}^{\infty}\lambda_j a_j,
\end{equation*}
with $\|f\|_{H_z^1(\R_+^n)}\cong \sum_{j=1}^{\infty}|\lambda_j|$, where $\{a_j\}$ are standard $1$-atoms.  Let $\epsilon>0$, and fix $N \in \N_+$ satisfying $\sum_{j>N} |\lambda_j| < \epsilon $. Set 
\begin{equation*}
f^N := \sum_{j=1}^{N} \lambda_j a_j.
\end{equation*}
Then it holds 
\begin{equation*}
\|f\|_{H_z^1(\R_+^n)} \le \|f^N\|_{H_z^1(\R_+^n)}+\epsilon.
\end{equation*}
Since $(H_z^1(\R_+^n))^*=\BMO_r(\R_+^n)=\BMO_e(\R_+^n)$, by the Hahn-Banach theorem there exists $g\in \BMO_e(\R_+^n)$ such that 
\begin{equation*}
\|f^N\|_{H_z^1(\R_+^n)} = \langle g, f^N\rangle=\int_{\R_+^n} g(x)f^N(x) dx. 
\end{equation*}
(See, for example, Corollary IV-6-2, \cite[p.108]{Y}.) We know that there exist $c_1>0$ and $g_j\in C_c(\R_+^n)$ such that 
\begin{equation}\label{e:ggg}
\begin{split}
&g_j(\widetilde x)=g_j(x), \\
&\|g_j\|_{\BMO_e(\R_+^n)}\le c_1\|g\|_{\BMO_e(\R_+^n)},\\
&|g_j(x)|\le \sup_{0<r<1} \fint_{Q(x,r)}|g_e(y)|dy, \ x\in \R_+^n,\\
&\lim_{j\to\infty}g_j(x)=g_e(x), \ \text{ a.e }x\in \R_+^n.
\end{split}
\end{equation}
In fact, using the notations in Lemma \ref{lem:VMO-1}, we set 
\begin{equation*}
\varphi(x):=\varphi_0(|x|)\ 
\text{ and }\Phi_j(x):=\psi_{j^2}(2^{j^2+1}|x|/j),
\end{equation*}
Then we see that 
\begin{equation*}
\|\Phi_j\|_{\BMO(\Rn)}\le 4/{j^2},\ \text{ and } \Phi_j(x)=1 \ \text{ for }|x|\le j.
\end{equation*}
Define 
\begin{equation*}
g_j(x)=j^n\varphi(j\,\cdot\,)*[g_e]_j(x)\Phi_j(x),
\end{equation*}
where 
\begin{equation*}
[h(x)]_j=
\begin{cases}
h(x), & \text{ if } |h(x)| \le j, \\ 
j h(x)/|h(x)|, &\text{ if } |h(x)|>j. 
\end{cases}
\end{equation*}
One can easily check that these $g_j$ satisfy the desired properties.  Not only that, we can give an approximation as \eqref{e:ggg} for $\BMO$ functions in the general framework. This will be detailedly discussed in Section \ref{s:homo}. 

Applying this, we have
\begin{equation*}
\int_{\R_+^n} g(x)f^N(x) dx=\lim_{j\to\infty}\int_{\R_+^n}g_j(x)f^N(x) dx, 
\end{equation*}
which indicates that 
\begin{align*}
\|f^N\|_{H_z^1(\R_+^n)}
\le \sup_{\substack{g\in C_c(\R_+^n) \\ \|g\|_{\BMO_e(\R_+^n)}\le1}}
\biggl|\int_{\R_+^n} g(x)f^N(x) dx\biggr|
=\sup_{\substack{g\in C_c^\infty(\R_+^n) \\ \|g\|_{\BMO_r(\R_+^n)}\le1}}
\biggl|\int_{\R_+^n} g(x)f^N(x) dx\biggr|.
\end{align*}
Now, for $g\in C_c^\infty(\R_+^n)$, we get 
\begin{equation*}
\biggl|\int_{\R_+^n} g(x)f^N(x) dx\biggr|
\le \biggl|\int_{\R_+^n} g(x)f(x) dx\biggr|+\epsilon ,
\end{equation*}
and hence
\begin{equation*}
\sup_{\substack{g\in C_c^\infty(\R_+^n) \\ |g\|_{\BMO_r(\R_+^n)}\le1}}
\biggl|\int_{\R_+^n} g(x)f^N(x) dx\biggr|
\le \sup_{\substack{g\in C_c^\infty(\R_+^n)\\ \|g\|_{\BMO_r(\R_+^n)}\le1}} 
\biggl|\int_{\R_+^n} g(x)f(x) dx\biggr|+\epsilon.
\end{equation*}
Accordingly, there holds 
\begin{equation*}
\|f\|_{H_z^1(\R_+^n)}
\le \sup_{\substack{g\in C_c^\infty(\R_+^n)\\ \|g\|_{\BMO_r(\R_+^n)}\le1}}
\biggl|\int_{\R_+^n} g(x)f(x) dx\biggr|+2\epsilon.
\end{equation*}
Since $\epsilon $ is arbitrary, we obtain
\begin{equation*}
\|f\|_{H_z^1(\R_+^n)}
\le \sup_{\substack{g\in C_c^\infty(\R_+^n) \\ \|g\|_{\BMO_r(\R_+^n)}\le1}}
\biggl|\int_{\R_+^n} g(x)f(x) dx\biggr|.
\end{equation*}
This shows \eqref{e:f-H1z}. 

The inequalities \eqref{e:sup-k} and \eqref{e:f-H1z} conclude that
\begin{equation*}
\sup_{k \geq 1} \|f_k\|_{H^1_z(\R^n_+)} \le c_0. 
\end{equation*}
Since $\|f_{k,z}\|_{H^1(\Rn)}=\|f_k\|_{H^1_z(\R^n_+)}$, the sequence $\{f_{k,z}\}$ is bounded in $H^1(\Rn)$. By Lemma 4.2 in \cite{CW}, there exists a subsequence $\{f_{k_j,z} \}_{j \in \N_+}$ and $f \in H^1(\Rn)$ such that 
\begin{align}\label{e:fkj}
\lim_{j \to \infty} \int_{\Rn} f_{k_j,z} \phi ~ dx
=\int_{\Rn} f \phi ~ dx, \ \  
\forall \phi \in C_c^{\infty}(\Rn). 
\end{align} 
Taking $\phi \in C_c^{\infty}(\Rn)$ with $\supp(\phi) \subset \R^n_-$ in \eqref{e:fkj}, we obtain 
\begin{align*}
\int_{\R^n_-} f \phi ~ dx=0. 
\end{align*} 
By a similar argument as \eqref{e:Lf}, we conclude that 
\begin{equation*}
f(x)=0\ \text{ a.e. } x \in \R^n_-.  
\end{equation*}
Write $f_+:=f|_{\R^n_+}$. It means that $f_{+,z}=f$ a.e. $\Rn$ and $f_+ \in H^1_z(\R^n_+)$. In view of \eqref{e:fkj}, we deduce that 
\begin{align*}
\mathscr{L}(g)=\lim_{j \to \infty} \int_{\R^n_+} f_{k_j} g dx
=\int_{\R^n_+} f g dx, \ \  
\forall g \in C_c^{\infty}(\R^n_+). 
\end{align*} 
Thus, the linear functional $\mathscr{L} \in (\VMO_r(\R^n_+))^*$ is represented by $f_+ \in H^1_z(\R^n_+)$. 
\end{proof}

\section{Compact commutators}\label{s:compact}

In this section, we will characterize $\VMO_{\Delta_N}(\Rn)$ via the compactness of commutators of Riesz transforms and the fractional integral operators associated with the Neumann Laplacian. 
\subsection{Compactness of $[b, R_{N}]$}
The Riesz transforms associated to the Neumann Laplacian are given by
\[
R_N=(R_{N,1},\ldots,R_{N,n}):=\nabla \Delta_N^{-1/2}.
\]
The kernel of $R_{N,j}$ was formulated in \cite{LW} as
\begin{align*}
R_{N,j}(x,y) = \big(R_j(x,y) + R_j(x,\widetilde{y})\big) H(x_n y_n), j=1,\ldots,n,
\end{align*}
where $R_{j}(x,y)$ is the kernel of Riesz transform $R_j$:
\begin{align*}
R_{j}(x,y) = \frac{x_j - y_j}{|x-y|^{n+1}}, j=1,\ldots,n,
\end{align*}

\begin{theorem}\label{t:bR-Lp}
Let $1<p<\infty$ and $j=1,\ldots,n$. Then $b \in \BMO_{\Delta_N}(\Rn)$ if and only if
$[b, R_{N,j}]$ is bounded on $L^p(\Rn)$. Moreover, we have 
\begin{align*}
\|[b, R_{N,j}]\|_{L^p(\Rn) \to L^p(\Rn)} \simeq \|b\|_{\BMO_{\Delta_N}(\Rn)}. 
\end{align*}
\end{theorem}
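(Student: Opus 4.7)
The plan is to reduce the full-space characterization to two independent half-space characterizations via the decoupling structure of the kernel $R_{N,j}(x,y) = (R_j(x,y) + R_j(x,\widetilde y)) H(x_n y_n)$. The Heaviside factor forces $R_{N,j}$ to preserve the sign of $x_n$, so that for any $f \in L^p(\Rn)$,
\begin{equation*}
(R_{N,j} f)|_{\R^n_+} = R_{N_+,j} f_+, \qquad (R_{N,j} f)|_{\R^n_-} = R_{N_-,j} f_-,
\end{equation*}
where $R_{N_\pm,j}$ denote the Riesz transforms associated with the Neumann Laplacian on the half-spaces. Consequently $[b, R_{N,j}] f = \mathbf{1}_{\R^n_+}[b_+, R_{N_+,j}] f_+ + \mathbf{1}_{\R^n_-}[b_-, R_{N_-,j}] f_-$, and hence
\begin{equation*}
\|[b, R_{N,j}]\|_{L^p(\Rn) \to L^p(\Rn)} \simeq \|[b_+, R_{N_+,j}]\|_{L^p(\R^n_+) \to L^p(\R^n_+)} + \|[b_-, R_{N_-,j}]\|_{L^p(\R^n_-) \to L^p(\R^n_-)}.
\end{equation*}
Combined with Proposition \ref{p:BMO}, this reduces the assertion to the half-space statement: $g \in \BMO_e(\R^n_+)$ if and only if $[g, R_{N_+,j}]$ is bounded on $L^p(\R^n_+)$, with equivalent norms, plus the symmetric statement on the lower half.

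The next step is to transport the half-space commutator back to a classical commutator on $\Rn$ via even extension. Using the reflection identity \eqref{e:t-1} at the semigroup level, together with the subordination formula that expresses $\Delta_{N_+}^{-1/2}$ in terms of $e^{-t\Delta_{N_+}}$, or directly via the change of variables $y \mapsto \widetilde y$ in the kernel $R_{N_+,j}(x,y) = R_j(x,y) + R_j(x,\widetilde y)$, one obtains for $g \in L^p(\R^n_+)$ and $x \in \R^n_+$,
\begin{equation*}
R_{N_+,j} g(x) = (R_j g_e)(x).
\end{equation*}
Applied to both $g$ and $(b_+ g)_e = b_{+,e} g_e$, this gives the pointwise identity $[b_+, R_{N_+,j}] g(x) = [b_{+,e}, R_j] g_e(x)$ for $x \in \R^n_+$. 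Exploiting the definite parity of $[b_{+,e}, R_j] g_e$ under $x \mapsto \widetilde x$ (even when $j<n$, odd when $j=n$) together with $\|g_e\|_{L^p(\Rn)} = 2^{1/p}\|g\|_{L^p(\R^n_+)}$, this translates into $\|[b_{+,e}, R_j] g_e\|_{L^p(\Rn)} = 2^{1/p} \|[b_+, R_{N_+,j}] g\|_{L^p(\R^n_+)}$.

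For the direction $\BMO_{\Delta_N}(\Rn) \Rightarrow$ bounded, $b_{+,e} \in \BMO(\Rn)$ and the classical Coifman--Rochberg--Weiss theorem yields the $L^p(\Rn)$-bound $\|[b_{+,e}, R_j] h\|_{L^p(\Rn)} \lesssim \|b_{+,e}\|_{\BMO(\Rn)} \|h\|_{L^p(\Rn)}$; specializing to $h = g_e$ and using the norm comparison gives the desired bound on $[b_+, R_{N_+,j}]$, and analogously on the lower half. For the converse (bounded $\Rightarrow$ BMO), the identity shows that $[b_{+,e}, R_j]$ is bounded on the even subspace $L^p_{\rm even}(\Rn)$. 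To upgrade to all of $L^p(\Rn)$, one splits $f = f_{\rm even} + f_{\rm odd}$; the even part is immediate, while for the odd part the restriction of $[b_{+,e}, R_j] f_{\rm odd}$ to $\R^n_+$ equals $[b_+, R_{D_+,j}] h$ with kernel $R_j(x,y) - R_j(x,\widetilde y)$, and the decomposition $R_{D_+,j} = R_{N_+,j} - 2M_j$, where $M_j$ has the non-singular kernel $R_j(x,\widetilde y)$, reduces matters to controlling $[b_+, M_j]$. Since $M_j g$ is simply $R_j$ applied to the odd-reflection extension of $g$ by zero, the bound on $[b_+, M_j]$ follows from the already-proven bound on $[b_{+,e}, R_j]$ acting on functions supported in $\R^n_-$. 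Once $[b_{+,e}, R_j]$ is bounded on the whole $L^p(\Rn)$, the converse half of Coifman--Rochberg--Weiss yields $b_{+,e} \in \BMO(\Rn)$. The main obstacle is the odd-part reduction: one must verify that $M_j$ can be absorbed without circularity, which ultimately rests on the precise symmetric form of the Neumann kernel and the evenness of $b_{+,e}$.
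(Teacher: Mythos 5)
Your decoupling to the half-spaces, the identity $[b_+,R_{N_+,j}]g=[b_{+,e},R_j]g_e\big|_{\R^n_+}$, and the parity argument reproduce exactly the reduction the paper performs, and the upper-bound direction (extend evenly, apply Coifman--Rochberg--Weiss on $\Rn$, restrict) is fine. The gap is in the converse, and it is precisely the circularity you flag at the end but do not resolve. The hypothesis only gives boundedness of $[b_{+,e},R_j]$ from $L^p_e(\Rn)$ to $L^p(\Rn)$. Your plan is to bootstrap this to all of $L^p(\Rn)$ and then cite the classical converse, but the odd subspace is not controlled by the even one. Concretely, set $h(y)=g(\widetilde y)\,\mathbf 1_{\R^n_-}(y)$, so that $[b_+,M_j]g=[b_{+,e},R_j]h\big|_{\R^n_+}$; one checks $h=\tfrac12 g_e-\tfrac12 g_o$ (with $g_o$ the odd extension of $g$), whence, in your notation $R_{D_+,j}=R_{N_+,j}-2M_j$,
\begin{align*}
[b_+,R_{D_+,j}]g
&=[b_+,R_{N_+,j}]g-2\,[b_+,M_j]g
\\
&=[b_+,R_{N_+,j}]g-[b_{+,e},R_j]g_e\big|_{\R^n_+}+[b_{+,e},R_j]g_o\big|_{\R^n_+}
\\
&=[b_+,R_{D_+,j}]g,
\end{align*}
a tautology: the even-subspace hypothesis places no constraint whatsoever on the odd-subspace action $[b_{+,e},R_j]g_o$. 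Put differently, a bound on $[b_+,R_{D_+,j}]$ would already give $b_+\in\BMO_r(\R^n_+)$ by the classical theory, which is the conclusion; so ``absorbing'' $M_j$ this way is genuinely circular, and there is no general principle that boundedness on the reflection-even subspace propagates to its odd complement.

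The paper avoids this entirely by proving the lower bound \emph{directly in the restricted setting}. Theorem~\ref{t:evenLp} asserts that for an even symbol $b$ one already has $\|b\|_{\BMO(\Rn)}\le A\,\|[b,R_i]\|_{L^p_e(\Rn)\to L^p(\Rn)}$, so no upgrade to full $L^p(\Rn)$ boundedness is needed. Its proof adapts the Uchiyama/Janson lower-bound machinery under the constraint that the test function must be even: the standard function $g$ associated with a cube $Q$ is symmetrized to $f=g+\widetilde g\in L^p_e(\Rn)$, and the sign analysis around \eqref{e:yz}, carried out separately for $1\le i\le n-1$ and for $i=n$, shows that on a conical region $G_i$ the contribution of $\widetilde g$ (supported in the reflected cube $\widetilde Q$) reinforces rather than cancels the main term; the argument then passes to the adjoint $S_i$ of $R_i$ viewed as a map $L^p_e\to L^p$. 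That restricted-testing lower bound is the ingredient your proposal lacks, and without something of its kind the reduction to $L^p_e$ does not close.
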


\begin{proof}
When $p=2$, the result was proved in \cite[Theorem 1.4]{LW}. But the proof was complicated because the authors used a weak  factorization of the space $H^1_{\Delta_N}(\Rn)$. Now we present a direct and easy proof for the lower bound and the upper bound can be obtained as the case $p=2$. 

Denote by $L^p_e(\Rn)$ the set of all functions $f \in L^p(\Rn)$ with $f(\widetilde{x})=f(x)$, $x \in \Rn$. Moreover, we endow the space $L^p_e(\Rn)$ with $L^p(\Rn)$ norm. Observing that 
\begin{align*}
R_j(\widetilde{x},y) &=R_j(x,y), \ \ 1 \leq j \leq n-1, \\ R_j(\widetilde{x},\widetilde{y}) &=-R_j(x,y), \ \ j=n, 
\end{align*}
one by changing variables has for any $f \in L^p_e(\Rn)$
\begin{align*}
[b_{+,e},R_j](f)(\widetilde{x})&=[b_{+,e},R_j](f)(x), \ \ 1 \leq j \leq n-1, \\ 
[b_{+,e},R_j](f)(\widetilde{x})&=-[b_{+,e},R_j](f)(x), \ \ j=n. 
\end{align*}
This indicates that 
\begin{align*}
\|[b_{+,e},R_j](f)\|_{L^p(\Rn)}^p
&=\int_{\R^n_+} |[b_{+,e},R_j](f)(x)|^p dx
+\int_{\R^n_-} |[b_{+,e},R_j](f)(x)|^p dx 
\\
&=\int_{\R^n_+} |[b_{+,e},R_j](f)(x)|^p dx
+\int_{\R^n_+} |[b_{+,e},R_j](f)(\widetilde{x})|^p dx 
\\
&=2\int_{\R^n_+} |[b_{+,e},R_j](f)(x)|^p dx. 
\end{align*}
Likewise, there holds 
\begin{align*}
\|[b_{-,e},R_j](f)\|_{L^p(\Rn)}^p
=2\int_{\R^n_-} |[b_{-,e},R_j](f)(x)|^p dx. 
\end{align*}
Additionally, we invoke \eqref{e:bRf} below to get 
\begin{align*}
\|[b,R_{N,j}](f)\|_{L^p(\Rn)}^p
&=\int_{\R^n_+} |[b_{+,e},R_j](f)(x)|^p dx + \int_{\R^n_-} |[b_{-,e},R_j](f)(x)|^p dx. 
\end{align*}
The above estimates conclude that 
\begin{equation*}
\|[b_{+,e},R_j]\|_{L_e^p(\Rn)\to L^p(\Rn)} + \|[b_{-,e},R_j]\|_{L_e^p(\Rn)\to L^p(\Rn)}
\lesssim \|[b,R_{N,j}]\|_{L^p(\Rn)\to L^p(\Rn)}. 
\end{equation*}
Consequently, the lower bound follows from this,  Theorem \ref{t:evenLp} below and Proposition \ref{p:BMO}. 
\end{proof}

\begin{theorem}\label{t:evenLp}
Let $1<p<\infty$ and $b\in \bigcup_{1<q<\infty}L_{\rm{loc}}^q(\Rn)$ with $b(x)=b(\tilde x)$, $x \in \Rn$. Then for the Riesz transform $R_i$ $(i=1,\dots,n)$ there exists a constant $A=A(n,p,R_i)$ such that
\begin{equation*}
\|b\|_{\BMO(\Rn)} \le A \|[b,R_i]\|_{L_e^p(\Rn)\to L^p(\Rn)}. 
\end{equation*}
\end{theorem}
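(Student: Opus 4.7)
My approach adapts the classical Janson--Uchiyama Fourier expansion characterization of $\BMO$ via commutator boundedness, carefully engineering the test functions to be even.

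First I would reduce the $\BMO(\Rn)$ norm to a supremum over cubes in $\R^n_+$. Since $b$ is even, any cube in $\R^n_-$ has the same mean oscillation as its reflection, and a cube $\tilde Q$ straddling the hyperplane $\{x_n=0\}$ can be enclosed in a slightly larger cube $\hat Q$ symmetric about the hyperplane; by evenness of $b$, $\fint_{\hat Q}|b-b_{\hat Q}|\,dx$ reduces to the oscillation of $b$ on the rectangle $\hat Q \cap \R^n_+$, which is dominated by the oscillation on an enclosing cube $Q^+ \subset \R^n_+$ of comparable size whose lower face lies on the hyperplane. Hence it suffices to prove $\fint_{Q^*}|b-b_{Q^*}|\,dx \lesssim M$ uniformly for $Q^* \subset \R^n_+$, where $M := \|[b,R_i]\|_{L_e^p(\Rn)\to L^p(\Rn)}$.

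For each target cube $Q^* \subset \R^n_+$ of side $r$, I would choose a source cube $Q$ so that the symmetrized kernel
\[
K^e(x,y) := R_i(x,y) + R_i(x,\widetilde{y})
\]
is non-degenerate on $Q \times Q^*$; for $i \leq n-1$ set $Q = Q^*+3re_i$, and for $i=n$ set $Q = Q^*+3re_n$, shifting $Q$ away from the hyperplane. Let $m$ be a median of $b$ on $Q^*$ (so $\int_{Q^*}\sgn(b-m)\,dy=0$), and define $f := \sgn(b-m)\chi_{Q^*}$ together with the even test function $F(x) := f(x) + f(\widetilde{x})$, which inherits evenness from $b$ and satisfies $\|F\|_{L^p(\Rn)} = 2^{1/p}|Q^*|^{1/p}$. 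A direct calculation exploiting evenness of $b$ then gives the key identity
\[
[b,R_i]F(x) = \int_{Q^*} K^e(x,y)\,(b(x)-b(y))\,\sgn(b(y)-m)\,dy, \qquad x \in \Rn.
\]

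I would then run the Fourier expansion: after rescaling by $r$, the function $1/K^e$ is smooth on a bounded region with bounds uniform in the geometry, and admits an absolutely convergent series
\[
\frac{1}{K^e(x,y)} = \sum_{k,\ell \in \Z^n} a_{k,\ell}\, e^{2\pi i(k\cdot x + \ell\cdot y)/(Ar)}
\]
on $Q\times Q^*$ with $\sum_{k,\ell}|a_{k,\ell}| \leq C_n r^n$. Substituting into the key identity and recognizing each inner integral as the commutator $[b,R_i]G_\ell(x)$, where $G_\ell$ is the even extension of $g_\ell(y) := e^{2\pi i\ell\cdot y/(Ar)}f(y)$, yields for $x \in Q$
\[
-\int_{Q^*}|b-m|\,dy \;=\; \sum_{k,\ell} a_{k,\ell}\, e^{2\pi ik\cdot x/(Ar)}\,[b,R_i]G_\ell(x),
\]
whose left side is constant in $x$. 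Taking $L^p(Q)$ norms on the right, using $\|G_\ell\|_{L^p}=2^{1/p}|Q^*|^{1/p}$ and the hypothesis, yields $\fint_{Q^*}|b-m|\,dy \leq C'M$. Combined with the standard estimate $\fint_{Q^*}|b-b_{Q^*}| \leq 2\fint_{Q^*}|b-m|$ and the first-step reduction, this gives $\|b\|_{\BMO(\Rn)}\lesssim M$.

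The main obstacle is verifying the uniform non-degeneracy and Fourier-coefficient estimates for $K^e$ across all cube configurations. This is immediate for $i \leq n-1$, where both summands of $K^e$ share the sign of $x_i - y_i$. For $i=n$ the naive choice of source cube \emph{below} $Q^*$ would cause the two summands of $K^e$ to cancel near the hyperplane---indeed $K^e$ vanishes identically on $\{x_n=0\}$. Placing $Q$ \emph{above} $Q^*$ instead keeps both $x_n-y_n$ and $x_n+y_n$ positive for $x\in Q$ and $y\in Q^*$, producing a coherent, uniformly bounded-below symmetrized kernel whose Fourier expansion has coefficients uniformly controlled across all heights $y^*_{0,n}/r \geq 1/2$.
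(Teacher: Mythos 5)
Your proposal is correct and takes a genuinely different route from the paper's proof.

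The paper follows Uchiyama's geometric/duality scheme: after normalizing, it constructs a test function $g$ with prescribed sign and cancellation properties, symmetrizes to $f=g+\widetilde g\in L^p_e$, obtains a pointwise lower bound for $R_i((b-b_Q)f)$ on a cone $G_i$, derives a measure estimate for the level set $F_i$ where $|b|$ is large, and then feeds $h=\sgn(b)\mathbf{1}_{F_i}$ into the adjoint $S_i$ of $R_i\colon L^p_e\to L^p$ to close the argument. Your approach is Janson's Fourier-series method adapted to the even setting: you replace the kernel $R_i$ by the symmetrized kernel $K^e(x,y)=R_i(x,y)+R_i(x,\widetilde y)$ that arises automatically when $R_i$ acts on even test functions, choose source and target cubes so that $K^e$ is uniformly nondegenerate, and expand $1/K^e$ in an absolutely convergent Fourier series to recover the oscillation directly from the operator norm, with no passage to the adjoint and no level-set counting. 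The paper's route is more elementary in tools but substantially more technical; yours is cleaner and more conceptual, and correctly identifies the two genuine new issues relative to the classical unweighted case (the reduction of arbitrary cubes to cubes in $\R^n_+$ using evenness of $b$, and the placement of the source cube so that the two summands of $K^e$ reinforce rather than cancel, in particular shifting upward rather than downward when $i=n$).

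Two points deserve a word of care, though neither is a gap. First, the median should be taken so that $\int_{Q^*}\sgn(b-m)\,dy=0$ exactly; if $b$ equals $m$ on a set of positive measure one should either adjust the sign on $\{b=m\}$ or, as the paper does, use $\sgn(b-c)-(\sgn(b-c))_{Q^*}$ for a fixed $c$, which has mean zero automatically and changes nothing downstream. Second, the uniformity of $\sum_{k,\ell}|a_{k,\ell}|\lesssim r^n$ over all admissible positions of $Q^*\subset\R^n_+$ should be stated as coming from the following observation: after rescaling by $r$, the kernel $r^nK^e$ is a smooth function of the rescaled variables and of the single parameter $t:=y^*_{0,n}/r\in[1/2,\infty)$, bounded below by a fixed constant, with all derivatives bounded uniformly on $[1/2,\infty)$ and converging in $C^\infty$ to the classical $R_i$ kernel as $t\to\infty$; hence the family $1/(r^nK^e)$ has uniformly bounded $C^k$ norms for every $k$, which gives the uniform Fourier-coefficient bound. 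Making this explicit turns the sketch into a complete proof.
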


\begin{proof}
We may assume $\|[b,R_i]\|_{L_e^p(\Rn)\to L^p(\Rn)}=1$. We shall show that there exists $A>0$ such that for any cube $Q$
\begin{equation*}
\MO(b,Q):=\frac{1}{|Q|} \int_Q |b(x)-b_Q|dx \le A.
\end{equation*}
Considering  $b(x)=b(\tilde x)$, $x\in\Rn$, we may assume the $n$-th component of the center of $Q$ is nonnegative. Since $\|[b(r\cdot),R_i]\|_{L_e^p(\Rn)\to L^p(\Rn)} =\|[b,R_i]\|_{L_e^p(\Rn)\to L^p(\Rn)}$ for any $r>0$, we may assume $\ell(Q)=1/{\sqrt n}$. Since $[b-b_Q,\, R_i]=[b,\, R_i]$, we may also assume $b_Q=0$. 

Now, take a function $g\in L_c^\infty(\Rn)$ such that 
\begin{align}
\label{e:bdd-g-1} & |g(y)| \le 2, \ \ y \in Q,
\\
\label{e:bdd-g-2} & \supp(g) \subseteq Q, 
\\
\label{e:bdd-g-3} & \int_{\Rn} g(x) dx =0, 
\\
\label{e:bdd-g-4} & g(y)b(y) \geq 0, \ \ y \in Q,
\\
\label{e:bdd-g-5} & \int_{\Rn} b(x)g(x) dx =\MO(b,Q)=:M.
\end{align} 
Set 
\begin{align*}
f=g+\widetilde{g} \ \ \text{ and } \ \ \widetilde{g}(x)=g(\widetilde{x}). 
\end{align*} 
Then $\widetilde{g}$ also satisfies \eqref{e:bdd-g-1}-\eqref{e:bdd-g-5} and $f \in L^p_e(\Rn)\cap L^\infty_e(\Rn)$.  

We here need an elementary observation. Let $1\le i\le n$, and $Q$ be a cube with center $x_0$. Suppose $\displaystyle \frac{y_i-(x_0)_i}{|y-x_0|}>2A_1$ with $0<A_1<1/2$, and $|y-x_0|>\sqrt n\,(2/A_1+1)\ell(Q)$. Then, it holds
\begin{equation}\label{e:yz}
\frac{y_i-z_i}{|y-z|}\ge A_1, \ \text{ for } z\in Q,
\end{equation}
and $y_i-z_i\ge 0$ for $z\in \widetilde{Q}$ with $y_n\ge 0$. Denote 
\begin{align*}
G_{i}:=\Big\{x\in \Rn: x_n\ge0,\, |x-x_0|>A_2:=\frac{2}{A_1}+1, \text{ and } 
\frac{x_i-(x_0)_i}{|x-x_0|}>2A_1\Big\}
\end{align*}
If $x\in G_{i}$, we have $(x_i-z_i)/|x-z|\ge A_1$ for $z\in Q$ and $x_i-z_i\ge 0$ for $z\in \widetilde{Q}$. Applying \eqref{e:bdd-g-2}, \eqref{e:bdd-g-4} and \eqref{e:bdd-g-5},  we have 
\begin{align*}
R_i(bf)(x)&=\int_\Rn \frac{x_i-z_i}{|x-z|^{n+1}}b(z)f(z)dz
\\
&=\int_Q \frac{x_i-z_i}{|x-z|^{n+1}}b(z)g(z)dz
+\int_{\widetilde{Q}} \frac{x_i-z_i}{|x-z|^{n+1}}b(z)\widetilde{g}(z)dz
\\
&\ge \frac{A_3}{|x-x_0|^n}\int_Q b(z)g(z)dz
=\frac{A_3}{|x-x_0|^n}M.
\end{align*}
Using \eqref{e:bdd-g-1}, \eqref{e:bdd-g-2} and \eqref{e:bdd-g-3}, we get for $x\in G_{i}$
\begin{equation*}
|b(x)R_i(f)(x)|\le A_4 \frac{|b(x)|}{|x-x_0|^{n+1}}.
\end{equation*}
Thus we have for $x\in G_{i}$
\begin{equation*}
|[b,R_i](f)(x)|\ge \frac{A_3}{|x-x_0|^n}M- \frac{A_4|b(x)|}{|x-x_0|^{n+1}}.
\end{equation*}
Write 
\begin{equation*}
F_i=\Bigl\{x\in G_i: |b(x)|>\frac{MA_3}{2A_4}|x-x_0|,\,|x-x_0|<M^{p'/n}
\Bigr\}.
\end{equation*}
Now we may assume $A_2^n<M^{p'}/2$, otherwise we have $M\le (2A_2)^{1/p}$ and nothing to prove. By the definition of $F_i$, we get $|F_i|<v_n M^{p'}$, where $v_n$ is the volume of the unit ball in $\Rn$. And hence we have $A_2^n+|F_i|/(2v_n) <M^{p'}$. Since 
$$
(G_i \setminus F_i) \cap \{x \in \Rn: |x|<M^{p'/n}\} 
\supset G_i \cap \{x \in \Rn: A_2<|x-x_0|<M^{p'/n}\}
$$
and the latter contains 
$G_i \cap \{x\in\Rn: (A_2^n+|F_i|/(2v_n))^{1/n}<|x-x_0|<M^{p'/n}\}$, 
we arrive at 
\begin{align*}
2\cdot2^p\ge\|f\|^p_{L^p(\Rn)}
&\ge \int_{\Rn}|[b,R_i]f(x)|^pdx
\\
&\ge \int_{(G_i \setminus F_i) \cap \{|x-x_0|<M^{p'/n}\}}
\bigg(\frac{MA_3}{2|x-x_0|^n}\bigg)^p dx
\\
&\ge \int_{G_i \cap \{A_5(|F_i|+A_2^n)^{1/n}<|x-x_0|<M^{p'/n}\}}
\bigg(\frac{MA_3}{2|x-x_0|^n}\bigg)^pdx
\\
&=\bigg(\frac{A_3M}{2}\bigg)^p|\Lambda| 
\int_{A_5(|F_i|+A_2^n)^{1/n}}^{M^{p'/n}} t^{-np+n-1}dt
\\
&=\bigg(\frac{A_3M}{2}\bigg)^p\frac{|\Lambda|}{n(1-p)}
\big(M^{p'(1-p)} -A_5^{n(1-p)}(|F_i|+A_2^{n})^{1-p}\big), 
\end{align*}
where $\Lambda:=\{x\in \mathbb{S}^{n-1}; {x_i-(x_0)_i}>2A_1\}$. 

From this we obtain 
\begin{equation*}
|F_i|\ge 2v_n(A_6M^{p'}-A_2^n)\ge v_nA_6M^{p'},
\end{equation*}
if $M>(2A_2^n/A_6)^{1/p'}$ ( otherwise nothing to prove). 

Now take $h(x)=\operatorname{sgn}(b(x))\,\mathbf{1}_{F_i}$. Denote by $S_i$ the adjoint operator of $R_i$ from $L_e^p(\Rn)$ to $L^p(\Rn)$. Then
\begin{align*}
S_i(x,y)&=\frac{1}{2}\bigl(R_i(x-y)+R_i(\widetilde x -y)\bigr),\ 1\le i\le n-1,
\\
S_n(x,y)&=\frac{1}{2}\bigl(R_n(x-y)-R_n(\widetilde x -y)\bigr).
\end{align*}
For $x\in Q$ we have
\begin{equation*}
|[b,S_i]h(x)|\ge \biggl|\int_{F_i}S_i(y,x)|b(y)|dy\biggr|
-|b(x)|\biggl|\int_{\Rn}S_i(y,x)h(y)dy\biggr|.
\end{equation*}
Since $y\in F_i$, we get 
\begin{equation*}
|b(y)|>\frac{MA_3}{2A_4}|y-x_0|. 
\end{equation*}
When $1\le i\le n-1$
\begin{equation*}
\int_{F_i}S_i(y,x)|b(y)|dy
=\frac{1}{2}\int_{F_i}R_i(y-x)|b(y)|dy 
+\frac{1}{2}\int_{\Rn}R_i(\widetilde y -x)|b(y)|dy.
\end{equation*}
For $x\in Q$ and $y\in F_i$, by \eqref{e:yz} we get 
$\displaystyle{\frac{y_i-x_i}{|y-x|}\ge A_1}$, and from $\widetilde y\in 
\widetilde {F_i}$ and $x\in Q$ we have $y_i-x_i\ge0$. 
Hence we obtain
\begin{align*}
\biggl|\int_{F_i}S_i(y,x)|b(y)|dy\biggr|
&\ge \frac{1}{2}\int_{F_i}\frac{A_1}{|y-x|^n}\frac{MA_3}{2A_4}|y-x_0|dy
\\
&\ge A_7M \int_{F_i}|y-x_0|^{1-n}dy \ge A_8M^{1+p'/n}.  
\end{align*}
On the other hand, from the definition of $F_i$ we see that $|y-x|\le |\widetilde y-x|$ for $y\in F_i$ and $x\in Q$. So, there exists 
$A_9$ such that 
\begin{align*}
\biggl|\int_{\Rn}S_i(y,x)h(y)dy\biggr|
&\le \frac12\int_{F_i}\frac{dy}{|y-x|^n}
+\frac12\int_{F_i}\frac{dy}{|\tilde y-x|^n}
\le A'_8\int_{F_i}\frac{dy}{|y-x_0|^n}
\\
&\le A'_8\int_{A_2<|y-x_0|<M^{p'/n}}\frac{dy}{|y-x_0|^n}
\le A_9 \log M. 
\end{align*}
Thus, we obtain 
\begin{equation}\label{eq:BMOComm-10}
|[b, S_i]h(x)|>A_8M^{1+p'/n}-A_9 \log M. 
\end{equation}
Since $[b, S_i]$ is the adjoint operator of $[b,R_i]$, we have
\begin{equation*}
\|[b, S_i]\|_{L^{p'}(\Rn)\to L_e^{p'}(\Rn)}\le 1.
\end{equation*}
It follows from the definition of $F_i$ and \eqref{eq:BMOComm-10} that 
\begin{align*}
A_{10}M&\ge \|h\|_{L^{p'}(\Rn)}
\ge \|[b, S_i]h\|_{L_e^{p'}(\Rn)}
\ge \int_{Q}|[b, S_i]h(x)|dx
\\
&\ge \int_{Q}\bigl(A_8M^{1+p'/n}-A_9\log M\bigr)dx
=|Q|\bigl(A_8M^{1+p'/n}-A_9\log M\bigr).
\end{align*}
Then we have $M\le A(n,p,R_i)$. 

In the case $i=n$
\begin{equation*}
\int_{F_i}S_n(y,x)|b(y)|dy =\frac{1}{2}\int_{F_i}R_n(y-x)|b(y)|dy 
-\frac{1}{2}\int_{\Rn}R_n(\widetilde y -x)|b(y)|dy.
\end{equation*}
For $x\in Q$ and $y\in F_n$, we by \eqref{e:yz} get $\displaystyle{\frac{y_n-x_n}{|y-x|}\ge A_1}$, and from $\widetilde y\in 
\widetilde{F}_n$ and $x\in Q$ we have $y_n-x_n\le0$. Hence we obtain
\begin{align*}
\biggl|\int_{F_n}S_n(y,x)|b(y)|dy\biggr|
&\ge \frac{1}{2} \int_{F_n} \frac{A_1}{|y-x|^n} 
\frac{MA_3}{2A_4}|y-x_0|dy
\\
&\ge A_7M\int_{F_n}|y-x_0|^{1-n}dy\ge A_8M^{1+p'/n}.  
\end{align*}
Proceeding the proof in the same way as in the case $1\le i\le n-1$ hereafter, we get the desired estimate. This completes the proof of Theorem \ref{t:evenLp}. 
\end{proof}

\begin{theorem}\label{t:compact-1}
Let $1<p<\infty$ and $j=1,\ldots,n$. Then $b \in \VMO_{\Delta_N}(\Rn)$ if and only if $[b, R_{N,j}]$ is a compact operator on $L^p(\Rn)$.
\end{theorem}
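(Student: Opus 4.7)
The strategy is to bootstrap the classical Uchiyama equivalence \eqref{e:bK} via the decomposition already used in the proof of Theorem~\ref{t:bR-Lp},
\[
[b,R_{N,j}]f(x)=\begin{cases}[b_{+,e},R_j]\bigl((f|_{\R^n_+})_e\bigr)(x),& x\in\R^n_+,\\ [b_{-,e},R_j]\bigl((f|_{\R^n_-})_e\bigr)(x),& x\in\R^n_-,\end{cases}
\]
combined with the identification $b\in\VMO_{\Delta_N}(\Rn)\iff b_{+,e},b_{-,e}\in\VMO(\Rn)$ from Theorem~\ref{t:VMO}, the density $\VMO_{\Delta_N}(\Rn)=\overline{C_c^\infty(\Rn)}^{\BMO_{\Delta_N}(\Rn)}$ from Theorem~\ref{t:CCC}, and the operator-norm equivalence $\|[\cdot,R_{N,j}]\|_{L^p\to L^p}\simeq\|\cdot\|_{\BMO_{\Delta_N}}$ from Theorem~\ref{t:bR-Lp}.

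For the sufficiency direction, assuming $b\in\VMO_{\Delta_N}(\Rn)$, I would pick $b_k\in C_c^\infty(\Rn)$ with $\|b-b_k\|_{\BMO_{\Delta_N}}\to 0$; the norm equivalence gives $[b_k,R_{N,j}]\to[b,R_{N,j}]$ in operator norm, so it suffices to prove each $[b_k,R_{N,j}]$ compact on $L^p(\Rn)$. For $b_k\in C_c^\infty(\Rn)$ the reflections $b_{k,\pm,e}$ are continuous with compact support and hence belong to $\VMO(\Rn)$, which by \eqref{e:bK} makes $[b_{k,\pm,e},R_j]$ compact on $L^p(\Rn)$. Rewriting the displayed decomposition as
\[
[b_k,R_{N,j}]=\mathbf{1}_{\R^n_+}\,[b_{k,+,e},R_j]\,P_++\mathbf{1}_{\R^n_-}\,[b_{k,-,e},R_j]\,P_-,
\]
where the operator $P_\pm f:=(f|_{\R^n_\pm})_e$ is bounded on $L^p(\Rn)$, exhibits $[b_k,R_{N,j}]$ as a sum of compositions of bounded operators with compact ones; compactness follows, and persists under operator-norm limits.

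For the converse, Theorem~\ref{t:bR-Lp} already provides $b\in\BMO_{\Delta_N}(\Rn)$, hence $b_{\pm,e}\in\BMO(\Rn)$ by Proposition~\ref{p:BMO}. Feeding $f$ supported in $\R^n_+$ (extended by zero) into $[b,R_{N,j}]$, the decomposition collapses to $\mathbf{1}_{\R^n_+}\cdot[b_{+,e},R_j]f_e$, with $f_e$ the even extension of $f|_{\R^n_+}$. Because $b_{+,e}$ is even in the last variable, $[b_{+,e},R_j]f_e$ has a definite parity (even when $j<n$, odd when $j=n$), so its full $L^p(\Rn)$-norm equals $2^{1/p}$ times its $L^p(\R^n_+)$-norm. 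Consequently compactness of $[b,R_{N,j}]$ on $L^p(\Rn)$ transfers to compactness of $[b_{+,e},R_j]\colon L^p_e(\Rn)\to L^p(\Rn)$, and symmetrically for $b_{-,e}$.

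The residual and hardest step is an even-symmetric variant of Uchiyama's converse: if $b\in\BMO(\Rn)$ is symmetric in the last variable and $[b,R_j]\colon L^p_e(\Rn)\to L^p(\Rn)$ is compact, then $b\in\VMO(\Rn)$. I would prove this by contradiction, paralleling Theorem~\ref{t:evenLp}. If $b_{+,e}\notin\VMO(\Rn)$, Proposition~\ref{p:VMO} supplies cubes $\{Q_k\}$ with $\MO(b_{+,e},Q_k)\ge\delta>0$ obeying one of $\ell(Q_k)\to 0$, $\ell(Q_k)\to\infty$, or $\mathrm{dist}(Q_k,0)\to\infty$; after reflecting we may take $Q_k\subset\overline{\R^n_+}$. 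The polarized atoms $g_k$ from \eqref{e:bdd-g-1}--\eqref{e:bdd-g-5} are then symmetrized to $f_k:=g_k+\widetilde g_k\in L^p_e(\Rn)$, and the annular computation driving Theorem~\ref{t:evenLp} shows that $[b_{+,e},R_j]f_k$ maintains an $L^p$-mass of order $\delta$ concentrated on a region that accompanies $Q_k$. The main technical burden will be verifying, in each of the three escape regimes, that these images admit no $L^p$-convergent subsequence; once completed this contradicts the previously established compactness, forcing $b_{+,e}\in\VMO(\Rn)$, and the symmetric argument yields $b_{-,e}\in\VMO(\Rn)$. Theorem~\ref{t:VMO} then concludes $b\in\VMO_{\Delta_N}(\Rn)$.
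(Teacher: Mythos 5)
Your proposal is correct and essentially mirrors the paper's proof: both directions hinge on the decomposition $[b,R_{N,j}]f=[b_{+,e},R_j](f_{+,e})\mathbf{1}_{\R^n_+}+[b_{-,e},R_j](f_{-,e})\mathbf{1}_{\R^n_-}$, the restriction to the even subspace $L^p_e(\Rn)$ together with the parity identity $\|[b_{+,e},R_j]f\|_{L^p(\Rn)}^p=2\int_{\R^n_+}|[b_{+,e},R_j]f|^p\,dx$ for $f\in L^p_e$, and an Uchiyama-type contradiction with symmetrized atoms $f_k=g_k+\widetilde g_k$ for the converse. The only cosmetic difference is in the sufficiency direction, where the paper applies Uchiyama's compactness theorem directly to $b_{\pm,e}\in\VMO(\Rn)$ (supplied by Theorem~\ref{t:VMO}) rather than routing through the $C_c^\infty$-density of Theorem~\ref{t:CCC} and operator-norm limits; both routes are valid and of comparable length.
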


\begin{proof}
We first show that $[b, R_{N,j}]$ is a compact operator on $L^p(\Rn)$ if $b \in \VMO_{\Delta_N}(\Rn)$. Let $\{f_k\}_{k=1}^{\infty}$ be an arbitrary uniformly bounded sequence in $L^p(\Rn)$. In order to prove the compactness of $[b, R_{N,j}]$, it only needs to select a subsequence $\{f_{k_l}\}_{l=1}^{\infty} \subseteq \{f_k\}_{k=1}^{\infty}$ such that $\{[b, R_{N,j}](f_{k_l})\}_{l=1}^{\infty}$ converges in $L^p(\Rn)$.

By Theorem \ref{t:VMO}, we have $b_{+,e} \in \VMO(\Rn)$ and $b_{-,e} \in \VMO(\Rn)$ provided by $b \in \VMO_{\Delta_N}(\Rn)$. Then it follows from \cite[Theorem 2]{U} that both $[b_{+,e},R_j]$ and $[b_{-,e},R_j]$ are compact operators on $L^p(\Rn)$. Note that $\{f_k\}_{k=1}^{\infty}$ is uniformly bounded in $L^p(\Rn)$, and
\[
\|f_{k,+,e}\|_{L^p(\Rn)}+\|f_{k,-,e}\|_{L^p(\Rn)}
\lesssim \|f_k\|_{L^p(\Rn)}, \ \ \forall k \geq 1.
\]
This gives that $\{f_{k,+,e}\}_{k=1}^{\infty}$ and $\{f_{k,-,e}\}_{k=1}^{\infty}$ are uniformly bounded in $L^p(\Rn)$. In view of the compactness of $[b_{+,e},R_j]$, there exists a sequence $\{k_m\}_{m=1}^{\infty}$ such that
\begin{align}\label{e:km}
\{[b_{+,e},R_j](f_{k_m,+,e})\}_{m=1}^{\infty}
\ \text{ converges in } L^p(\Rn).
\end{align}
Observer that $\{f_{k_m,-,e}\}_{m=1}^{\infty}$ is uniformly bounded in $L^p(\Rn)$. Together with the compactness of $[b_{-,e},R_j]$, there exists a subsequence $\{k_{m_l}\}_{l=1}^{\infty} \subseteq \{k_m\}_{m=1}^{\infty}$ such that
\begin{align}\label{e:kml-}
\big\{[b_{-,e},R_j](f_{k_{m_l},-,e}) \big\}_{l=1}^{\infty}\quad\text{converges in } L^p(\Rn).
\end{align}
The equation \eqref{e:km} implies that
\begin{align}\label{e:kml+}
\left\{[b_{+,e},R_j](f_{k_{m_l},+,e}) \right\}_{l=1}^{\infty}
\ \text{ converges in } L^p(\Rn).
\end{align}

For any $x \in \Rn$ and function $f$ on $\Rn$, we have
\begin{align*}
R_{N,j}(f\mathbf{1}_{\R^n_+})(x)
&=\int_{\R^n_+} (R_j(x,y)+R_j(x,\widetilde{y})) H(x_n y_n) f(y) dy
\\ 
&=\mathbf{1}_{\R^n_+}(x) \int_{\R^n_+} R_j(x,y) f(y) dy
+ \mathbf{1}_{\R^n_+}(x) \int_{\R^n_-} R_j(x,y) f(\widetilde{y}) dy
\\ 
&=\mathbf{1}_{\R^n_+}(x) \int_{\Rn} R_j(x,y) f_{+,e}(y) dy
=\mathbf{1}_{\R^n_+}(x) R_j(f_{+,e})(x),
\end{align*}
which gives that
\begin{equation}\label{e:bRf}
\begin{split}
[b,R_{N,j}](f\mathbf{1}_{\R^n_+})
&= b R_{N,j}(f\mathbf{1}_{\R^n_+}) - R_{N,j}(bf\mathbf{1}_{\R^n_+}) \\
&= [b_{+,e} R_{j}(f_{+,e}) - R_{j}(b_{+,e}f_{+,e})]\mathbf{1}_{\R^n_+} 
=[b_{+,e},R_j](f_{+,e}) \mathbf{1}_{\R^n_+}.
\end{split}
\end{equation}
Similarly, it holds
\begin{equation*}
[b,R_{N,j}](f\mathbf{1}_{\R^n_-}) = [b_{-,e},R_j](f_{-,e})\mathbf{1}_{\R^n_-}. 
\end{equation*}
As a consequence, we deduce that
\begin{align*}
[b,R_{N,j}](f_{k_{m_l}}) & = [b,R_{N,j}](f_{k_{m_l}}\mathbf{1}_{\R^n_+})
+ [b,R_{N,j}](f_{k_{m_l}}\mathbf{1}_{\R^n_-})
\\ 
&=[b_{+,e},R_j](f_{k_{m_l},+,e})\mathbf{1}_{\R^n_+}
+ [b_{-,e},R_j](f_{k_{m_l},-,e})\mathbf{1}_{\R^n_-}.
\end{align*}
From \eqref{e:kml+} and \eqref{e:kml-}, it immediately implies that
\begin{equation*}
\left\{[b,R_{N,j}](f_{k_{m_l}})\right\}_{l=1}^{\infty} \ \text{ converges in } L^p(\Rn).
\end{equation*}
This shows that $[b, R_{N,j}]$ is compact on $L^p(\Rn)$.

Let us turn our attention to the sufficiency. Let $[b, R_{N,i}]$ be a compact operator on $L^p(\Rn)$, $(1\le i\le n)$. Then $[b, R_{N,i}]$ is bounded on $L^p(\Rn)$, which by Theorem \ref{t:bR-Lp}  is equivalent to $b_{+,e} \in \BMO(\Rn)$ and $b_{-,e} \in \BMO(\Rn)$. In view of Theorem \ref{t:VMO}, it is enough to prove that $b_{+,e} \in \VMO(\Rn)$ and $b_{-,e} \in \VMO(\Rn)$. Here we will only focus on the first one.

Recall that $L^p_e(\Rn)$ is a closed subspace of $L^p(\Rn)$.  Then $[b, R_{N,i}]$ is a compact operator from $L^p_e(\Rn)$ to 
$L^p(\Rn)$.  Applying the same argument as that in the preceding paragraph, we can show that $[b_{+,e}, R_{i}]$ is also a compact operators from $L^p_e(\Rn)$ to $L^p(\Rn)$.

We assume that $\|b_{+,e}\|_{\BMO(\Rn)}=1$. In order to show $b_{+,e} \in \VMO(\Rn)$, we use a contradiction argument
via Proposition \ref{p:VMO}. Suppose that $\gamma_1(b_{+,e})>0$. In the sequel, we simply write $\b$ for $b_{+,e}$.
Then $\b(\widetilde{x})=\b(x)$, $x \in \Rn$. By $\gamma_1(\b)>0$, there exists $\delta>0$ and a sequence of cubes $\{Q_j\}_{j=1}^{\infty}$ such that $\lim\limits_{j \to \infty} \ell(Q_j)=0$ and $\MO(\b,Q_j) > \delta$, $j \in \N_+$. The fact $\b(\widetilde{x})=\b(x)$ implies that
\begin{align*}
\b_{Q_j}=\b_{\widetilde{Q}_j}
\ \text{ and } \ \MO(\b,Q_j)=\MO(\b,\widetilde{Q}_j), \ \ j \in \N_+.
\end{align*}
Thus, we may assume that the $n$-th component of $x_j$ is nonnegative, $j \in \N_+$, where $x_j$ is the center of $Q_j$.

Take $0<A_1<1/2$ and denote
\begin{align*}
\Lambda &:=\left\{x' \in \S^{n-1}: \Omega(x)>2A_1 \right\},
\\ 
G_{\ell} &:= \left\{x \in \Rn: |x|>\sqrt{n} \ell A_2 \
\text{ and } \ x' \in \Lambda \right\},
\end{align*}
where $\Omega(x)=\frac{x_i}{|x|}$ and  $A_2=1+\frac{2}{A_1}$. For $j\in\N$, set
\begin{equation*}
g_j(x)=|Q_j|^{-\frac1p} \Bigl(\sgn(\b(x)-\b_j)-(\sgn(\b-\b_j))_{Q_j}\Bigr)
\mathbf{1}_{Q_j}(x),
\end{equation*}
where $\b_j=\b_{Q_j}$. Then we can easily check that $g_j$ satisfies the following: 
\begin{align}
\label{e:g-1} & g_j(x)(\b(x)-\b_j) \geq 0,\ x\in \Rn, \\
\label{e:g-2} & \supp(g_j) \subseteq Q_j, \\
\label{e:g-3} & \int_{\Rn} g_j(x) dx =0, \\
\label{e:g-4} & |g_j(x)| \le 2|Q_j|^{-1/p}, \ \ x \in Q_j.\\
\label{e:g-5} & \int_{Q_j} (\b(x)-\b_j)g_j(x) dx =|Q_j|^{1-\frac1p} \MO(\b, Q_j).
\end{align}
Set
\begin{align*}
f_j=g_j+\widetilde{g}_j, \ \ \widetilde{g}_j(x)=g_j(\widetilde{x}).
\end{align*}
Then $\widetilde{g}_j$ also satisfies \eqref{e:g-1}-\eqref{e:g-4} and $f_j \in L^p_e(\Rn)$. It follows from \eqref{e:g-2}-\eqref{e:g-4} that
\begin{align*}
|R_i((\b-\b_j)f_j)(y)|
&\leq |R_i((\b-\b_j)g_j)(y)|
+ |R_i((\b-\b_j)\widetilde{g}_j)(y)|
\\ 
&\leq A_{20} \frac{|Q_j|^{1-\frac1p}}{|y-x_j|^n}, \ \ \
y \not\in A_{21} Q_j.
\end{align*}
Let $y-x_j \in G_{\ell(Q_j)}$ and $z \in Q_j$. Then we have $(y_i-z_i)/|y-z| \geq A_1$. Indeed, together with $|y-x_j| \geq \sqrt{n} A_2 \ell(Q_j)/2$, the fact $|z-x_j| \leq \sqrt{n} \ell(Q_j)/2$ implies that $|y-x_j| \geq 2A_2 |z-x_j|$ and $|y-z| \geq |y-x_j|-|z-x_j| \geq (2A_2-1)|z-x_j|$. Then we see that
\begin{align*}
1-\frac{1}{2A_2} \leq \frac{|y-z|}{|y-x_j|} \leq 1+\frac{1}{2A_2}
\end{align*}
provided by $|y-x_j|-|z-x_j| \leq |y-z| \leq |y-x_j|+|z-x_j|$. This immediately gives that
\begin{align*}
\frac{y_i-z_i}{|y-z|} &= \frac{y_i-(x_j)_i}{|y-x_j|} \frac{|y-x_j|}{|y-z|} - \frac{z_i-(x_j)_i}{|y-z|}
\\ 
&\geq 2A_1 \cdot \frac{1}{1+\frac{1}{2A_2}} - \frac{1}{2A_2-1}
\\ 
&= 2A_1 - 2A_1 \cdot \frac{\frac{1}{2A_2}}{1+\frac{1}{2A_2}} - \frac{1}{2A_2-1}
\\ 
&= 2A_1 - \frac{2A_1^2}{3A_1+4} - \frac{A_1}{A_1+4}
\\ 
&\geq 2A_1 - \frac{2}{3}A_1 - \frac14 A_1 = \frac{13}{12} A_1.
\end{align*}
As a consequence, we deduce by \eqref{e:g-5} that
\begin{align*}
R_i((\b-\b_j)g_j)(y) &= \int_{Q_j} \frac{y_i-z_i}{|y-z|^{n+1}} (\b(z)-\b_j) g_j(z) dz
\\ 
&\geq \frac{cA_1}{|y-x_j|^n} \int_{Q_j} (\b(z)-\b_j) g_j(z) dz
\\ 
&=cA_1 |Q_j| \MO(\b,Q_j) \frac{|Q_j|^{-\frac1p}}{|y-x_j|^n}
>c \delta A_1 \frac{|Q_j|^{1-\frac1p}}{|y-x_j|^n}.
\end{align*}
Additionally, we have
\begin{align*}
R_i((\b-\b_j)\widetilde{g}_j)(y)
&= \int_{\widetilde{Q}_j} \frac{y_i-z_i}{|y-z|^{n+1}} (\b(z)-\b_j)\widetilde{g}_j(z) dz
\\
&=\int_{{Q}_j} \frac{y_i-(\widetilde z)_i}{|y-\widetilde z|^{n+1}} (\b(z)-\b_j) {g}_j(z) dz.
\end{align*}
When $1\le i\le n-1$, we have $(\widetilde z)_i=z_i$, and so $y_i-(\widetilde z)_i=y_i-z_i>0$ for $z\in Q_j$ because of $(y_i-z_i)/|y-z|\ge A_1$. When $i=n$, we get $(\widetilde z)_i=-z_n$, and hence $y_i-(\widetilde z)_i=y_n+z_n$. We easily observe that $y_n+z_n>0$ if $y_n>0$ and $z\in Q_j$.

Thus, it holds for $y \in x_j+G_{\ell(Q_j)}$ with $y_n>0$ that
\begin{align*}
R_i((\b-\b_j)f_j)(y) >c \delta A_1 \frac{|Q_j|^{1-\frac1p}}{|y-x_j|^n}.
\end{align*}
Let
\[
R^+(x,\alpha,\beta)=\{y \in \R^2: y_2 \geq 0, \alpha<|x-y|<\beta\}.
\]
Replacing $R(x,\alpha \ell(Q_j),\beta \ell(Q_j))$ by $R^+(x,\alpha \ell(Q_j),\beta \ell(Q_j))$ in \cite[p. 169]{U}, we obtain
\begin{align*}
&\|[\b,R_i]f_{j(h)} - [\b,R_i]f_{j(h+m)}\|_{L^p(\Rn)}^p
\\
&\geq \int_{R^+(x_{j(h)},A_{29}\ell(Q_{j(h)}), A_{30} \ell(Q_{j(h)}))
\setminus
R^+(x_{j(h+m)},0, A_{30} \ell(Q_{j(h+m)}))} \cdots dx
\\
&\geq \bigg(\Big(\frac12\Big)^{1/p} - \Big(\frac14\Big)^{1/p} \bigg)^p A_{31}.
\end{align*}
This indicates that $[\b, R_i]$ is not a compact operator from $L^p_e(\Rn)$ to $L^p(\Rn)$.

Similar consideration holds for the cases $\gamma_2(\b)>0$ and $\gamma_3(\b)>0$.
\end{proof}

\subsection{Compactness of $[b, \Delta_{N}^{-\alpha/2}]$} 
For $0<\alpha<n$, the fractional operator $\Delta_{N}^{-\alpha/2}$ of $\Delta_{N}$ is defined by
\begin{align*}
\Delta_{N}^{-\alpha/2} f(x)
:= \frac{1}{\Gamma(\alpha/2)} \int_{0}^{\infty} e^{- t \Delta_N}f(x) \frac{dt}{t^{1-\alpha/2}}.
\end{align*}
If we by $K_{N,\alpha}(x,y)$ denote the kernel of $\Delta_{N}^{-\alpha/2}$, then it follows from \cite[Proposition 2.4]{LW} that
\begin{align*}
K_{N,\alpha}(x,y) := \bigg(\frac{C_{n,\alpha}}{|x-y|^{n - \alpha}}
+ \frac{C_{n,\alpha}}{(|x'-y'|^2 + |x_n + y_n|^2)^{\frac{n}{2} - \frac{\alpha}{2}}} \bigg) H(x_n y_n),
\end{align*}
where $C_{n,\alpha} = \frac{1}{2^{\alpha} \pi^{n/2}} \frac{\Gamma((n-\alpha)/2)}{\Gamma(\alpha/2)}$.

\begin{theorem}\label{t:compact-2}
Let $0 < \alpha < n$, $ 1 < p < q < \infty $ with $\frac{1}{q} = \frac{1}{p} - \frac{\alpha}{n}$. Then $b \in \VMO_{\Delta_N}(\Rn)$ if and only if $[b, \Delta_{N}^{-\alpha/2}]$ is a compact operator from $L^p(\Rn)$ to $L^q(\Rn)$.
\end{theorem}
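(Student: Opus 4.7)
The plan is to parallel the proof of Theorem~\ref{t:compact-1}, replacing Riesz transforms by the classical Riesz potential
$I_\alpha f(x)=C_{n,\alpha}\int_{\Rn}|x-y|^{-(n-\alpha)}f(y)\,dy$
and reducing every question about $[b,\Delta_N^{-\alpha/2}]$ to one about $[b_{\pm,e},I_\alpha]$. The key fact is the factorization
\begin{align*}
[b,\Delta_N^{-\alpha/2}](f\mathbf{1}_{\R^n_+})&=[b_{+,e},I_\alpha](f_{+,e})\,\mathbf{1}_{\R^n_+},\\
[b,\Delta_N^{-\alpha/2}](f\mathbf{1}_{\R^n_-})&=[b_{-,e},I_\alpha](f_{-,e})\,\mathbf{1}_{\R^n_-},
\end{align*}
which is proved exactly as \eqref{e:bRf}: one uses $K_{N,\alpha}(x,y)=0$ whenever $x_ny_n<0$, and the change of variables $y\mapsto\widetilde y$ converts the second summand of $K_{N,\alpha}$ into the ``missing half'' of $\Rn$ needed to rebuild $I_\alpha(f_{\pm,e})$ from an integral over a single half-space.

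For $b\in\VMO_{\Delta_N}(\Rn)\Rightarrow[b,\Delta_N^{-\alpha/2}]\in\mathcal{K}(L^p,L^q)$, Theorem~\ref{t:VMO} gives $b_{+,e},b_{-,e}\in\VMO(\Rn)$. By the classical characterization of compact commutators of fractional integrals (Uchiyama's scheme adapted to $I_\alpha$), both $[b_{+,e},I_\alpha]$ and $[b_{-,e},I_\alpha]$ are compact from $L^p(\Rn)$ to $L^q(\Rn)$. Given a bounded sequence $\{f_k\}\subset L^p(\Rn)$, both $\{f_{k,+,e}\}$ and $\{f_{k,-,e}\}$ are bounded in $L^p(\Rn)$, so a double diagonal extraction yields a subsequence $\{f_{k_j}\}$ along which $\{[b_{+,e},I_\alpha]f_{k_j,+,e}\}$ and $\{[b_{-,e},I_\alpha]f_{k_j,-,e}\}$ both converge in $L^q(\Rn)$. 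The factorization then produces $L^q$-convergence of $\{[b,\Delta_N^{-\alpha/2}]f_{k_j}\}$, in the same manner as the first half of the proof of Theorem~\ref{t:compact-1}.

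For the converse, assume $[b,\Delta_N^{-\alpha/2}]$ is compact from $L^p(\Rn)$ to $L^q(\Rn)$. Restricting to the closed subspace $L^p_e(\Rn)$ and using that $I_\alpha$ commutes with $x\mapsto\widetilde x$, so that $[b_{\pm,e},I_\alpha]f$ is even whenever $f\in L^p_e(\Rn)$, one obtains the identity
\begin{align*}
2\|[b,\Delta_N^{-\alpha/2}]f\|_{L^q(\Rn)}^{q}=\|[b_{+,e},I_\alpha]f\|_{L^q(\Rn)}^{q}+\|[b_{-,e},I_\alpha]f\|_{L^q(\Rn)}^{q},\qquad f\in L^p_e(\Rn).
\end{align*}
Hence $[b_{+,e},I_\alpha]$ and $[b_{-,e},I_\alpha]$ are both compact from $L^p_e(\Rn)$ to $L^q(\Rn)$, and Theorem~\ref{t:VMO} reduces the problem to the even-function Uchiyama-type statement: if $b=b(\widetilde x)$ and $[b,I_\alpha]\colon L^p_e(\Rn)\to L^q(\Rn)$ is compact, then $b\in\VMO(\Rn)$. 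This is the principal obstacle, and it is proved along the template of Theorem~\ref{t:evenLp}. If $b\notin\VMO(\Rn)$, Proposition~\ref{p:VMO} yields $\delta>0$ and cubes $\{Q_j\}$ (with centers in $\overline{\R^n_+}$, by evenness of $b$) with $\MO(b,Q_j)>\delta$; one symmetrizes standard mean-zero test functions $g_j$ supported on $Q_j$ into even inputs $f_j=g_j+\widetilde g_j\in L^p_e(\Rn)$. Because $|x-y|^{-(n-\alpha)}$ is a positive rotationally symmetric kernel with no Riesz-type cancellation, the cone construction simplifies considerably: for $x$ in a far-field half-cone $x_{Q_j}+G_{\ell(Q_j)}\subset\R^n_+$ (where $|x-z|\sim|x-x_{Q_j}|$ uniformly for $z\in Q_j\cup\widetilde Q_j$) one gets the pointwise bound $|[b,I_\alpha]f_j(x)|\gtrsim\delta\,|Q_j|^{1-1/p}/|x-x_{Q_j}|^{n-\alpha}$. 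Choosing the $Q_j$ so that half-annular regions $R^+(x_{Q_j},A\ell(Q_j),B\ell(Q_j))$ (with $y_n\ge0$) are pairwise far apart in each of the three regimes $\gamma_1,\gamma_2,\gamma_3$, exactly as in the final step of the proof of Theorem~\ref{t:compact-1}, yields $\|[b,I_\alpha]f_{j(h)}-[b,I_\alpha]f_{j(h+m)}\|_{L^q(\Rn)}\ge c>0$, contradicting compactness. The accompanying boundedness bound $\MO(b,Q)\lesssim\|[b,I_\alpha]\|_{L^p_e\to L^q}$, used to conclude $b\in\BMO(\Rn)$ once compactness is assumed, is obtained by the same symmetrized test-function construction together with the self-duality $I_\alpha^\ast=I_\alpha$ on $L^{q'}\to L^{p'}$ and the $\sgn(b)$-test exactly as in the end of the proof of Theorem~\ref{t:evenLp}.
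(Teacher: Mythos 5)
Your proposal is correct and follows essentially the same reduction as the paper: use the factorization $[b,\Delta_N^{-\alpha/2}](f\mathbf{1}_{\R^n_\pm})=[b_{\pm,e},I_\alpha](f_{\pm,e})\mathbf{1}_{\R^n_\pm}$ (the exact analogue of \eqref{e:bRf}, which holds because $K_{N,\alpha}(x,y)=C_{n,\alpha}(|x-y|^{\alpha-n}+|x-\widetilde y|^{\alpha-n})H(x_ny_n)$), pass through Theorem \ref{t:VMO}, and run the Uchiyama-type argument on $L^p_e(\Rn)$ exactly as in Theorem \ref{t:compact-1}. The scale invariance works out: $q(1-\tfrac1p)n+n-q(n-\alpha)=0$ since $\tfrac1q=\tfrac1p-\tfrac\alpha n$, so the annular lower bound on $\|[b,I_\alpha]f_{j(h)}-[b,I_\alpha]f_{j(h+m)}\|_{L^q}$ is scale-free, and the positivity of the kernel $|x-y|^{\alpha-n}$ does indeed remove the need for the sign analysis that the paper has to do for $R_n$ versus $R_1,\dots,R_{n-1}$.

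The one genuine difference is in how the boundedness lower bound $\|b\|_{\BMO_{\Delta_N}}\lesssim\|[b,\Delta_N^{-\alpha/2}]\|_{L^p\to L^q}$ is obtained: the paper gets it by citation, combining the transference inequality from \cite{DHLWY} with the two-weight commutator characterization from \cite{HRS}, whereas you propose to prove a direct analogue of Theorem \ref{t:evenLp} for $I_\alpha$ on $L^p_e(\Rn)$ (using $I_\alpha^*=I_\alpha$ and the $\sgn(b)$ test). Your route is more self-contained and, combined with the positivity of the kernel, is actually somewhat cleaner than the Riesz-transform case; the paper's route is shorter because the needed estimates exist in the literature. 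Both are valid. Your norm identity $2\|[b,\Delta_N^{-\alpha/2}]f\|_{L^q}^q=\|[b_{+,e},I_\alpha]f\|_{L^q}^q+\|[b_{-,e},I_\alpha]f\|_{L^q}^q$ on $L^p_e$ is correct (each $[b_{\pm,e},I_\alpha]f$ is even when $f$ is even, and the two pieces in the factorization are supported on disjoint half-spaces), though for transferring compactness it is the factorization itself, not the identity, that does the work, as you correctly use.
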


\begin{proof}
Since the proof is similar to that for Theorem \ref{t:compact-1}, we only mention certain steps and omit the details. We will recur to the classical Riesz potential, which is defined by
\[
\Delta^{-\alpha/2}f(x) :=\frac{1}{\Gamma(\alpha/2)} \int_{0}^{\infty} e^{- t \Delta}f(x) \frac{dt}{t^{1-\alpha/2}}
=C_{n,\alpha}\int_{\Rn} \frac{f(y)}{|x-y|^{n-\alpha}}dy.
\]
In \cite{W}, Wang proved that $b \in \VMO(\Rn)$ if and only if $[b,\Delta^{-\alpha/2}]$ is a compact operator from $L^p(\Rn)$ to $L^q(\Rn)$. Moreover, Theorem \ref{t:VMO} plays an important role in the connection between $\VMO_{\Delta_N}(\Rn)$ and $\VMO(\Rn)$.

As we have seen, in order to prove the compactness, we have to first establish the boundedness of $[b,\Delta_{N}^{-\alpha/2}]$. In fact, one can characterize $\BMO_{\Delta_N}(\Rn)$ via commutator:
\[
\|[b,\Delta_{N}^{-\alpha/2}]\|_{L^p(\Rn) \to L^q(\Rn)} \simeq ||b||_{\BMO_{\Delta_N}(\Rn)}.
\]
The upper bound was contained in \cite[Theorem 1.5]{LW}. Thus, it suffices to show the lower bound. Applying the same technique to get $(7.4)$ in \cite{DHLWY}, we deduce that
\begin{align*}
\|[b_{+,e},\Delta^{-\alpha/2}]\|_{L^p(\Rn) \to L^q(\Rn)}
+ \|[b_{-,e},\Delta^{-\alpha/2}]\|_{L^p(\Rn) \to L^q(\Rn)}
\lesssim \|[b,\Delta_{N}^{-\alpha/2}]\|_{L^p(\Rn) \to L^q(\Rn)}.
\end{align*}
We invoke a result
\[
\|[a,\Delta^{-\alpha/2}]\|_{L^p(\Rn) \to L^q(\Rn)} \simeq ||a||_{BMO(\Rn)}.
\]
It is contained in \cite{HRS}, in which the authors obtained the equivalence between the weighted $\BMO(\Rn)$ and the two-weight inequality for the commutator of Riesz potential. Combining these two inequalities, it yields that
\begin{align*}
||b||_{\BMO_{\Delta_N}(\Rn)} \simeq \|b_{+,e}\|_{\BMO(\Rn)} + \|b_{-,e}\|_{\BMO(\Rn)}
\lesssim \|[b,\Delta_{N}^{-\alpha/2}]\|_{L^p(\Rn) \to L^q(\Rn)}.
\end{align*}
The proof is concluded. 
\end{proof}

\section{$\BMO$ approximation}\label{s:homo}
Let $(X,d,\mu)$ be a space of homogeneous type in the sense of Coifman-Weiss. That is, $X$ is a topological space endowed with a Borel measure $\mu$ and a quasi-metric $d$, satisfying the following conditions: (a) $d(x,y)=d(y,x)$, (b) $d(x,y)>0$ if and only if $x\ne y$ and (c) there exists a constant $K$ such that $d(x,y)\le K[d(x,z)+d(z,y)]$ for all $x,y,z\in X$. (d) the balls $B(x,r)=\{y\in X;\,d(x,y)<r\}$ centered at $x$ and of radius $r>0$ form a basis of open neighborhoods of the point $x$ 
and, also, $\mu(B(x,r))>0$ whenever $r>0$. Furthermore, $\mu$ satisfies the doubling condition: there exists a positive 
constant $A$ such that $\mu(B(x,2r))\le A\,\mu(B(x,r))$.

The purpose of this section is to give an approximation for $\BMO(X)$ functions by the continuous functions with bounded supports as follows. We have seen an application of such approximation in Section \ref{s:dual}. We also believe that there will be more applications of it. 

\begin{proposition}\label{p:appr-BMO}
For any $f\in {\BMO}(X)$ there exists a sequence of bounded, continuous and boundedly supported $\{f_j\}_{j=1}^{\infty}$ such that 
\begin{align*}
&\|f_j\|_{{\BMO}}\le a_1 \|f\|_{{\BMO}},
\\
&|f_j(x)|\le a_2 \mathbb{M}f(x), \  \ x \in X, 
\\
&\lim_{j\to\infty}f_j(x)=f(x), \ \text{ a.e. } x \in X,
\end{align*}
where $a_1$ and $a_2$ are independent on $f$, and $\mathbb{M}$ is the restricted centered Hardy-Littlewood maximal function of $f$: 
\begin{align*}
\mathbb{M}f(x) 
=\sup_{0<r<1} \frac{1}{\mu(B(x,r))} \int_{B(x,r)}|f(y)|\,d\mu(y). 
\end{align*} 
\end{proposition}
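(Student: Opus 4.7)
The plan is to mimic the Euclidean construction sketched in \eqref{e:ggg}: truncate the values, cut off the support, and mollify to gain continuity. Given $f \in \BMO(X)$, fix $x_0 \in X$ and choose $N_j, \ell_j \to \infty$ and $r_j \to 0^+$ in $(0, 1)$. The candidate approximants are
\[
f_j(x) := \int_X K_{r_j}(x, y)\, [f(y)]_{N_j}\, \Phi_{\ell_j}(y)\, d\mu(y),
\]
where $[t]_N := \sgn(t)\min(|t|, N)$ is the height truncation, $\Phi_\ell$ is a bump of bounded support with small $\BMO$ norm (constructed below), and $K_r$ is a continuous averaging kernel at scale $r$ with $\int K_r(x, \cdot)\, d\mu = 1$, supported in $B(x, r)$ (in a H\"{o}lder-equivalent quasi-metric produced by Mac\'{\i}as--Segovia), and $K_r(x, y) \lesssim \mu(B(x, r))^{-1}$.

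The truncation is $1$-Lipschitz, so $\|[f]_N\|_{\BMO} \le 2\|f\|_{\BMO}$ by a standard double-integration argument, $|[f]_N| \le |f|$, and $[f]_{N_j} \to f$ a.e. The cut-off is built by generalizing Lemma \ref{lem:VMO-1}: take
\[
\Phi_\ell := \frac{1}{\ell} \sum_{k=1}^\ell \varphi_k,
\]
with $\varphi_k$ nonnegative, equal to $1$ on $B(x_0, c^k)$ and supported in $B(x_0, c^{k+1})$ for some $c > 1$. The claim $\|\Phi_\ell\|_{\BMO} \le C/\ell$, with $C$ depending only on the doubling constant, follows by splitting cases on the radius of a test ball $B$: only those $\varphi_k$ with $c^k$ comparable to the radius of $B$ oscillate nontrivially on $B$, and by doubling their number is bounded. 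Combining the product rule $\|gh\|_{\BMO} \lesssim \|g\|_{L^\infty}\|h\|_{\BMO} + \|g\|_{\BMO}\|h\|_{L^\infty}$ with the fact that averaging by $K_r$ is bounded on $\BMO$ (Jensen) gives
\[
\|f_j\|_{\BMO} \;\lesssim\; \|[f]_{N_j} \Phi_{\ell_j}\|_{\BMO} \;\lesssim\; N_j \|\Phi_{\ell_j}\|_{\BMO} + \|f\|_{\BMO} \;\lesssim\; \frac{N_j}{\ell_j} + \|f\|_{\BMO},
\]
so the choice $\ell_j := N_j^2$ yields $\|f_j\|_{\BMO} \le a_1 \|f\|_{\BMO}$ past a threshold $j_0 = j_0(f)$, at which point the sequence may be reindexed. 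The pointwise bound $|f_j(x)| \le a_2 \mathbb{M} f(x)$ follows from $|[f]_{N_j} \Phi_{\ell_j}| \le |f|$, the $L^\infty$ bound on $K_{r_j}$, and $r_j < 1$; continuity of $f_j$ follows from continuity of $x \mapsto K_{r_j}(x, \cdot)$ together with dominated convergence, and $f_j$ has bounded support since $\Phi_{\ell_j}$ does. Almost everywhere convergence is a consequence of Lebesgue differentiation in the doubling setting, noting that $[f]_{N_j} \Phi_{\ell_j} \to f$ a.e. as $j \to \infty$.

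The main obstacle will be Step 2: adapting Lemma \ref{lem:VMO-1} to the general doubling space. The Euclidean proof uses the crisp dyadic structure of intervals; in a general $(X, d, \mu)$ one must argue purely through the doubling condition, with careful tracking of constants in the case analysis. Passing to a H\"{o}lder-equivalent quasi-metric in order to make $K_r$ continuous is a standard but separate subtlety, and one should also verify that averaging by such a $K_r$ commutes well enough with the $\BMO$ product rule to preserve the desired constant.
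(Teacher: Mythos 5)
Your overall strategy (mollify to get continuity, truncate in height, cut off in space with a low-$\BMO$ bump, multiply) is exactly the paper's, and the auxiliary facts you invoke — $\|[f]_N\|_{\BMO}\le 2\|f\|_{\BMO}$, the product estimate $\|gh\|_{\BMO}\lesssim\|g\|_\infty\|h\|_{\BMO}+\|g\|_{\BMO}\|h\|_\infty$, and boundedness of ball averaging on $\BMO$ — are precisely the paper's Lemmas \ref{lem:Lemma2}, \ref{lem:Lemma3} and \ref{lem:approx-by-cptsuppBMO}. However, there is a genuine gap at the one place you yourself flagged as the hard step, and it is not merely a matter of ``careful tracking of constants.''

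The claim $\|\Phi_\ell\|_{\BMO}\le C/\ell$ for the cutoff $\Phi_\ell=\tfrac1\ell\sum_{k=1}^\ell\varphi_k$ built from balls $B(x_0,c^k)$ with \emph{geometrically growing radii} is false in a general doubling space. Your heuristic (``only the $\varphi_k$ with $c^k$ comparable to the radius of $B$ oscillate nontrivially, and by doubling there are boundedly many of them'') controls the boundary contributions, but it ignores that the mean oscillation of $\sum_k\mathbf{1}_{B_k}$ over a large ball $B$ also receives a contribution $\sim\sum_{B_k\subset B}\mu(B_k)/\mu(B)$ from the interiors, and this sum is bounded only when the measures $\mu(B_k)$ grow geometrically. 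Doubling guarantees that geometrically growing radii produce at most geometrically growing measures, but not at least: take $X=\R$, $d\mu=\frac{dx}{1+|x|}$ (a doubling measure with $\mu(B(0,r))\approx\log r$). There $\mu(B(0,c^k))\approx k$, and a direct computation on $B=B(0,c^\ell)$ gives $\MO(\Phi_\ell,B)\gtrsim 1$ uniformly in $\ell$. So your $\Phi_\ell$ does \emph{not} have vanishing $\BMO$ norm, and the ensuing bound $\|f_j\|_{\BMO}\lesssim N_j/\ell_j+\|f\|_{\BMO}$ collapses.

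The paper's fix is to select the radii adaptively by \emph{measure}, not by radius: it chooses $\lambda_k$ so that $A_0^k a_0\le\mu\big(B(x_0,K_0^{\lambda_k}S)\big)<A_0^{k+1}a_0$ (equation \eqref{eq:doubling0}), which forces the $\mu(B_k)$ to grow geometrically by fiat. With that choice, Lemma~\ref{lem:Lemma1} shows the \emph{unnormalized} sum $\sum_{k=0}^j\mathbf{1}_{B(x_0,K_0^{\lambda_k}S)}$ has $\BMO$ norm bounded by a constant $c_0$ independent of $j$ (the geometric growth of the $\mu(B_k)$ makes the interior sums telescope in Cases 3 and 4), and dividing by $j+1$ then gives $\BMO$ norm $\lesssim 1/j$. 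This is the key structural adaptation to the non-Euclidean setting, and it is what your proposal is missing: your construction is correct in $\R^n$ (and in Ahlfors-regular spaces, where radius scale and measure scale coincide), but not in a general doubling space. Two smaller omissions: you do not treat $\mu(X)<\infty$ separately (the $\lambda_k$ sequence need not exist there, though the cutoff is then unnecessary since $X$ sits inside a fixed ball), and you should be aware that the replacement quasi-metric of Mac\'{\i}as--Segovia is used by the paper only to make the plain ball average $f_\varepsilon(x)=\fint_{B(x,\varepsilon)}f\,d\mu$ continuous, so the separate mollifying kernel $K_r$ you introduce is unnecessary.
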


\begin{remark}
If $(X,d,\mu)$ is complete as a quasi-metric space, the closure of any ball is compact, because of its total boundedness, which can be seen by using Theorem (3.1) and the claim (3.4) in \cite{CW}. Hence, the functions $f_j$ above are compactly supported. 
\end{remark}

We note the following: It is known in \cite{MacSeg} that there exist a quasi-distance $d'(x,y)$ on $X$, finite constants $a_3,\, a_4,\, a_5$ and $0<\alpha<1$, such that 
\begin{enumerate}
\item[\rm(i)] $a_3d(x,y)\le d'(x,y) \le a_4 d(x,y)$,
\item[\rm(ii)] for any $x,\, y,\, z\in X$ and $r>0$ 
\begin{equation*}
|d'(x,z)-d'(y,z)|\le a_5 r^{1-\alpha}(d'(x,y))^\alpha, \ 
\text{ provided } d'(x,z),\, d(y,z)<r.
\end{equation*}
\end{enumerate}
We set $B'(x,r)=\{y\in X; d'(x,y)<r\}$. Let $x\in X$ and $\varepsilon>0$. Using (ii) we see that for any $z\in X$ with $d'(x,z)<\varepsilon$ it holds $\lim\limits_{d'(y,x)\to 0} d'(y,z)<\varepsilon$, which implies $\mathbf{1}_{B'(y,\varepsilon)}\to \mathbf{1}_{B'(x,\varepsilon)}$ as $d'(x,y)$ tends to $0$. From this we see that $\mu(B'(x,\varepsilon))$ is continuous with respect to $x$ on $X$. 

We also easily check that 
\begin{equation*}
(2A')^{-1} f_{B'(x, a_3\varepsilon)}
\le f_{B(x,\varepsilon)} \le 2A' f_{B'(x,a_4\varepsilon)},
\end{equation*}
where $A'$ is a constant satisfying $\mu(B(x,a_4\varepsilon /a_3))\le A' \mu(B(x,\varepsilon))$, and $f_{B}=\fint_B f\, d\mu$.
Hence, to show Proposition \ref{p:appr-BMO}, we may assume that $d(x,y)$ satisfies the condition (ii) for $d'(x,y)$. 

To show Proposition \ref{p:appr-BMO}, we first note the following lemma. 
\begin{lemma}\label{lem:approx-by-cptsuppBMO}
Let $f\in {\BMO}(X)$. Then for any $\varepsilon>0$, we have 
\begin{equation*}
f_\varepsilon(x):=\frac1{\mu(B(x,\varepsilon))} \int_{B(x,\varepsilon)}f\, d\mu \in {\BMO}(X)\cap C(X),
\end{equation*} 
and $\|f_\varepsilon\|_{{\BMO}(X)}\le c_1 \|f\|_{{\BMO}(X)}$, where $c_1=c_1(A,K)$ depends only on $A$ and $K$. 
\end{lemma}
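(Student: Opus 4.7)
The plan is to establish the two claimed properties separately: first the continuity of $f_\varepsilon$ on $X$, and then the $\BMO$ bound with a constant $c_1=c_1(A,K)$.

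For continuity I would follow the same reasoning the paper used to show that $\mu(B'(x,\varepsilon))$ is continuous in $x$. Property~(ii) forces the symmetric difference $B(y,\varepsilon)\,\triangle\,B(y',\varepsilon)$ to shrink into a thin shell around the sphere $\{z:d(y,z)=\varepsilon\}$ as $d(y,y')\to 0$, so $\mu\bigl(B(y,\varepsilon)\,\triangle\,B(y',\varepsilon)\bigr)\to 0$. Decomposing
\begin{equation*}
f_\varepsilon(y)-f_\varepsilon(y')=\frac{1}{\mu(B(y,\varepsilon))}\int_{B(y,\varepsilon)\setminus B(y',\varepsilon)} f\,d\mu-\frac{1}{\mu(B(y',\varepsilon))}\int_{B(y',\varepsilon)\setminus B(y,\varepsilon)} f\,d\mu+\biggl(\frac{1}{\mu(B(y,\varepsilon))}-\frac{1}{\mu(B(y',\varepsilon))}\biggr)\int_{B(y,\varepsilon)\cap B(y',\varepsilon)} f\,d\mu,
\end{equation*}
the first two terms vanish by absolute continuity of the integral (note $f\in L^1_{\rm loc}$ since $\BMO(X)\subseteq L^1_{\rm loc}(X)$ via John--Nirenberg on spaces of homogeneous type), and the third vanishes by the continuity of $y\mapsto\mu(B(y,\varepsilon))$ mentioned just before the lemma.

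For the $\BMO$ bound I would fix an arbitrary ball $B=B(x_0,r)$ and split according to whether $\varepsilon\le r$ or $\varepsilon>r$. Setting $B^*=B(x_0,2K\max(r,\varepsilon))$ and $c=f_{B^*}$, the quasi-triangle inequality yields $B(y,\varepsilon)\subset B^*$ for every $y\in B$, hence
\begin{equation*}
|f_\varepsilon(y)-c|\le \frac{1}{\mu(B(y,\varepsilon))}\int_{B^*}\mathbf{1}_{B(y,\varepsilon)}(z)\,|f(z)-c|\,d\mu(z).
\end{equation*}
Averaging in $y\in B$ and swapping the order of integration by Fubini, the task reduces to estimating
\begin{equation*}
\Phi(z):=\frac{1}{\mu(B)}\int_B\frac{\mathbf{1}_{B(y,\varepsilon)}(z)}{\mu(B(y,\varepsilon))}\,d\mu(y), \quad z\in B^*.
\end{equation*}
Using the symmetry $\mathbf{1}_{B(y,\varepsilon)}(z)=\mathbf{1}_{B(z,\varepsilon)}(y)$ and the doubling equivalence $\mu(B(y,\varepsilon))\simeq \mu(B(z,\varepsilon))$ whenever $d(y,z)<\varepsilon$ (which follows from $B(z,\varepsilon)\subset B(y,2K\varepsilon)$ and doubling), one obtains $\Phi(z)\le C\,\mu(B\cap B(z,\varepsilon))/(\mu(B)\mu(B(z,\varepsilon)))$. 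When $\varepsilon\le r$, this gives $\Phi(z)\le C/\mu(B)$ and, combined with $\mu(B^*)\le C\mu(B)$, leads to the bound $C\fint_{B^*}|f-c|\,d\mu\le C\|f\|_{\BMO}$; when $\varepsilon>r$, the complementary estimate $\mu(B\cap B(z,\varepsilon))\le\mu(B)$ together with the doubling comparison $\mu(B(z,\varepsilon))\simeq\mu(B(x_0,\varepsilon))$ on $B^*$ yields the same bound.

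The main obstacle is controlling the non-uniform factor $1/\mu(B(y,\varepsilon))$ under Fubini; doubling furnishes the needed comparison for free because the indicator forces $d(y,z)<\varepsilon$ wherever the integrand is nonzero, and the case distinction by $r$ versus $\varepsilon$ is precisely what allows the resulting integral to collapse to $\fint_{B^*}|f-c|\,d\mu$. To conclude, I would replace $c$ by $(f_\varepsilon)_B$ at the cost of a factor~$2$ and take the supremum over $B$, obtaining $\|f_\varepsilon\|_{\BMO(X)}\le c_1(A,K)\|f\|_{\BMO(X)}$.
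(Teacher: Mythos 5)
Your proof is correct and mirrors the paper's argument: both rely on the quasi-triangle inequality to place $B(y,\varepsilon)$ inside a fixed dilate of $B$, a Fubini swap, and the doubling comparison $\mu(B(y,\varepsilon))\simeq\mu(B(z,\varepsilon))$ whenever $d(y,z)<\varepsilon$. The only cosmetic difference is that the paper treats the case $r\le\varepsilon$ by a direct pointwise estimate on $|f_\varepsilon(x)-c|$ (without Fubini) and reserves the Fubini computation for $\varepsilon<r$, whereas you run the Fubini computation uniformly and distinguish the two regimes only in the final bound on $\Phi(z)$; both routes yield the same constant up to doubling factors.
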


\begin{proof}
The continuity of $f_\varepsilon$ is easily checked, by using the condition (ii). Thus, it suffices to check that 
$\|f_\varepsilon\|_{{\BMO}(X)}\le c_1 \|f\|_{{\BMO}(X)}$. To this end, we calculate the oscillation of $f_\varepsilon$ on an arbitrary ball $B(z,r)$. (The following calculations are due to Professor Eiichi Nakai.)

(i) The case $0<r\le \varepsilon $. For $x\in B(z,r)$ we have $B(x,\varepsilon)\subset B(z,K(r+\varepsilon))$ and 
\begin{align*}
|f_\varepsilon(x)-f_{B(z,\,K(r+\varepsilon))}|
&=\biggl|\frac{1}{\mu(B(x,\varepsilon))}\int_{B(x,\varepsilon)}f(y)\,d\mu(y)
-f_{B(z,\,K(r+\varepsilon))}\biggr|
\\
&\le \frac{1}{\mu(B(x,\varepsilon))}\int_{B(x,\varepsilon)}
|f(y)-f_{B(z,\,K(r+\varepsilon))}|\,d\mu(y)
\\
&\le \frac{1}{\mu(B(x,\varepsilon))}\int_{B(z,K(r+\varepsilon))}
|f(y)-f_{B(z,\,K(r+\varepsilon))}|\,d\mu(y)
\\
&\le \frac{\mu(B(z,K(r+\varepsilon)))}{\mu(B(x,\varepsilon))}
\|f\|_{{\BMO(X)}}.
\end{align*}
Since it holds $d(x,y)\le K(r+K(r+\varepsilon))\le K(1+2K)\varepsilon$ for $x\in B(z,r)$ and $y\in B(z,K(r+\varepsilon))$, we get 
${\mu(B(z,K(r+\varepsilon)))}/{\mu(B(x,\varepsilon))} \le A^{(\log_2K(1+2K))+1}$. Then it yields that 
\begin{equation*}
\frac{1}{\mu(B(z,r))}\int_{B(z,r)} |f_\varepsilon(x)-f_{B(z,\,K(r+\varepsilon))}|\,d\mu(x)
\le A^{(\log_2K(1+2K))+1}\|f\|_{{\BMO(X)}},
\end{equation*}
which shows 
\begin{equation*}
\|f_\varepsilon\|_{{\BMO}(X)} \le 2A^{(\log_2K(1+2K))+1}\|f\|_{{\BMO}(X)}.
\end{equation*}

(ii) The case $0<\varepsilon <r$. We note $B(x,\varepsilon)\subset B(z,K(r+\varepsilon))$ for $x\in B(z,r)$. Set $c=f_{B(z,K(r+\varepsilon))}$. Then we get
\begin{align*}
&\frac{1}{\mu(B(z,r))}\int_{B(z,r)}|f_\varepsilon(x)-c|\,d\mu(x)
\\
&\le \frac{1}{\mu(B(z,r))}\int_{B(z,r)}\biggl(
\frac{1}{\mu(B(x,\varepsilon))}\int_{B(x,\varepsilon)}|f(y)-c|d\mu(y)\,d\mu(x)
\\
&=\frac{1}{\mu(B(z,r))}\int_{X}\biggl(\int_{B(z,K(r+\varepsilon))}
\frac{\chi_{B(x,\varepsilon)}(y)}{\mu(B(x,\varepsilon))}|f(y)-c|d\mu(y)\biggr)
\chi_{B(z,r)}(x)\,d\mu(x)
\\
&=\frac{1}{\mu(B(z,r))}\int_{B(z,K(r+\varepsilon))}\biggl(\int_{X}
\frac{\chi_{B(y,\varepsilon)}(x)}{\mu(B(x,\varepsilon))}\chi_{B(z,r)}(x)\,
d\mu(x)\biggr)|f(y)-c|d\mu(y)
\\
&\le \frac{1}{\mu(B(z,r))}\int_{B(z,K(r+\varepsilon))}\biggl(
\int_{B(y,\varepsilon)}\frac{1}{\mu(B(x,\varepsilon))}\,d\mu(x)
\biggr)|f(y)-c|d\mu(y).
\end{align*}
For $x\in B(y,\varepsilon)$ we have $B(y,\varepsilon)\subset B(x,2K\varepsilon)$, and so we have $\mu(B(y,\varepsilon))\le A^{(\log_22K))+1}\mu(B(x,\varepsilon))$. Hence by $B(z,K(r+\epsilon)) \subset B(z,2Kr)$ we get
\begin{align*}
&\frac{1}{\mu(B(z,r))} \int_{B(z,r)} |f_{\epsilon}(x)-c|\,d\mu(x)
\\
&\le A^{(\log_22K))+1}\frac{1}{\mu(B(z,r))} 
\int_{B(z,K(r+\varepsilon))} |f(y)-c|d\mu(y)
\\
&\le A^{2(\log_22K)+2}\|f\|_{{\BMO}(X)},
\end{align*}
which implies
\begin{equation*}
\|f_\varepsilon\|_{{\BMO}(X)}
\le 2A^{2(\log_22K)+2}\|f\|_{{\BMO}(X)}.
\end{equation*}
This completes the proof. 
\end{proof}

If $\mu(X)<\infty$, we know that for any $x_0\in X$, $X\subset B(x_0,R_0)$ for some $R_0>0$. By Lebesgue's differentiation 
theorem, we see that $\lim\limits_{\varepsilon\to0}f_\varepsilon (x)=f(x)$ for almost all $x\in X$. Hence, setting 
\begin{equation*}
f_j(x)= \frac{1}{\mu(B(x,1/j))} \int_{B(x,1/j)}f(y)\,d\mu(y),
\end{equation*} 
we see that $\{f_j\}$ satisfies the required condition for Proposition \ref{p:appr-BMO}, and so we have proved  Proposition  
\ref{p:appr-BMO} in this case. 

Next, we treat the case $\mu(X)=\infty$. Fix $x_0\in X$ and $S>0$ with $a_0=\mu(B(x_0,S))>0$. Let $K_0=2K$ and $A_0>1$ 
be such that $\mu(B(x_0,K_0 r))\le A_0\mu(B(x_0,r))$ for any $r>0$. For $k\in\N$, there exists a unique $\lambda_k\in\N$ such that 
\begin{equation}\label{eq:doubling0}
A_0^k a_0\le \mu(B(x_0,K_0^{\lambda_k}S))< A_0^{k+1}a_0.
\end{equation}
In fact, because of $\mu(X)=\infty$, there exists a unique $\lambda_k\in\N$ 
such that
\begin{equation*}
\mu(B(x_0,K_0^{\lambda_k-1}S))< A_0^{k}a_0 \le \mu(B(x_0,K_0^{\lambda_k}S)).
\end{equation*}
On the other hand, we have 
\begin{equation*}
\mu(B(x_0,K_0^{\lambda_k}S))\le A_0\mu(B(x_0,K_0^{\lambda_k-1}S)).
\end{equation*}
Hence we have
\begin{equation*}
A_0^k a_0\le \mu(B(x_0,K_0^{\lambda_k}S))< A_0^{k+1}a_0.
\end{equation*}
Note that $\lambda_1<\lambda_2<\dots$ and $\lim\limits_{k\to\infty}\lambda_k=\infty$. 

\begin{lemma}\label{lem:Lemma1}
Let $\mu(X)=\infty$, $K_0$ and $A_0$ be as above. Set
\begin{equation*}
f_j(x)=\sum_{k=0}^{j}\mathbf{1}_{B(x_0,K_0^{\lambda_k}S)}.
\end{equation*}
Then there exists $c_0>0$ such that $\|f_j\|_{{\BMO}(X)}\le c_0$, where $\lambda_{0}=0$ and $c_0=c_0(A,K)$ depends only on $A$ and $K$. 
\end{lemma}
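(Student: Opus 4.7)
The plan is to estimate the BMO seminorm of $f_j$ by bounding its mean oscillation on an arbitrary ball $B=B(z,r)$ uniformly in $j$. Using the pointwise decomposition $f_j=\sum_{k=0}^j \mathbf{1}_{B_k}$ together with the identity $\frac{1}{\mu(B)}\int_B|\mathbf{1}_{B_k}-(\mathbf{1}_{B_k})_B|\,d\mu=2p_k(1-p_k)$, where $p_k:=\mu(B\cap B_k)/\mu(B)$, the triangle inequality yields $\MO(f_j,B)\le\sum_{k=0}^j 2p_k(1-p_k)$. Only the indices $k$ for which $B$ \emph{straddles} $B_k$, i.e.\ $B\cap B_k\ne\emptyset$ and $B\not\subset B_k$, contribute. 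I will partition these straddlers into two classes based on whether $r\le K_0^{\lambda_k}S$ or $r>K_0^{\lambda_k}S$.

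First, I would treat the straddlers with $r\le K_0^{\lambda_k}S$. Two applications of the quasi-triangle inequality, together with $K_0=2K$ and the fact that $\lambda$ is strictly increasing on $\N$, yield the implication: if $B\cap B_k\ne\emptyset$ and $r\le K_0^{\lambda_k}S$, then $B\subset B(x_0,K_0^{\lambda_k+2}S)\subset B_{k+2}$. Hence once such a straddler $k$ occurs, no $k'\ge k+2$ can straddle, and this class contains at most two indices. Each one contributes at most $2p_k(1-p_k)\le 1/2$, so the total contribution from this class is at most $1$.

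Second, for a straddler with $r>K_0^{\lambda_k}S$, a similar quasi-triangle computation gives $B_k\subset B(z,K_0^2 r)$, whence the doubling property of $\mu$ forces $\mu(B_k)\le c(A,K)\,\mu(B)$. Applied to the largest such straddler $k_{\max}$, this yields the crucial lower bound $\mu(B)\gtrsim A_0^{k_{\max}}a_0$. Combining this with the upper bound $\mu(B_k)<A_0^{k+1}a_0$ from \eqref{eq:doubling0} gives $p_k\lesssim A_0^{k-k_{\max}+1}$ for every straddler $k$ in this second class, and summation over $k\le k_{\max}$ yields a geometric series bounded by a constant depending only on $A$ and $K$.

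Adding the two contributions and taking the supremum over $B$ produces the bound $\|f_j\|_{\BMO(X)}\le c_0(A,K)$ independently of $j$. The main obstacle, and the reason behind the specific choice $K_0=2K$ in the statement, is to ensure that the powers of $K$ accumulated by the repeated quasi-triangle applications are absorbed by the geometric growth $K_0^{\lambda_{k+1}}\ge K_0^{\lambda_k+1}$ and by the doubling factor $A_0$; I expect this bookkeeping, especially in the class $r>K_0^{\lambda_k}S$ where doubling is invoked on an enlargement of $B$ by the factor $K_0^2$, to be the delicate computational point.
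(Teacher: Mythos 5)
Your argument is correct, and it takes a genuinely different route from the paper's. The paper organizes the estimate by a fourfold case analysis on the \emph{position} of the test ball $B$ relative to the nested family $\{B_\ell\}$: either $B$ lies entirely in one annulus $B_\ell\setminus B_{\ell-1}$ (oscillation zero), straddles exactly one boundary while remaining inside two consecutive annuli (oscillation at most $1$, with a carefully chosen comparison constant $j-\ell+1$), or spans several annuli, in which case a telescoping computation gives $\int_B|f_j-(j-\ell+1)|\le\sum_{i=0}^{\ell-1}\mu(B_i)$ and the same quasi-triangle-plus-doubling step you use yields the crucial lower bound $\mu(B)\gtrsim A_0^{\ell-1}a_0$, turning the sum into a geometric series. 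You instead exploit the exact identity $\fint_B|\mathbf{1}_{E}-(\mathbf{1}_{E})_B|\,d\mu=2p(1-p)$, which together with the triangle inequality immediately isolates the ``straddling'' indices and replaces the positional case analysis with a cleaner dichotomy by \emph{scale}: when $r\le K_0^{\lambda_k}S$ the quasi-triangle inequality (using $K(1+K_0)\le K_0^2$ and $\lambda_{k+2}\ge\lambda_k+2$) forces $B\subset B_{k+2}$, so at most two such straddlers exist and contribute at most $1$ total; when $r>K_0^{\lambda_k}S$ one gets $B_{k_{\max}}\subset B(z,K_0^2 r)$, hence by doubling $\mu(B)\gtrsim A_0^{k_{\max}}a_0$, and with the upper bound $\mu(B_k)<A_0^{k+1}a_0$ from the defining inequality for $\lambda_k$ the contributions $p_k\lesssim A_0^{k+1-k_{\max}}$ sum geometrically. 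The two proofs rest on the same two ingredients (quasi-triangle bookkeeping absorbed by the choice $K_0=2K$, and the doubling lower bound on $\mu(B)$), but your indicator identity automatically selects the comparison constant and linearizes the telescoping sum, which the paper handles by explicit case distinctions; the gain is a more systematic and reusable scheme, at no loss of generality.
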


\begin{proof} 
Let $B:=B(x_1,R)$ be an arbitrary ball in $X$. For simplicity, write $B_{-1}=\emptyset$ and $B_{\ell} := B(x_0, K_0^{\lambda_\ell}S)$ for each $\ell \in \N$.  Our arguments depend upon the spatial position of $B$.\\ 
\medskip\noindent
{\bf Case 1:} $B \subset B_0$, or $B \subset B_j^c$, or $B \subset B_{\ell} \setminus B_{\ell-1}$, $\ell=1,2,\dots,j$.  
In this case, $f_j(x)=\text{constant}$ on $B$, and hence 
\begin{align*}
{\MO}(f_j,B)=\fint_{B} \bigg|f_j(x) - \frac{1}{\mu(B)} \int_{B} f_j d\mu \bigg|d\mu(x)=0.
\end{align*}
\medskip\noindent
{\bf Case 2:} $B \subset B{_\ell} \setminus B_{\ell-2}$,  $B \cap (B_{\ell} \setminus B_{\ell-1}) \neq \emptyset$ and 
$B \cap B_{\ell-1} \neq \emptyset$,  $\ell=1,\dots,j$. In this case we have $\MO(f_j,B) \le 1$. Indeed, if $\mu(B \cap (B_{\ell} \setminus B_{\ell-1})) \ge \mu(B \cap B_{\ell-1})$, then we have 
\begin{equation*}
\int_{B}|f_j(x)-(j-\ell+1)|\,d\mu(x) = \mu(B \cap B_{\ell-1}),
\end{equation*}
and so 
\begin{equation*}
\MO(f_j,B) \le 2\times \frac{1}{2}=1.
\end{equation*}
Otherwise, it holds  
\begin{equation*}
\int_{B} |f_j(x)-(j-\ell+2)|\,d\mu(x) =\mu(B \cap (B_{\ell} \setminus B_{\ell-1})),
\end{equation*}
and we have the same estimate. 

\medskip\noindent
{\bf Case 3:} $B \subset B_{\ell}$, $B \cap (B_{\ell} \setminus B_{\ell-1}) \ne \emptyset$ and $B \cap B_{\ell-2} \ne \emptyset$, $\ell=2,\dots,j$. Taking $y_1\in B\cap(B_{\ell} \setminus B_{\ell-1})$ and $y_0 \in B \cap B_{\ell-2}$, one has 
\begin{equation*}
d(x_0,y_1) \le K(d(x_0,y_0)+d(y_0,y_1)) < K(K_0^{\lambda_{\ell-2}}S+d(y_0,y_1)),
\end{equation*}
from which we get 
\begin{equation*}
d(y_0,y_1)>\frac{d(x_0,y_1)}{K}-K_0^{\lambda_{\ell-2}}S.
\end{equation*}
On the other hand, we have $d(x_0,y_1)>K_0^{\lambda_{\ell-1}}S$ and 
\begin{equation*}
d(y_0,y_1)\le K(d(y_0,x_1)+d(x_1,y_1))<2KR. 
\end{equation*}
Hence, we obtain  
\begin{equation*}
2KR>\frac{K_0^{\lambda_{\ell-1}}S}{K}-K_0^{\lambda_{\ell-2}}S, 
\end{equation*}
and so,
\begin{align*}
R & > \frac{1}{2K} \Bigl(\frac{K_0^{\lambda_{\ell-1}}S}{K}
-K_0^{\lambda_{\ell-2}}S \Bigr)
\\
&>\frac{1}{2K} \Bigl(\frac{K_0^{\lambda_{\ell-1}}S}{K}
-K_0^{\lambda_{\ell-1}-1}S \Bigr)
\\
&=K_0^{\lambda_{\ell-1}}\frac{K_0-K}{2K^2K_0}S
=\frac{K_0^{\lambda_{\ell-1}}S}{2KK_0}
=K_0^{\lambda_{\ell-1}-2}S.
\end{align*}
Now, if $y\in B(x_0,K_0^{\lambda_{\ell-1}}S))$, then there holds 
\begin{align*}
d(x_1,y)&\le K(d(x_1,y_0)+d(y_0,y)) < K(R+K(d(y_0,x_0)+d(x_0,y)))
\\
&< K(R+K(K_0^{\lambda_{\ell-2}}S+K_0^{\lambda_{\ell-1}}S))
<K(R+2KK_0^{\lambda_{\ell-1}}S)
\\
&< K(R+2K\cdot K_0^2 R) = K(1+K_0^3)R, 
\end{align*}
which implies that 
\begin{align*}
A_0^{\ell-1}a_0 \le \mu(B(x_0,K_0^{\lambda_{\ell-1}}S)) 
\le \mu(B(x_1,K(1+K_0^3)R)) 
\le A^{(\log_2K(1+K_0^3))+1}\mu(B(x_1,R)),
\end{align*}
that is,  $\mu(B(x_1,R))\ge CA_0^{\ell-1}a_0$. Consequently, we have
\begin{align*}
&\frac{1}{\mu(B)} \int_{B} |f_j(x)-(j-\ell+1)|\,d\mu(x)
\\
&\le\frac{1}{\mu(B)} \sum_{i=0}^{\ell-1} \mu(B_i) 
\le \frac{1}{CA_0^{\ell-1}a_0} \sum_{i=0}^{\ell-1} A_0^{i+1} a_0 
<\frac{A_0^2}{C(A_0-1)}.
\end{align*}
This gives that 
\begin{equation*}
\MO(f_j,B) \le \frac{2A_0^2}{C(A_0-1)}.
\end{equation*}

\medskip\noindent
{\bf Case 4:} $B \subset B_{j+k}$, $B \cap (B_{j+k} \setminus B_{j+k-1}) \ne \emptyset$ and $B \cap B_{j-1} \ne \emptyset$, $k=2,\dots$. In this case, we have $\mu(B) \ge CA_0^{j+k-1}a_0$ as in Case 3, and hence
\begin{align*}
\fint_{B} |f_j(x)|\,d\mu(x)
&\le\frac{1}{\mu(B)} \sum_{i=0}^j \mu(B_i)
\le \frac{1}{CA_0^{j+k-1}a_0} \sum_{i=0}^j A_0^{i+1}a_0 
<\frac{A_0^{3-k}}{C(A_0-1)}.
\end{align*}
Therefore, it yields that 
\begin{equation*}
\MO(f_j,B)\le\frac{2A_0^{3-k}}{C(A_0-1)}.
\end{equation*}
Since $A_0>1$, we have completed the proof of our Lemma. 
\end{proof}

\begin{lemma}\label{lem:Lemma2}
Let $f\in {\BMO}(X)$. For $N>0$ define $[f]_N$ by 
\begin{equation*}
[f]_N(x)=\begin{cases}
f(x), &|f(x)|\le N,\\
N\frac{f(x)}{|f(x)|}, &|f(x)|> N.
\end{cases}
\end{equation*}
Then it holds $\|[f]_N\|_{{\BMO}(X)}\le 2\|f\|_{{\BMO}(X)}$. 
\end{lemma}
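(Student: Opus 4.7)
The plan is to reduce everything to the elementary observation that the truncation map is $1$-Lipschitz. Define $\phi\colon \C\to\C$ by $\phi(z)=z$ if $|z|\le N$ and $\phi(z)=Nz/|z|$ if $|z|>N$, so that $[f]_N=\phi\circ f$. Since $\phi$ is the nearest-point projection onto the closed convex disk $\{|z|\le N\}$, it is nonexpansive; equivalently, one checks directly in the three cases ($|z|,|w|\le N$; both $>N$; one of each) that
\begin{equation*}
|\phi(z)-\phi(w)|\le |z-w|, \qquad z,w\in\C.
\end{equation*}
In particular,
\begin{equation*}
|[f]_N(x)-[f]_N(y)|\le |f(x)-f(y)|, \qquad x,y\in X.
\end{equation*}

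With the Lipschitz estimate in hand, the $\BMO$ bound is a direct two-variable averaging. For any ball $B\subseteq X$, writing $([f]_N)_B=\fint_B [f]_N(y)\,d\mu(y)$ and applying Jensen's inequality,
\begin{equation*}
\bigl|[f]_N(x)-([f]_N)_B\bigr|\le \fint_B \bigl|[f]_N(x)-[f]_N(y)\bigr|\,d\mu(y)\le \fint_B |f(x)-f(y)|\,d\mu(y).
\end{equation*}
Averaging in $x$ over $B$ and using $|f(x)-f(y)|\le |f(x)-f_B|+|f(y)-f_B|$ gives
\begin{equation*}
\fint_B \bigl|[f]_N(x)-([f]_N)_B\bigr|\,d\mu(x)\le 2\fint_B |f(x)-f_B|\,d\mu(x)\le 2\|f\|_{\BMO(X)}.
\end{equation*}
Taking the supremum over balls $B$ yields $\|[f]_N\|_{\BMO(X)}\le 2\|f\|_{\BMO(X)}$, as required.

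There is essentially no obstacle here: the only nontrivial point is the $1$-Lipschitz property of $\phi$, which is a routine verification (or one sentence invoking that projections onto closed convex subsets of Hilbert space are nonexpansive). Everything else is the standard trick of replacing $f_B$ by a double average to convert a Lipschitz estimate on $f$ into a $\BMO$ estimate on $\phi\circ f$, with the factor $2$ coming from the triangle inequality $|f(x)-f(y)|\le |f(x)-f_B|+|f(y)-f_B|$.
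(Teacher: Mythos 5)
Your proof is correct and self-contained. The paper does not give its own argument for this lemma---it simply cites \cite[p.~206]{Yabuta}---so there is nothing to compare against directly; your route (truncation as the nearest-point projection onto a closed convex disk, hence $1$-Lipschitz, combined with the standard double-averaging trick to convert a pointwise Lipschitz bound into a mean-oscillation bound with the factor $2$) is the usual and cleanest way to obtain this estimate, and the constant $2$ is exactly what the lemma asserts.

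One small stylistic note: the step you label Jensen's inequality is simply the triangle inequality for vector-valued integrals applied to $[f]_N(x)-([f]_N)_B=\fint_B\bigl([f]_N(x)-[f]_N(y)\bigr)\,d\mu(y)$; invoking Jensen is harmless but not needed.
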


For a proof, see for example \cite[p.~206]{Yabuta}. 

\begin{lemma}\label{lem:Lemma3}
For $f,\,g\in {\BMO}(X)\cap L^\infty(X)$, we have 
\begin{equation*}
\|fg\|_{{\BMO}(X)}
\le 2(\|f\|_{{\BMO}(X)}\|g\|_{\infty} 
+\|f\|_{\infty}\|g\|_{{\BMO}(X)}). 
\end{equation*}
\end{lemma}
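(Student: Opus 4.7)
The plan is to use the standard product decomposition together with the elementary observation that for any $h \in L^1_{\text{loc}}(X)$, any ball $B$, and any constant $c$,
\begin{equation*}
\fint_B |h(x) - h_B|\, d\mu(x) \le 2 \fint_B |h(x) - c|\, d\mu(x),
\end{equation*}
which follows from the triangle inequality $|h - h_B| \le |h - c| + |c - h_B|$ and Jensen's inequality applied to $|c - h_B| = |(h - c)_B| \le (|h - c|)_B$. Thus it suffices to estimate $\fint_B |f(x)g(x) - f_B g_B|\, d\mu(x)$ for an arbitrary ball $B \subseteq X$.

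For the chosen constant $c = f_B g_B$, I would write the pointwise identity
\begin{equation*}
f(x)g(x) - f_B g_B = f(x)\bigl(g(x) - g_B\bigr) + g_B\bigl(f(x) - f_B\bigr),
\end{equation*}
and then integrate and apply the triangle inequality to obtain
\begin{equation*}
\fint_B |fg - f_B g_B|\, d\mu \le \|f\|_\infty \fint_B |g - g_B|\, d\mu + |g_B|\, \fint_B |f - f_B|\, d\mu \le \|f\|_\infty \|g\|_{\BMO(X)} + \|g\|_\infty \|f\|_{\BMO(X)},
\end{equation*}
where I used $|g_B| \le \|g\|_\infty$. Combining this with the factor-of-$2$ observation and taking the supremum over all balls $B$ yields the stated inequality.

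This proof is entirely elementary and has no real obstacle; the only subtle point is the choice to subtract $f_B g_B$ rather than $(fg)_B$, which must be compensated by the factor of $2$. Note that $fg \in L^1_{\text{loc}}(X)$ automatically from $f, g \in L^\infty(X)$, so there is no integrability issue, and the argument works uniformly on any space of homogeneous type since it uses only the averaging operation and basic norm inequalities.
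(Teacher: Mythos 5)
Your proposal is correct and takes essentially the same approach as the paper: both subtract the constant $f_B g_B$, invoke the factor-of-$2$ comparison with the true mean, and split the resulting difference into two terms controlled by $\|f\|_\infty\|g\|_{\BMO(X)}$ and $\|g\|_\infty\|f\|_{\BMO(X)}$ respectively. The only cosmetic difference is the choice of cross term (you add and subtract $f(x)g_B$, the paper adds and subtracts $f_B g(x)$), which is symmetric and immaterial.
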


\begin{proof}
Let $B$ be a ball in $X$. Then we deduce 
\begin{align*}
&\fint_B|f(x)g(x)-(fg)_B|d\mu(x) \leq 2\fint_B|f(x)g(x)-f_Bg_B|d\mu(x)
\\
&=2\fint_B|f(x)g(x)-f_Bg(x)+f_Bg(x)-f_Bg_B|d\mu(x)
\\
&\le 2\fint_B|f(x)-f_B||g(x)|d\mu(x) + 2\fint_B|f_B|\,|g(x)-g_B|d\mu(x)
\\
&\le 2\|f\|_{{\BMO}(X)}\|g\|_{\infty} 
+ 2\|f\|_{\infty}\|g\|_{{\BMO}(X)},
\end{align*}
from which we can deduce the desired estimate. 
\end{proof}

\begin{lemma}\label{lem:Lemma4}
Suppose $\mu(X)=\infty$. For any $f\in {\BMO}(X)$ there exists a sequence of bounded and boundedly supported $\{f_j\}$ such that 
\begin{align*}
&\|f_j\|_{{\BMO}(X)} \le 2 (2+c_0) \|f\|_{\BMO(X)},\\
&|f_j(x)|\le |f(x)|\ x \in X, \\  
& \lim_{j\to\infty}f_j(x)=f(x), \ x \in X,
\end{align*}
where $c_0$ is the constant defined in Lemma \ref{lem:Lemma1}. 
\end{lemma}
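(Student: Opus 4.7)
The plan is to build $f_j$ as the product of a value-truncation at height $N_j$ (to use Lemma \ref{lem:Lemma2}) and a smoothed indicator $\varphi_j$ of a large ball (to use Lemma \ref{lem:Lemma1}), with $N_j$ chosen so that the two terms in the Leibniz-type BMO estimate of Lemma \ref{lem:Lemma3} balance against each other. First I would set
\[
\varphi_j(x):=\frac{1}{j+1}\sum_{k=0}^{j}\mathbf{1}_{B(x_0,K_0^{\lambda_k}S)}(x),
\]
so that $0\le\varphi_j\le 1$, $\supp\varphi_j\subseteq B(x_0,K_0^{\lambda_j}S)$ (bounded), and, by Lemma \ref{lem:Lemma1}, $\|\varphi_j\|_{\BMO(X)}\le c_0/(j+1)$. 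A direct check shows $\varphi_j(x)\to 1$ for every $x\in X$: since the radii $K_0^{\lambda_k}S$ tend to infinity, for each $x$ there is a smallest index $k_0=k_0(x)$ with $x\in B(x_0,K_0^{\lambda_{k_0}}S)$, and then for $j\ge k_0$, $\varphi_j(x)=(j-k_0+1)/(j+1)\to 1$.

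Assuming $\|f\|_{\BMO(X)}>0$ (the degenerate case being best handled by viewing $\BMO(X)$ modulo constants), I would then put $N_j:=j\|f\|_{\BMO(X)}$ and define
\[
f_j(x):=[f]_{N_j}(x)\,\varphi_j(x).
\]
With this definition the three ``easy'' properties are immediate. The pointwise domination $|f_j(x)|\le|[f]_{N_j}(x)|\le|f(x)|$ follows from $|\varphi_j|\le 1$ and the definition of $[\,\cdot\,]_{N_j}$; $f_j$ has bounded support (inside $B(x_0,K_0^{\lambda_j}S)$) and is bounded ($\|f_j\|_\infty\le N_j$); and pointwise convergence $f_j(x)\to f(x)$ follows by combining $[f]_{N_j}(x)\to f(x)$ (since $N_j\to\infty$) with $\varphi_j(x)\to 1$.

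The BMO bound is where the choice of $N_j$ pays off. Applying Lemma \ref{lem:Lemma3} and then Lemma \ref{lem:Lemma2},
\begin{align*}
\|f_j\|_{\BMO(X)}
&\le 2\bigl(\|[f]_{N_j}\|_{\BMO(X)}\|\varphi_j\|_\infty+\|[f]_{N_j}\|_\infty\|\varphi_j\|_{\BMO(X)}\bigr) \\
&\le 2\Bigl(2\|f\|_{\BMO(X)}\cdot 1+N_j\cdot\tfrac{c_0}{j+1}\Bigr) \\
&\le 2(2+c_0)\|f\|_{\BMO(X)},
\end{align*}
exactly as required. The main obstacle, and essentially the only delicate point, is the scaling: if $N_j$ grows too slowly, the truncation $[f]_{N_j}$ will not eventually equal $f$ pointwise on all of $X$; if $N_j$ grows faster than linearly in $j$, the second term $N_j\,c_0/(j+1)$ blows up and the BMO estimate fails. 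The linear choice $N_j=j\|f\|_{\BMO(X)}$ is the unique scaling that keeps both ingredients on the same order, which is what makes the stated constant $2(2+c_0)\|f\|_{\BMO(X)}$ achievable.
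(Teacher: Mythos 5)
Your proof is correct and follows essentially the same strategy as the paper's: truncate $f$ at height $\sim j$, multiply by the averaged indicator sum from Lemma~\ref{lem:Lemma1}, and use Lemmas~\ref{lem:Lemma2} and~\ref{lem:Lemma3} to balance the two terms in the resulting BMO estimate. The only genuine deviation is in the choice of cutoff: the paper replaces the base radius $S$ by $j$, defining $g_j(x)=\frac{1}{j+1}\sum_{k=0}^{j}\mathbf{1}_{B(x_0,K_0^{\lambda_k}j)}(x)$ so that $g_j\equiv 1$ on $B(x_0,j)$ and the pointwise convergence $g_j(x)\to 1$ is immediate (eventually exact), at the price of implicitly re-running the $\lambda_k$-construction with $S=j$ for each $j$. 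You instead keep $S$ fixed and apply Lemma~\ref{lem:Lemma1} literally as stated, so $\varphi_j(x)\to 1$ only in the limit; this is slightly cleaner conceptually, and your convergence argument for $\varphi_j$ is correct. Your normalization $N_j=j\,\|f\|_{\BMO(X)}$ is equivalent to the paper's reduction ``WLOG $\|f\|_{\BMO(X)}=1$, take $N_j=j$,'' and your parenthetical caveat about $\|f\|_{\BMO(X)}=0$ is the same degenerate case the paper sidesteps by the WLOG normalization, so no real gap exists in either version.
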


\begin{proof}
To show this, we may assume that $\|f\|_{\BMO(X)}=1$ without loss of generality. Fix $x_0\in X$, and let $K_0=2K$ be as in Lemma \ref{lem:Lemma1}. Set 
\begin{equation*}
g_j(x)=\frac{1}{j+1} \sum_{k=0}^{j} \mathbf{1}_{B(x_0,K_0^{\lambda_k} j)}, \text{ and }f_j=[f]_jg_j.
\end{equation*}
Then we see that $g_j(x)=1$ on $B(x_0,j)$, and hence 
\begin{equation*}
|f_j(x)|\le |f(x)| \ \ \text{and } \ \lim_{j\to\infty}f_j(x)=f(x), \ x \in X.
\end{equation*}
By Lemmas \ref{lem:Lemma3}, \ref{lem:Lemma2} and \ref{lem:Lemma1}, we have 
\begin{align*}
\|f_j\|_{{\BMO}(X)}
&\le 2(\|[f]_j\|_{{\BMO}(X)}\|g_j\|_{\infty}
+\|[f]_j\|_{\infty}\|g_j\|_{{\BMO}(X)}),
\\
&\le 2\Bigl(2\|f\|_{{\BMO}(X)}+j\,\frac{c_0}{j}\Bigr)
=2(2+c_0)\|f\|_{\BMO(X)}.
\end{align*}
This proves Lemma \ref{lem:Lemma4}. 
\end{proof}

Now, we proceed to show Proposition \ref{p:appr-BMO} in the case 
$\mu(X)=\infty$. Set
\begin{align*}
f_j := g_j  [h_j]_j  \ \text{ and } \ 
h_j(x) = \frac{1}{\mu(B(x,1/j))} \int_{B(x,1/j)}f(y)\,d\mu(y).
\end{align*}
Then we obtain 
\begin{align*}
&\|h_j\|_{{\BMO}(X)}\le c_0\|f\|_{{\BMO}(X)},\\
&|h_j(x)|\le \mathbb{M}f(x), \ x\in X,\\ 
&\lim_{j\to\infty}h_j(x)=f(x), \ \text{ a.e. } x \in X.
\end{align*}
So, by Lemma \ref{lem:Lemma4} we see that $f_j$ satisfies the desired condition in Proposition \ref{p:appr-BMO}. This completes the proof of Proposition \ref{p:appr-BMO}.  

\section*{Acknowledgements}
M. Cao would like to thank Professor Lixin Yan for his hospitality during working at Sun Yat-sen University where a part of this work was done, and thank Professor Dongyong Yang for helpful discussions. We also thank Professor Eiichi Nakai for useful discussions and several corrections in the last section.


\end{document}